\newtheorem{thrm}{Theorem}[section]
\newtheorem{prop}[thrm]{Proposition}
\newtheorem{lem}[thrm]{Lemma}
\newtheorem{cor}[thrm]{Corollary}
\theoremstyle{definition}
\newtheorem{defn}[thrm]{Definition}
\newtheorem{ex}[thrm]{Example}
\newtheorem{note}[thrm]{Note}
\theoremstyle{remark}
\newtheorem{remark}[thrm]{Remark}
\newtheorem{notation}[thrm]{Notation}
\numberwithin{equation}{section}
\newcommand{\tildeQ}{\widetilde{Q^\tau}}
\DeclareMathOperator{\Id}{Id}
\DeclareMathOperator{\SL}{SL}
\DeclareMathOperator{\Inn}{Inn}
\DeclareMathOperator{\Aut}{Aut}
\DeclareMathOperator{\GL}{GL}
\title[Cartan and Iwasawa Decompositions for $\SL_2(k)$]{Generalizations of the Cartan and Iwasawa Decompositions for $\SL_2(k)$}
\author{Amanda K. Sutherland}
\address{Department of Mathematics\\North Carolina State University \\ Raleigh, NC 27695}
\email{aksuther@nscu.edu}
\begin{document}

\maketitle
\begin{abstract}
The Cartan and Iwasawa decompositions of real reductive Lie groups play a fundamental role in the representation theory of the groups and their corresponding symmetric spaces. These decompositions are defined by an involution with a compact fixed-point group, called a Cartan involution. For an arbitrary involution, one can consider similar decompositions. We offer a generalization of the Cartan and Iwasawa decompositions for algebraic groups defined over an arbitrary field $k$ and a general involution. 

\end{abstract}

\section{Introduction}

The Cartan decomposition of a real reductive Lie group $G$ factors the group into $HQ$ where $H$ is maximal compact and $Q$ is the symmetric space with respect to the Cartan involution. The Cartan decomposition generalizes the polar decomposition or singular value decomposition of matrices. 
The Iwasawa decomposition of a real reductive Lie group factors the group into its analytical subgroups $HP$ where $H$ is maximal compact and $P$ is a minimal parabolic $\mathbb R$-subgroup. This decomposition results from combining the Cartan decomposition of a semisimple Lie algebra and the root space decomposition of its complexifciation. 
The Cartan and Iwasawa decomposition of real reductive Lie groups plays an important role in representation theory and in the structure of their corresponding real reductive symmetric spaces. The reader is referred to Helgason \cite{helgason} for a more complete description of these decompositions. In \cite{helmwang}, 
a generalization of the notion of a Cartan involution is given and the Cartan and Iwasawa decompositions are generalized to the groups with such an involution.

In this paper, we let $G=\SL_2(\bar k)$ be an algebraic group defined over a field $k$ of characteristic not $2$ and develop a decomposition which resembles a combination of the Cartan and Iwasawa decomposition. This decomposition plays a role in the study of the generalized symmetric spaces of algebraic groups. We extend the notion of the Cartan and Iwasawa decompositions to $G$ defined over other fields. Specifically, we consider the real, rational, and $\frak{p}$-adic numbers, as well as the finite fields. We also generalize the factors of the decompositions by defining them with respect to any involution of the group. 

In section \ref{preliminaries}, we review results and notation needed to prove our main results. In section \ref{extending}, we show $\SL_2(k)$ can be factored to $H_k^\tau \tildeQ U_k$ where $H^\tau$ is the fixed-point group of some involution $\tau$, $\tildeQ=\{g\in G \ \vert \ \tau(g)=g^{-1}\}$ is the extended symmetric space of the involution $\tau$, and $U$ a unipotent subgroup of $G$. In section \ref{structure}, we discuss the structure of the symmetric and extended symmetric spaces. In section \ref{refining}, we analyze our decomposition of $\SL_2(k)$ in more detail and refine it for specific fields and involutions.
In section \ref{char2}, we summarize our results for fields of characteristic $2$. 


\section{Preliminaries}\label{preliminaries}

We borrow most notation from Springer and Borel \cite{springer, boreltits, boreltits2, borel}.

\begin{notation}

Let $k$ be a field of characteristic not equal to $2$ and $\bar k$ the algebraic closure of $k$. We will use $G=\SL_2(\bar k)$ and $G_k=\SL_2(k)$, the $k$-rational points of $G$. In general, for a group $A$ defined over $k$, $A_k$ will denote the $k$-rational points of $A$. 
\end{notation}
\subsection{Automorphisms of $G$}
For $B\in \GL_2(k)$, let $\Inn(B)$ denote the automorphism of $G$ defined by $\Inn(B)(X)=BXB^{-1}$ for all $X\in G$. Let $\Aut(G,G_k)$ denote the group of automorphisms of $G$ which keep $G_k$ invariant. We say $\phi,\theta\in \Aut(G,G_k)$ are isomorphic (over $k$) if there exists a third automorphism $\chi\in \Aut(G,G_k)$ such that $\chi\phi\chi^{-1}=\theta$. This is denoted $\phi\simeq \theta$ when the field $k$ is clear from context.

Combining results from \cite{borel} and \cite{helmwu}, we have the following Lemma. 

\begin{lem}
All automorphisms $\phi\in \Aut(G,G_k)$ are isomorphic over $k$ to $\Inn(A)$ for some $A\in\GL_2(k)$. 
\end{lem}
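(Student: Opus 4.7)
The strategy is to combine the cited fact that every algebraic automorphism of $G = \SL_2(\bar k)$ arises from conjugation by an element of $\GL_2(\bar k)$ with a short Galois descent argument to force the conjugating matrix to be $k$-rational; this actually establishes the stronger statement that $\phi$ itself equals $\Inn(A)$ for some $A \in \GL_2(k)$, from which the lemma follows.

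First I would apply the cited results from \cite{borel} and \cite{helmwu} to write $\phi = \Inn(B)$ for some $B \in \GL_2(\bar k)$. The goal then becomes showing that $B$ can be replaced, without changing the induced automorphism, by a $k$-rational matrix.

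Next I would exploit the hypothesis $\phi(G_k) = G_k$. For each $\sigma$ in the absolute Galois group $\Gamma = \text{Gal}(\bar k/k)$ and each $X \in \SL_2(k)$, the matrix $BXB^{-1}$ lies in $\SL_2(k)$ and is therefore fixed by $\sigma$; applying $\sigma$ yields $\sigma(B)\,X\,\sigma(B)^{-1} = BXB^{-1}$, so $B^{-1}\sigma(B)$ centralizes all of $\SL_2(k)$ inside $\GL_2(\bar k)$. The elementary unipotents $\begin{pmatrix}1&1\\0&1\end{pmatrix}$ and $\begin{pmatrix}1&0\\1&1\end{pmatrix}$ lie in $\SL_2(k)$ for every $k$, and a direct computation shows their joint centralizer in $\GL_2(\bar k)$ is precisely the group of scalar matrices. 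Hence $B^{-1}\sigma(B) = c_\sigma I$ for a unique $c_\sigma \in \bar k^\times$, and the cocycle identity $c_{\sigma\tau} = c_\sigma\,\sigma(c_\tau)$ is routine.

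Finally, Hilbert's Theorem~90 gives $c_\sigma = \sigma(\mu)\mu^{-1}$ for some $\mu \in \bar k^\times$, so that $A := \mu^{-1}B$ is $\Gamma$-fixed and thus lies in $\GL_2(k)$. Since $A$ and $B$ differ only by a scalar, $\Inn(A) = \Inn(B) = \phi$, as required. The only delicate step is the centralizer computation, but it is uniform in $k$ and in particular covers the finite-field case where $\SL_2(k)$ is not Zariski-dense in $G$; everything else is standard descent.
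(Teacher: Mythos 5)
The paper does not actually prove this lemma---it is stated as a consequence of results cited from \cite{borel} and \cite{helmwu}---so your self-contained argument is necessarily a different route, and an informative one: it gives the stronger conclusion that $\phi$ \emph{equals} $\Inn(A)$ for some $A\in\GL_2(k)$, not merely that it is isomorphic over $k$ to such an automorphism. The skeleton is right: innerness over $\bar k$ from the cited classification, the observation that $B^{-1}\sigma(B)$ centralizes the two elementary unipotents of $\SL_2(k)$ and is therefore scalar (this correctly sidesteps any Zariski-density issue over finite fields), the cocycle identity, and descent. One small simplification: Hilbert~90 is not needed, since after rescaling $B$ so that a chosen nonzero entry equals $1$, comparing that entry in $\sigma(B)=Bc_\sigma$ forces $c_\sigma=1$ for all $\sigma$, i.e. $B$ itself is Galois-fixed.

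The one genuine gap is the final inference ``$A$ is $\Gamma$-fixed and thus lies in $\GL_2(k)$.'' The fixed field of $\Gamma=\mathrm{Gal}(\bar k/k)$ acting on $\bar k$ is $k$ only when $k$ is perfect; the lemma is stated for an arbitrary field of characteristic not $2$, so imperfect fields such as $\mathbb F_p(t)$ are allowed, and there your descent only places the entries of $A$ in the perfect closure of $k$. The fix is elementary and bypasses Galois theory entirely: write $B=\left(\begin{smallmatrix} a & b\\ c & d\end{smallmatrix}\right)$ with $\delta=\det B$, and use that conjugation by $B$ sends $\left(\begin{smallmatrix}1&1\\0&1\end{smallmatrix}\right)$, $\left(\begin{smallmatrix}1&0\\1&1\end{smallmatrix}\right)$, and a couple of further explicit elements of $\SL_2(k)$ into $\SL_2(k)$; reading off entries shows that every product of two entries of $B$ divided by $\delta$ lies in $k$. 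Hence if, say, $a\neq 0$, the matrix $A=(a/\delta)B$ lies in $\GL_2(k)$ and $\Inn(A)=\Inn(B)=\phi$. With that replacement (or with the harmless additional hypothesis that $k$ is perfect, which covers every field the paper actually uses), your proof is complete and strictly sharper than the statement being proved.
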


\subsection{Square classes of $k$}For a field $k$, let $k^*$ denote the product group of non-zero elements from $k$ and $(k^*)^2$ the normal subgroup of squares in $k^*$ defined by $(k^*)^2=\{a^2 \ \vert \ a\in k^*\}$. The quotient group $k^*/(k^*)^2$ is the set of square classes in $k$. 

 From \cite{helmwu,helm} we borrow the following results about automorphisms of order $2$,  called involutions, of $G_k$. 

\begin{thrm}

All involutions $\tau\in \Aut(G,G_k)$ are isomorphic over $k$ to Inn$(B)$, where $B=\left(\begin{smallmatrix} 0 & 1 \\ b & 0\end{smallmatrix} \right)$ for some $b\in k^*$. 
\end{thrm}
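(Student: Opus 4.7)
The plan is to combine the preceding lemma with a rational-canonical-form argument. By that lemma, $\tau$ is isomorphic over $k$ to $\Inn(A)$ for some $A\in \GL_2(k)$, so I may assume outright that $\tau=\Inn(A)$. Since $\tau^2=\Inn(A^2)=\Id$ as automorphisms of $G=\SL_2(\bar k)$, the matrix $A^2$ must lie in the center of $\GL_2(\bar k)$, giving $A^2=\lambda I$ for some $\lambda\in k^*$; and $\tau\neq\Id$ forces $A$ to be non-scalar. Hence the minimal polynomial of $A$ is exactly $X^2-\lambda$, and comparing with the characteristic polynomial shows the trace of $A$ is $0$ and $\det(A)=-\lambda$.

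The goal then becomes to find $C\in \GL_2(k)$ and $\mu\in k^*$ with $CAC^{-1}=\mu B$, since $\Inn(X)=\Inn(Y)$ iff $X$ and $Y$ differ by a nonzero scalar. Equivalently, I want a $k$-basis of $k^2$ in which $A$ is represented by an anti-diagonal matrix. The key move is to pick any nonzero $v\in k^2$ that is not an eigenvector of $A$. Such a $v$ exists: the $k$-rational eigenvectors of $A$ span at most two one-dimensional subspaces of $k^2$, whereas $\mathbb{P}^1(k)$ has $|k|+1\geq 4$ points, because $\operatorname{char} k\neq 2$ forces $|k|\geq 3$. Then $v$ and $Av$ are $k$-linearly independent (otherwise $v$ would be an eigenvector), and $A(Av)=A^2v=\lambda v$, so in the basis $\{v,Av\}$ the operator $A$ is given by the companion matrix $\left(\begin{smallmatrix} 0 & \lambda \\ 1 & 0\end{smallmatrix}\right)$.

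Finally, letting $C\in \GL_2(k)$ be the change-of-basis matrix with columns $v$ and $Av$, I get
\[
C^{-1}AC=\begin{pmatrix} 0 & \lambda \\ 1 & 0\end{pmatrix}=\lambda\begin{pmatrix} 0 & 1 \\ 1/\lambda & 0\end{pmatrix},
\]
so with $b=1/\lambda\in k^*$ I obtain $\Inn(C^{-1})\,\tau\,\Inn(C)=\Inn(B)$ for $B$ of the desired shape. The main obstacle is the existence of a suitable $v$: over infinite $k$ this is trivial, while in the finite case it relies on the line-counting observation above, which is exactly where the hypothesis $\operatorname{char} k\neq 2$ enters. Everything else is a routine translation between $\Inn(\cdot)$ and similarity of matrices in $\GL_2(k)$ together with the fact that a non-scalar trace-zero matrix with $A^2=\lambda I$ has a cyclic vector.
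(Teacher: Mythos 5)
Your argument is correct. Note that the paper does not prove this statement at all: it is quoted from the cited references (Helminck--Wu, Helminck), so there is no in-paper proof to compare against. Your proposal supplies a clean self-contained argument along the standard lines of those references: reduce via the preceding lemma to $\tau=\Inn(A)$ with $A\in\GL_2(k)$, use $\tau^2=\Id$ and $\tau\neq\Id$ to get $A^2=\lambda I$ with $A$ non-scalar, and then conjugate $A$ over $\GL_2(k)$ into the companion matrix of $X^2-\lambda$, which is a scalar multiple of $\left(\begin{smallmatrix}0&1\\ b&0\end{smallmatrix}\right)$ with $b=1/\lambda$; since $\Inn$ kills scalars, this gives $\tau\simeq\Inn(B)$. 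All the individual steps check out: the centralizer of $\SL_2(\bar k)$ in $\GL_2(\bar k)$ is the scalars, $v$ and $Av$ are independent when $v$ is not an eigenvector, and the change of basis produces exactly $\left(\begin{smallmatrix}0&\lambda\\ 1&0\end{smallmatrix}\right)$. Two small remarks: the trace/determinant computation is decorative (only the degree-two minimal polynomial is used), and the existence of a non-eigenvector needs no point count in $\mathbb{P}^1(k)$ and no appeal to $\operatorname{char}k\neq 2$ --- if every nonzero vector of $k^2$ were an eigenvector, $A$ would already be scalar, so any non-eigenvector exists over every field. This makes your proof slightly more general than stated, and otherwise it is exactly the expected route.
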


\begin{thrm}

Let $M=\left(\begin{smallmatrix} 0 & 1 \\ m & 0 \end{smallmatrix}\right)$ and $N=\left(\begin{smallmatrix} 0 & 1 \\ n & 0 \end{smallmatrix}\right)$ be the matrices corresponding to  $\Inn(M),\Inn(N)\in\Aut(G,G_k)$, respectively. Then $\Inn(M)\simeq \Inn(N)$ if and only if $m$ and $n$ are in the same square class of $k$. 
\end{thrm}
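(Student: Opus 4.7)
\medskip

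\noindent\textbf{Proof plan.} The plan is to reduce the question to a linear-algebra statement about conjugacy of $2\times 2$ matrices modulo scalars: the involutions $\Inn(M)$ and $\Inn(N)$ are isomorphic precisely when $M$ and a scalar multiple of $N$ are conjugate in $\GL_2(k)$, and this happens exactly when their determinants agree modulo $(k^*)^2$.

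\medskip

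\noindent\textbf{Sufficiency.} Suppose $m=c^2n$ for some $c\in k^*$. The matrices $M$ and $cN$ satisfy $M^2=mI=(cN)^2$ and both have trace $0$, so they share the characteristic (and minimal) polynomial $x^2-m$. Neither is a scalar matrix, so by rational canonical form each is $\GL_2(k)$-conjugate to the companion matrix of $x^2-m$. Hence there is $P\in\GL_2(k)$ with $PMP^{-1}=cN$. Then $\Inn(P)\in\Aut(G,G_k)$ and
\[
\Inn(P)\,\Inn(M)\,\Inn(P)^{-1}=\Inn(PMP^{-1})=\Inn(cN)=\Inn(N),
\]
since the scalar $cI$ is central in $\GL_2(\bar k)$. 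Thus $\Inn(M)\simeq\Inn(N)$.

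\medskip

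\noindent\textbf{Necessity.} Suppose $\Inn(M)\simeq\Inn(N)$, so $\chi\,\Inn(M)\,\chi^{-1}=\Inn(N)$ for some $\chi\in\Aut(G,G_k)$. Invoking the preceding lemma, I would reduce to the case $\chi=\Inn(C)$ with $C\in\GL_2(k)$ (tracking the isomorphism that brings $\chi$ to inner form and conjugating both sides by it; this replaces $\Inn(M)$ and $\Inn(N)$ by involutions of the same form by the prior classification theorem, which is all that is needed). Then $\Inn(CMC^{-1})=\Inn(N)$. Since the kernel of $\Inn:\GL_2(\bar k)\to\Aut(G)$ consists of scalar matrices, there is $\lambda\in\bar k^{*}$ with $CMC^{-1}=\lambda N$; comparing entries of the two matrices in $\GL_2(k)$ forces $\lambda\in k^*$. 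Taking determinants yields
\[
-m=\det(M)=\det(CMC^{-1})=\lambda^2\det(N)=-\lambda^2 n,
\]
so $m/n=\lambda^2\in(k^*)^2$, i.e.\ $m$ and $n$ lie in the same square class.

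\medskip

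\noindent\textbf{Expected obstacle.} The delicate step is the reduction in the necessity direction from an arbitrary intertwining automorphism $\chi$ to an inner one given by $C\in\GL_2(k)$. The preceding lemma only guarantees $\chi\simeq\Inn(C)$, not equality, so one must verify that isomorphism of involutions is preserved when we conjugate $\chi$ into inner form — which works because $\simeq$ is an equivalence relation and the involutions transported through the same isomorphism remain of the form $\Inn(M'_{m'})$, $\Inn(M'_{n'})$ with $m',n'$ in the same square classes as $m,n$ (by the determinant invariant already established in the easier direction). Once this reduction is in place, the rest is the clean determinant computation above.
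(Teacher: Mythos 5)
The paper itself offers no proof of this theorem --- it is imported from \cite{helmwu,helm} --- so your argument stands on its own. Your sufficiency direction is correct and clean: when $m=c^2n$, the matrices $M$ and $cN$ are non-scalar, trace zero, share the characteristic (hence minimal) polynomial $x^2-m$, so they are $\GL_2(k)$-conjugate by rational canonical form, and $\Inn(cN)=\Inn(N)$ kills the scalar; $\Inn(P)$ with $P\in\GL_2(k)$ does lie in $\Aut(G,G_k)$.

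The gap is in the necessity direction, at exactly the step you flag. Lemma 2.1 as literally stated only gives $\chi\simeq\Inn(C)$, and your proposed repair is circular: after conjugating both sides by the automorphism $\psi$ that brings $\chi$ to inner form, you need the transported involutions $\psi^{-1}\Inn(M)\psi$ and $\psi^{-1}\Inn(N)\psi$ to be (isomorphic to) $\Inn\left(\begin{smallmatrix}0&1\\ m'&0\end{smallmatrix}\right)$, $\Inn\left(\begin{smallmatrix}0&1\\ n'&0\end{smallmatrix}\right)$ with $m'$, $n'$ in the same square classes as $m$, $n$ --- but ``isomorphic involutions have parameters in the same square class'' is precisely the necessity statement being proved; the sufficiency direction only gives the converse implication, and the ``determinant invariant'' is well defined (determinant modulo $(k^*)^2$) only once you know every relevant automorphism is represented by a matrix in $\GL_2(k)$, unique up to $k^*$-scalar. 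The input you actually need, and what \cite{helmwu} supplies, is the strong form of the lemma: every $\chi\in\Aut(G,G_k)$ \emph{equals} $\Inn(C)$ for some $C\in\GL_2(k)$, not merely is isomorphic to such (for $\SL_2$ all automorphisms of $G$ are inner, and $\Inn(C)$ with $C\in\GL_2(\bar k)$ preserving $G_k$ forces $C$ to be a scalar multiple of a matrix over $k$). With that in hand your computation closes the argument exactly as written: $CMC^{-1}=\lambda N$ with $\lambda=(CMC^{-1})_{12}\in k^*$, and determinants give $m/n=\lambda^2\in(k^*)^2$. So replace the circular workaround by citing or proving the strong form of the lemma; the rest of your proof is fine.
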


\begin{cor}\label{numberofinvs} The number of isomorphy classes of involutions of $G$ which keep $G_k$ invariant is $\vert k^*/(k^*)^2\vert$.
\end{cor}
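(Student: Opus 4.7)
The plan is to deduce the corollary directly from the two theorems that immediately precede it, by exhibiting a bijection between $k^*/(k^*)^2$ and the set of isomorphy classes of involutions in $\Aut(G, G_k)$.

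First I would define a map $\Phi : k^* \to \{\text{isomorphy classes of involutions in } \Aut(G,G_k)\}$ by sending $b \in k^*$ to the isomorphy class of $\Inn(B)$, where $B = \left(\begin{smallmatrix} 0 & 1 \\ b & 0\end{smallmatrix}\right)$. The first of the two preceding theorems asserts that every involution in $\Aut(G, G_k)$ is isomorphic over $k$ to some such $\Inn(B)$, so $\Phi$ is surjective onto the set of isomorphy classes. The second theorem says precisely that $\Phi(m) = \Phi(n)$ if and only if $m$ and $n$ lie in the same square class of $k$, i.e.\ $m (k^*)^2 = n (k^*)^2$. Therefore $\Phi$ factors through the quotient $k^*/(k^*)^2$ and induces a well-defined injection
\[
\overline{\Phi} : k^*/(k^*)^2 \longrightarrow \{\text{isomorphy classes of involutions in } \Aut(G, G_k)\},
\]
which is also surjective by the first theorem.

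Having $\overline{\Phi}$ as a bijection, the cardinality of the set of isomorphy classes equals $|k^*/(k^*)^2|$, which is exactly the claim of the corollary.

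Since both of the ingredient theorems are quoted from the existing literature and stated in the preceding subsection, there is no genuine obstacle here; the only step that requires any care is verifying that $\Phi$ is well-defined on square classes, which is handled verbatim by the second theorem. The proof is essentially a one-line deduction, so I would keep the write-up short.
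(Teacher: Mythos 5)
Your proposal is correct and matches the paper's intent: the corollary is stated as an immediate consequence of the two preceding theorems, and your bijection $\overline{\Phi}$ from $k^*/(k^*)^2$ to the isomorphy classes is exactly the implicit argument. No further comment is needed.
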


\begin{notation}
Let $m$ be a representative of the square class of $\overline m$ in $k^*/(k^*)^2$. We will use $\tau_m$ to denote the involution $\Inn(M)$ of $G$ with $M=\left(\begin{smallmatrix} 0 & 1 \\ m & 0\end{smallmatrix}\right)$.
\end{notation} 
\begin{remark}
For all involutions $\tau\in\Aut(G,G_k)$, we can assume $\tau\simeq \tau_m$ where $m\in k^*$ is the representative of the square class $\overline m$. For the class of squares, we use $\tau_1$.
\end{remark}

\subsection{Fixed-point group of an automorphism} Let $\mathcal G$ be a group and $\phi$ an automorphism of $\mathcal G$. Denote $H^\phi$ as the fixed-point group of $\phi$ in $\mathcal G$.

\begin{ex}
For  $G$ with the involution $\tau_m$, the fixed point group is $H^{\tau_m}$.
\[H_k^{\tau_m}=\left\{ \begin{pmatrix} a & b \\ mb & a \end{pmatrix}  \ \bigg \vert \ a,b\in k, \ a^2-mb^2=1\right\}\]

\end{ex}

\begin{defn} For a group $\mathcal G$ with the involution $\tau$, the \emph{symmetric space} is defined as $Q^\tau=\{g\tau(g)^{-1} \ \vert \ g \in \mathcal G\}$, and the \emph{extended symmetric space} is defined as $\tildeQ=\{g\in \mathcal G \ \vert \ \tau(g)=g^{-1}\}$.
\end{defn}

\begin{remark}\label{G/H}
For a group $\mathcal G$ with the involution $\tau$, the symmetric space is isomorphic to $\mathcal G/H^\tau$.

\end{remark}

\begin{ex}For $G$ with the involution $\tau_m$, the extended symmetric space is $\widetilde{Q^{\tau_m}}$.

\[\widetilde{Q^{\tau_m}}=\left\{\begin{pmatrix} a & b \\ -mb & c \end{pmatrix} \ \bigg\vert \ a,b,c\in k, \ ac+mb^2=1\right\}\]
\end{ex}

\begin{remark}
An element is $\tau$-split if it is sent to its inverse under $\tau$. A subset is $\tau$-split if all of its elements are $\tau$-split, i.e. $\tildeQ$ is $\tau$-split by definition. A torus is $(\tau,k)$-split if it is both $\tau$-split and $k$-split.
\end{remark}
\begin{lem} The symmetric space of a connected group is connected. Furthermore, $(\tildeQ)^\circ=Q^\tau$, where $(\tildeQ)^\circ$  denotes the connected component of the extended symmetric space containing the identity. 
\end{lem}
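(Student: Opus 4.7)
The statement has two parts---the connectedness of $Q^\tau$, and the identity $(\tildeQ)^\circ=Q^\tau$---which I would prove in sequence.

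Connectedness is immediate: $Q^\tau$ is the image of $\mathcal{G}$ under the regular map $\pi\colon g\mapsto g\tau(g)^{-1}$, or equivalently, by Remark \ref{G/H}, the quotient $\mathcal{G}/H^\tau$. The continuous image of a connected set is connected. Next, the inclusion $Q^\tau\subseteq \tildeQ$ follows from the direct check $\tau(g\tau(g)^{-1})=\tau(g)g^{-1}=(g\tau(g)^{-1})^{-1}$; since $Q^\tau$ is connected and contains $e=\pi(e)$, it lies in the identity component $(\tildeQ)^\circ$.

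For the reverse inclusion, my plan is to compare tangent spaces at the identity. Differentiating the defining relation $\tau(g)g=e$ of $\tildeQ$ gives $T_e\tildeQ=\{X\in\mathfrak{g}\mid d\tau(X)=-X\}$, the $(-1)$-eigenspace of $d\tau$. Since $\operatorname{char}k\neq 2$, the Lie algebra decomposes as $\mathfrak{g}=\mathfrak{h}^\tau\oplus\mathfrak{q}$ into the $\pm 1$-eigenspaces, so $\dim T_e\tildeQ=\dim\mathfrak{q}=\dim\mathcal{G}-\dim H^\tau=\dim Q^\tau$. Viewing $Q^\tau$ as the $\mathcal{G}$-orbit of $e$ under the twisted action $g\cdot x=gx\tau(g)^{-1}$, it is locally closed with the same dimension as the ambient irreducible component $(\tildeQ)^\circ$, hence open and dense there.

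The main obstacle is promoting openness to equality. In the $\SL_2$ setting of the paper, the extended symmetric space $\widetilde{Q^{\tau_m}}$ is cut out by the irreducible polynomial $ac+mb^2-1=0$ and is therefore itself an irreducible affine surface over $\bar{k}$, so $(\widetilde{Q^{\tau_m}})^\circ=\widetilde{Q^{\tau_m}}$ with no extra components to worry about. The proof can then be completed by an explicit parametrization showing $\pi$ surjects onto this surface, where algebraic closure is used to extract the square roots that appear in solving for a preimage $g\in\SL_2(\bar{k})$ of a given $\begin{pmatrix} a & b \\ -mb & c\end{pmatrix}\in\widetilde{Q^{\tau_m}}$.
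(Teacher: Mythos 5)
Your first half coincides with the paper's own argument: $Q^\tau$ is connected as the continuous image of the connected group $\mathcal G$ under $g\mapsto g\tau(g)^{-1}$, it contains $\Id$, and by Lemma \ref{QinQtilde} it sits inside $\tildeQ$, hence inside $(\tildeQ)^\circ$. Where you diverge is the reverse inclusion, and there your proposal is genuinely more complete than the paper's proof, which stops after these observations and simply asserts $Q^\tau=(\tildeQ)^\circ$ without explaining why the identity component is not strictly larger. Your route --- identify $T_e\tildeQ$ with the $(-1)$-eigenspace $\mathfrak q$ of $d\tau$, use $\mathfrak g=\mathfrak h\oplus\mathfrak q$ (char $k\neq 2$) and orbit--stabilizer to see that the twisted orbit $Q^\tau$ is locally closed of dimension $\dim\mathfrak q$, hence open and dense in the component through the identity, then observe that $ac+mb^2-1$ is irreducible so $\tildeQ$ is a single irreducible component, and finish by exhibiting an explicit preimage (extracting square roots over $\bar k$) for each point of the quadric --- supplies exactly the missing direction; note that the final parametrization is in effect Theorem \ref{kbartheorem} and Table \ref{kbar} of the paper, so nothing new needs to be invented there. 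Two caveats: for an arbitrary connected group the openness step alone does not finish, since an open dense orbit need not exhaust a connected component without a closedness statement (this is Richardson's theorem on the image of $g\mapsto g\tau(g)^{-1}$), so as written your argument proves the lemma for $G=\SL_2$ over $\bar k$ rather than in the stated generality --- which is still more than the paper's proof establishes, and is the case actually used; and once you prove surjectivity onto the irreducible quadric directly, the tangent-space/openness step is redundant for $\SL_2$, while over non-closed fields (e.g.\ $\mathbb Q_p$ with $\tau_p$, where the paper shows $Q^\tau\subsetneq\tildeQ$) the statement must be read in the Zariski topology over $\bar k$, as your argument implicitly does.
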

\begin{proof}
Consider a connected group $\mathcal G$ defined over $k$, a field with a topology. Then $Q^\tau$ is connected because it is the image of the continuous mapping defined by $g\mapsto g\tau(g)^{-1}$ for $g\in \mathcal G$. Since $(\Id)\tau(\Id)^{-1}=\Id$, the identity matrix is always contained in the symmetric space. Therefore, the symmetric space is the connected component of the extended symmetric space containing the identity, $Q^\tau=(\tildeQ)^{\circ}$.  
\end{proof}

\begin{defn}
An involution $\tau$ of a group is a \emph{generalized Cartan involution} if the fixed-point group of $\tau$ is $k$-anisotropic. 
\end{defn}

\begin{remark}

A subgroup $A$ is $k$-isotropic if it contains a $k$-split torus. Otherwise, $A$ is $k$-anisotropic.
\end{remark}

\begin{remark}
If $k=\mathbb R$, this is the regular Cartan involution. $\mathbb R$-anisotropic is equivalent to compact. By abuse of notation, we will refer to a generalized Cartan involution as a Cartan involution.

\end{remark}

\begin{ex}\label{CartanSL2}
Consider $G$ defined over $k=\mathbb R$. The square classes of $\mathbb R$ are represented by $\{ 1, -1\}$. Up to isomorphy over $\mathbb R$, there are two involutions of $G$ which keep $G_{\mathbb R}$ invariant, namely $\tau_1$ and $\tau_{-1}$. 

The fixed-point group of $\tau_{-1}$ is the special orthogonal group SO$(2)$ which is compact and hence $\tau_{-1}$ is a Cartan involution. The symmetric space of  $\tau_{-1}$ is the set of positive definite symmetric matrices, while the extended symmetric space is the set of symmetric matrices. 

For the involution $\tau_1$, the fixed-point group is the subgroup SO$(1,1)$.
\end{ex}

The following result gives us the Cartan decomposition.
\begin{thrm}\label{cartan}

Let $\mathcal G$ be a real semisimple Lie group and $\theta$ a Cartan involution of $\mathcal G$. Define $Q$ and $H$ to be the symmetric space and fixed-point group with respect to $\theta$, respectively, and $A$ a maximal $(\theta,k)$-split torus in $\mathcal G$. Then $\theta$ induces the following equivalent Cartan decompositions:

\[ \mathcal G=HQ=H^\circ Q = HAH = H^\circ A^ \circ\]

where $H^\circ$ denotes the connected component of $H$ containing the identity. 

\end{thrm}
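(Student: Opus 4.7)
I would proceed in three stages: first at the Lie algebra level, then globally via the exponential map, and finally refining by conjugation into a maximal abelian subspace.

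Stage one sets up the infinitesimal Cartan decomposition $\mathfrak{g}=\mathfrak{h}\oplus\mathfrak{q}$ into the $(\pm 1)$-eigenspaces of $d\theta$, where $\mathfrak{h}=\mathrm{Lie}(H)$ and $\mathfrak{q}$ is identified (via the exponential) with the tangent space to $Q$ at the identity. The hypothesis that $\theta$ is a Cartan involution, so $H$ is compact, is precisely what makes the symmetric form $\langle X,Y\rangle:=-\kappa(X,\theta Y)$, built from the Killing form $\kappa$, positive-definite on $\mathfrak{g}$. This positivity is the analytic engine of the whole argument.

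Stage two establishes $\mathcal G=HQ$ by showing that the polar map $\Phi\colon H\times\mathfrak{q}\to\mathcal G$, $(h,X)\mapsto h\exp(X)$, is a diffeomorphism. Non-singularity of $d\Phi$ at $(e,0)$ is immediate from $\mathfrak{g}=\mathfrak{h}\oplus\mathfrak{q}$; injectivity is the classical computation using positivity of $\langle\cdot,\cdot\rangle$ and compactness of $H$; and surjectivity follows from a properness argument on $\Phi$. The identification $\exp(\mathfrak{q})=Q$ uses that for $X\in\mathfrak{q}$ one has $\theta(\exp X)=\exp(-X)=\exp(X)^{-1}$, so $\exp(\mathfrak{q})\subseteq\tildeQ$; the preceding lemma $(\tildeQ)^\circ=Q^\tau$ together with connectedness of $\exp(\mathfrak{q})$ forces equality.

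For stage three, I would pick a maximal abelian subspace $\mathfrak{a}\subseteq\mathfrak{q}$; then $A=\exp(\mathfrak{a})$ is a maximal $(\theta,\mathbb R)$-split torus. The key assertion is that every $X\in\mathfrak{q}$ is $\mathrm{Ad}(h)$-conjugate into $\mathfrak{a}$ for some $h\in H$. Combined with $Q=\exp(\mathfrak{q})$ this gives $Q\subseteq HAH^{-1}=HAH$, and thus $\mathcal G=HQ=HAH$. The reductions $HQ=H^{\circ}Q$ and $HAH=H^{\circ}A^{\circ}$ follow from connectedness: $Q$ and $A^{\circ}$ are connected and $\mathcal G$ is connected, so any $H$-coset (respectively, any coset of $A$) that appears in the product is represented by an element of the identity component.

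The main obstacle is the $\mathrm{Ad}(H)$-conjugacy in stage three. I would handle it by fixing a regular $Y\in\mathfrak{a}$ and maximizing the continuous function $h\mapsto\langle\mathrm{Ad}(h)X,Y\rangle$ over the compact group $H$. The critical-point condition at a maximizer $h_{0}$, combined with $\theta$-invariance of $\langle\cdot,\cdot\rangle$ and ad-skew-symmetry, yields $[\mathrm{Ad}(h_{0})X,Y]=0$, and regularity of $Y$ together with maximality of $\mathfrak{a}$ then forces $\mathrm{Ad}(h_{0})X\in\mathfrak{a}$. It is precisely the compactness of $H$ — the Cartan hypothesis on $\theta$ — that allows this maximization to occur, and this is where the proof would fail for a non-Cartan involution.
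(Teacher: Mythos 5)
The paper does not prove this theorem at all: it is quoted as classical background (the Cartan decomposition, with Helgason and Helminck--Wang as the sources), so there is no internal proof to compare against. Your proposal is the standard textbook argument --- the eigenspace decomposition $\mathfrak g=\mathfrak h\oplus\mathfrak q$ of $d\theta$, positivity of $\langle X,Y\rangle=-\kappa(X,\theta Y)$, the polar map $H\times\mathfrak q\to\mathcal G$, and $\mathrm{Ad}(H)$-conjugation of $\mathfrak q$ into a maximal abelian $\mathfrak a$ by maximizing $h\mapsto\langle \mathrm{Ad}(h)X,Y\rangle$ over the compact group $H$ --- and it is sound in outline for $\mathcal G=HQ=HAH$.

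Two steps, however, need repair. First, the identification $\exp(\mathfrak q)=Q$: connectedness of $\exp(\mathfrak q)$ together with the paper's lemma $(\tildeQ)^\circ=Q^\tau$ only gives the inclusion $\exp(\mathfrak q)\subseteq Q^\tau$; it does not ``force equality,'' since a connected subset containing the identity need not exhaust the identity component. The reverse inclusion should instead be read off from the polar decomposition you have just established: writing $g=h\exp X$ with $h\in H$, $X\in\mathfrak q$, one gets $g\theta(g)^{-1}=h\exp(2X)h^{-1}=\exp\bigl(2\,\mathrm{Ad}(h)X\bigr)\in\exp(\mathfrak q)$, because $\mathrm{Ad}(h)$ preserves $\mathfrak q$. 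Second, the passage to identity components is not a formal consequence of connectedness as you state it. Taken literally, $\mathcal G=H^\circ A^\circ$ is false in the motivating example: for $\mathcal G=\SL_2(\mathbb R)$ with $\theta=\tau_{-1}$ one has $H^\circ=\mathrm{SO}(2)$ and $A^\circ$ the positive diagonal, and $g\in \mathrm{SO}(2)A^\circ$ forces $g^{\mathrm T}g$ to be diagonal, which fails for $g=\left(\begin{smallmatrix}1&1\\0&1\end{smallmatrix}\right)$; the intended reading (compare the paper's later citation of Helminck--Wang) is $H^\circ A H^\circ$, with a conjugating factor on both sides. What actually needs proving is that $H$ may be replaced by $H^\circ$, which follows either from the classical fact that the fixed-point group of a Cartan involution of a connected semisimple group with finite center is already connected, or by rerunning your polar and maximization arguments verbatim with the compact group $H^\circ$ in place of $H$; your remark about ``any coset being represented in the identity component'' does not bridge this, and the same caveat applies to $HQ=H^\circ Q$. (Minor: for surjectivity of the polar map via properness you also need nonsingularity of $d\Phi$ at every point, not just at $(e,0)$, which is the standard computation with $\operatorname{ad}X$ on $\mathfrak q$.) With these repairs the argument is the standard proof of the quoted theorem.
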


The following result gives us the Iwasawa decomposition.
\begin{thrm}\label{iwasawa}

Let $\mathcal G$ be a real semisimple Lie group and $\theta$ a Cartan involution of $ G$. Let $H$ be the fixed-point group and $P$ a minimal parabolic $\mathbb R$-subgroup. Then $\theta$ induces the Iwasawa decomposition:
\[ \mathcal G=HP\]
\end{thrm}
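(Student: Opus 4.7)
The plan is to prove the Iwasawa decomposition by combining the Cartan decomposition at the Lie algebra level with the restricted root space decomposition, following the classical Helgason \cite{helgason} route, and then identifying the resulting \emph{AN}-factor with a minimal parabolic.

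First I would pass to the Lie algebra $\mathfrak{g}$ of $\mathcal{G}$ and use the Cartan involution $\theta$ to write the Cartan decomposition $\mathfrak{g}=\mathfrak{h}\oplus\mathfrak{q}$ into $(+1)$- and $(-1)$-eigenspaces, where $\mathfrak{h}$ integrates to $H$ and $\mathfrak{q}$ corresponds infinitesimally to the symmetric space of Theorem \ref{cartan}. Choose a maximal abelian subspace $\mathfrak{a}\subseteq\mathfrak{q}$; this integrates to a maximal $(\theta,\mathbb{R})$-split torus $A$. The adjoint action of $\mathfrak{a}$ is simultaneously diagonalizable on $\mathfrak{g}$, yielding the restricted root space decomposition $\mathfrak{g}=\mathfrak{g}_0\oplus\bigoplus_{\alpha\in\Sigma}\mathfrak{g}_\alpha$, and after fixing a positivity on $\Sigma$ I would set $\mathfrak{n}=\bigoplus_{\alpha>0}\mathfrak{g}_\alpha$. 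The key linear-algebra step is then verifying the direct sum $\mathfrak{g}=\mathfrak{h}\oplus\mathfrak{a}\oplus\mathfrak{n}$: one checks the sum spans by using $X=\tfrac{1}{2}(X+\theta X)+\tfrac{1}{2}(X-\theta X)$ together with the fact that $\theta$ sends $\mathfrak{g}_\alpha$ to $\mathfrak{g}_{-\alpha}$, and checks the sum is direct by a dimension count against $\mathfrak{g}_0$.

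Next I would exponentiate. The subgroups $A=\exp(\mathfrak{a})$ and $N=\exp(\mathfrak{n})$ are closed (the second because $\mathfrak{n}$ is nilpotent and $\operatorname{ad}$-nilpotent roots integrate to a unipotent group), and I would show that the multiplication map $H\times A\times N\to\mathcal{G}$ is a diffeomorphism onto $\mathcal{G}$. Surjectivity is most cleanly obtained by invoking the Cartan decomposition $\mathcal{G}=HAH$ of Theorem \ref{cartan} and then a Bruhat/Gram--Schmidt-type argument within the $A$-factor; injectivity comes from the uniqueness of the Iwasawa sum $\mathfrak{g}=\mathfrak{h}\oplus\mathfrak{a}\oplus\mathfrak{n}$ together with the fact that $H\cap AN=\{e\}$.

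Finally, to convert $\mathcal{G}=HAN$ into $\mathcal{G}=HP$ I would identify $MAN$ with a minimal parabolic $\mathbb{R}$-subgroup $P$, where $M$ is the centralizer of $A$ in $H$. Since $M\subseteq H$, the equality $HAN=H(MAN)=HP$ is immediate, so the core issue is the identification of $MAN$ as parabolic. One checks that $MAN$ is the normalizer in $\mathcal{G}$ of $\mathfrak{n}$, hence is closed, self-normalizing and equal to its own connected component's normalizer; minimality follows from the maximality of $A$ as a $(\theta,\mathbb{R})$-split torus, since any smaller parabolic would yield a larger $\mathbb{R}$-split torus in its Levi. I expect this last identification, particularly the minimality of $MAN$ as a parabolic $\mathbb{R}$-subgroup, to be the main obstacle, as it requires the structure theory of parabolics over $\mathbb{R}$ rather than the more elementary Lie algebra decomposition that precedes it.
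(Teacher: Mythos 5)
The paper does not prove this statement at all: it is the classical Iwasawa decomposition, quoted as background with the reader referred to Helgason \cite{helgason}, so there is no internal proof to compare against. Your outline is essentially that standard Helgason/Knapp argument --- Lie algebra Cartan decomposition, restricted root spaces, $\mathfrak g=\mathfrak h\oplus\mathfrak a\oplus\mathfrak n$, exponentiation to a diffeomorphism $H\times A\times N\to\mathcal G$, and identification of $MAN$ (with $M=Z_H(A)$) as the minimal parabolic, whence $HP=H(MAN)=HAN$ --- and it is correct in structure. One caution: the surjectivity step as you phrase it is the weakest link. Knowing $\mathcal G=HAH$ from Theorem \ref{cartan} does not reduce $\mathcal G=HAN$ to an argument ``within the $A$-factor,'' since for $g=h_1ah_2$ one still has to show $ah_2\in HAN$, which is essentially the whole problem again; the Gram--Schmidt mechanism in the standard proof is instead applied directly to $\operatorname{Ad}(g)$ with respect to a $B_\theta$-orthonormal basis ordered compatibly with the restricted root space decomposition (or one argues via the adjoint group and lifts), so you should either follow that route or establish surjectivity by the openness/closedness of $HAN$ in $\mathcal G$ rather than by appeal to the $HAH$ decomposition.
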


\begin{remark}\label{iwasawaremark}
For a $k$-split group, as is the case with $G$, we can write $G_{\mathbb R}=H_{\mathbb R}A_{\mathbb R}U_{\mathbb R}$, where $A$ is the maximal $(\theta,k)$-split torus and $U$ a maximal unipotent subgroup defined over $k$. In fact, $P_{\mathbb R}=Z_{G_{\mathbb R}}(A)U_{\mathbb R}=A_{\mathbb R}U_{\mathbb R}$ since $A$ is a maximal torus.
\end{remark}

From \cite{helmwang}, we have a condition which equates the Iwasawa and Cartan decompositions. 
\begin{thrm}

Let $\mathcal G$ be a real Lie group defined over a field $k$ as in \cite{helmwang} and $\theta$ a generalized Cartan involution of $G$. If $(k^*)^2=(k^*)^4$, then the following decompositions are equivalent:
\[ \mathcal G_k=H_k^\circ A_k H_k^\circ=H_kA_kH_k=H_k^\circ Q=H_kQ=H_k^\circ A_k U_k=H_kA_kU_k\]
where $H,A,Q,$ and $U$ are as defined in Theorems \ref{cartan} and Remark \ref{iwasawaremark}.
\end{thrm}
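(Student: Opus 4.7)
The plan is to establish the seven-way equality in stages, anchored by the classical Cartan decomposition (Theorem \ref{cartan}) and the Iwasawa decomposition (Theorem \ref{iwasawa} with Remark \ref{iwasawaremark}). Each of the six factorizations sits inside $\mathcal G_k$, so the task reduces to showing that each factorization surjects onto $\mathcal G_k$ and verifying the mutual inclusions. The hypothesis $(k^*)^2 = (k^*)^4$ enters in two distinct places: collapsing the distinction between $H_k$ and its identity component $H_k^\circ$, and reconciling the ``Cartan-style'' factorizations (ending in $H_k$ or $Q$) with the ``Iwasawa-style'' factorization (ending in $U_k$).

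First I would handle the Cartan side. By Remark \ref{G/H}, $Q \cong \mathcal G/H^\theta$, so over the algebraic closure $\mathcal G = HQ$; since $\theta$ is a generalized Cartan involution and $H^\theta$ is $k$-anisotropic, this descends to $\mathcal G_k = H_k Q$. Every $\theta$-split element of $Q$ can be conjugated by an element of $H_k$ into the maximal $(\theta,k)$-split torus $A$, which gives $Q \subseteq H_k A_k H_k$ and hence $\mathcal G_k = H_k A_k H_k$. The identity-component versions follow by the same argument once the conjugating element can be arranged inside $H_k^\circ$; the obstruction is measured by $H_k/H_k^\circ$, which in this $\SL_2$-setting is indexed by $k^*/(k^*)^2$. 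The hypothesis $(k^*)^2 = (k^*)^4$ ensures that for any coset representative $h \in H_k$ of a nontrivial component, the obstructing square class is itself a fourth power, so $h$ can be absorbed into the $A_k$ factor modulo $H_k^\circ$, yielding both $\mathcal G_k = H_k^\circ Q$ and $\mathcal G_k = H_k^\circ A_k H_k^\circ$.

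Next I would connect the Cartan side to the Iwasawa side. Theorem \ref{iwasawa} combined with Remark \ref{iwasawaremark} gives $\mathcal G_k = H_k A_k U_k$ directly in the $k$-split case. The equality $H_k A_k H_k = H_k A_k U_k$ is then a statement comparing two complements to the parabolic $A_k U_k$ inside $\mathcal G_k$: both $H_k$ (from the Cartan decomposition) and $U_k$ (from the Iwasawa decomposition) provide such complements, and the two factorizations agree precisely when the corresponding coset representatives can be interchanged. The fourth-power hypothesis again makes this interchange possible by absorbing the discrepancy into the middle $A_k$ factor, and the identical argument with $H_k^\circ$ throughout yields $\mathcal G_k = H_k^\circ A_k U_k$.

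I expect this last step --- matching the Cartan and Iwasawa factorizations at the level of $k$-rational points --- to be the main obstacle. The delicacy is that unipotent endings and fixed-point endings interact with the square-class structure of $k$ in different ways, and the role of the condition $(k^*)^2=(k^*)^4$ is precisely to force these interactions to cancel. A careful case analysis parameterizing $\mathcal G_k / A_k U_k$ both by $H_k$-cosets and by $U_k$-orbits, tracking how the square classes of $k$ appear in each, should make this cancellation explicit; the fourth-power condition guarantees that the residual discrepancy lies in the image of the squaring map on $A_k$ and can therefore be swallowed into the torus factor.
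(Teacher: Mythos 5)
First, a point of comparison: the paper does not prove this theorem at all --- it is quoted from \cite{helmwang} as background, and the paper's own remark immediately afterwards stresses that the notion of generalized Cartan involution in \cite{helmwang} carries much stronger conditions than the one used in this paper, conditions which are precisely what ``guarantee the existence of (a generalization of) a Cartan and Iwasawa decomposition.'' Judged on its own terms, your sketch has genuine gaps at exactly the points that carry the content of the result. The central one is the sentence claiming that $\mathcal G = HQ$ over the algebraic closure ``descends'' to $\mathcal G_k = H_kQ$ because $H^\theta$ is $k$-anisotropic. That descent is (half of) the theorem, not a formality: for a general involution the inclusion $H^\tau_kQ^\tau \subset G_k$ is strict --- the opening example of Section \ref{extending} exhibits exactly this failure for $\SL_2(\mathbb R)$ --- and anisotropy of $H$ alone does not obviously repair it; one needs the full \cite{helmwang} axioms together with $(k^*)^2=(k^*)^4$. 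A real argument must show that $g\theta(g)^{-1}$ is semisimple, that it is $H_k$-conjugate (indeed $H_k^\circ$-conjugate, which is a nontrivial rationality statement about conjugacy of maximal $(\theta,k)$-split tori) into $A_k$, and that it admits a square root in $A_k$ rational over $k$; it is in extracting that rational square root that $(k^*)^2=(k^*)^4$ actually enters, not in the unexplained ``absorption into the $A_k$ factor'' you invoke. Likewise, your identification of $H_k/H_k^\circ$ with $k^*/(k^*)^2$ is specific to $\SL_2$, while the statement concerns the general class of groups treated in \cite{helmwang}.

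The Iwasawa half has the same defect. Theorem \ref{iwasawa} and Remark \ref{iwasawaremark} are statements about real semisimple Lie groups, so they cannot be cited ``directly'' to give $\mathcal G_k = H_kA_kU_k$ over an arbitrary field satisfying $(k^*)^2=(k^*)^4$; establishing an Iwasawa-type decomposition of the $k$-rational points is itself part of what must be proved. Your proposed bridge --- viewing $H_k$ and $U_k$ as two ``complements'' to $A_kU_k$ whose coset representatives can be interchanged, with the discrepancy swallowed by squaring in $A_k$ --- is a plausible heuristic but is never turned into an argument; the case analysis you defer to the last paragraph is precisely where the proof would have to live (for instance, starting from the Bruhat decomposition, or from $\mathcal G_k = H_kA_kH_k$ and showing $A_kH_k \subset H_k^\circ A_kU_k$ by explicit rational computation). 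As written, the proposal assumes its two main conclusions (the rational Cartan and the rational Iwasawa decompositions) and only gestures at the passage between $H_k$ and $H_k^\circ$, so it cannot be accepted as a proof.
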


\begin{remark}
The criteria for a generalized Cartan involution in \cite{helmwang} is much stronger than in this paper. The additional conditions in \cite{helmwang} guarantee the existence of (a generalization of) a Cartan and Iwasawa decomposition. 

\end{remark}

\section{Generalizing the Decompositions to Algebraic Groups}\label{extending}

As previously discussed,
the Cartan and Iwasawa decompositions are defined for real semisimple Lie groups when paired with a Cartan involution. In general, for any field $k$ with a general involution $\tau$, the set $H_k^\tau Q^\tau$ is contained in, but not equal to, $G_k$.

\begin{ex}
Let $G$ be defined over $k=\mathbb R$ and $\tau=\tau_1$ the involution of $G$.

Consider $g=\left(\begin{smallmatrix} 1 & 2(\sqrt{5}-3)^{-1} \\ \frac{1}{2}(\sqrt{5}-3) & 2 \end{smallmatrix}\right)\in G_{\mathbb R}$. 
If $g\in H_{\mathbb R}^\tau\tildeQ$, then there exists $h=\left(\begin{smallmatrix} a & b \\ b & a \end{smallmatrix}\right)\in H_{\mathbb R}^\tau$ such that $h^{-1}g\in \tildeQ$. Computing $\tau(h^{-1}g)=(h^{-1}g)^{-1}$ as in \eqref{hg}, implies $a=b$, hence $h\not\in H_{\mathbb R}^\tau$. Therefore $g\not\in H_{\mathbb R}^\tau\tildeQ$. Borrowing Lemma \ref{QinQtilde} from later in the paper stating $Q^\tau\subset \tildeQ$, we see $g\not\in H_{\mathbb R}^\tau Q^\tau$, thus the traditional Cartan decomposition does not hold. 

\begin{equation}\label{hg}
\tau(h^{-1}g)=
\begin{pmatrix} \frac{2b}{\sqrt{5}-3}+2a & b+a\left(\frac{\sqrt{5}-3}{2}\right) \\ \frac{2a}{\sqrt{5}-3}+2b & a+b\left(\frac{\sqrt{5}-3}{2}\right)\end{pmatrix} = \begin{pmatrix} \frac{2b}{\sqrt{5}-3}+2a & -2b-\frac{2a}{\sqrt{5}-3} \\ \left(\frac{3-\sqrt{5}}{2}\right)a-b & a+b\left(\frac{\sqrt{5}-3}{2}\right)\end{pmatrix}=(h^{-1}g)^{-1}
\end{equation}

\end{ex}

To account for the missing elements, we introduce the unipotent subgroup $U$ of $G$ consisting of upper triangular matrices with ones on the diagonal. 

\[U_k=\left\{\begin{pmatrix} 1 & u_1 \\ 0 & 1\end{pmatrix} \bigg\vert u_1 \in k\right\}\]

With the addition of the new subgroup, we have the following result.

\begin{thrm}\label{HQR}

For $G$ with the involution $\tau$ of $G$, 
$G_k=H_k^\tau \tildeQ U_k$,
where $H^\tau, \tildeQ$, and $U$ are the fixed-point group, extended symmetric space, and unipotent subgroup, respectively, of $G$.

\end{thrm}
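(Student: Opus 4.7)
The plan is to reduce to $\tau=\tau_m$ via the preceding classification of involutions of $G$ and then construct $h,q,u$ explicitly. Write $g=\begin{pmatrix}\alpha & \beta \\ \gamma & \delta\end{pmatrix}\in G_k$; we seek $h=\begin{pmatrix}a & b \\ mb & a\end{pmatrix}\in H_k^{\tau_m}$ (so $a^2-mb^2=1$), $q=\begin{pmatrix}x & y \\ -my & z\end{pmatrix}\in\widetilde{Q^{\tau_m}}$ (so $xz+my^2=1$), and $u=\begin{pmatrix}1 & t \\ 0 & 1\end{pmatrix}\in U_k$ with $g=hqu$.

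The crux is a preliminary claim: any $v=\begin{pmatrix}\alpha' & \beta' \\ \gamma' & \delta'\end{pmatrix}\in G_k$ with $\alpha'\neq 0$ already lies in $\widetilde{Q^{\tau_m}}U_k$. Indeed, setting $x=\alpha'$, $y=-\gamma'/m$, $t=(\beta'+\gamma'/m)/\alpha'$, and $z=\delta'+myt$, a direct multiplication shows $qu=v$, and the defining constraint $xz+my^2=1$ for $\widetilde{Q^{\tau_m}}$ falls out of $\det v=1$ after substitution. Granted this, it suffices to find $h\in H_k^{\tau_m}$ such that $(h^{-1}g)_{1,1}=a\alpha-b\gamma\neq 0$, because then the claim places $h^{-1}g$ in $\widetilde{Q^{\tau_m}}U_k$.

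If $\alpha\neq 0$, take $h=I$. If $\alpha=0$, then $\det g=1$ forces $\gamma\neq 0$, so we need $b\neq 0$. The curve $a^2-mb^2=1$ admits the rational parametrization through $(1,0)$ given by $b=-2s/(s^2-m)$ and $a=1+sb$; here $b\neq 0$ whenever $s\in k^*$ and $s^2\neq m$. Such an $s$ exists in every field of characteristic not $2$ except for $k=\mathbb F_3$ with $m=1$, since in $\mathbb F_3^*=\{1,-1\}$ both elements square to $1=m$.

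The principal obstacle is therefore this single exceptional case $k=\mathbb F_3,\, m=1$, where $H_k^{\tau_m}=\{\pm I\}$ and the preceding argument fails. But here $\alpha=0$ together with $\det g=-\beta\gamma=1$ forces $\gamma=-1/\beta=-\beta$ (using $\beta^2=1$ in $\mathbb F_3^*$), which is precisely the relation $\gamma=-m\beta$. Repeating the direct multiplication with $x=0$ (so $y=\beta$) shows that a determinant-one matrix with vanishing $(1,1)$-entry lies in $\widetilde{Q^{\tau_m}}U_k$ exactly when its $(2,1)$-entry equals $-m$ times its $(1,2)$-entry, which is the identity just verified. Hence $g\in\widetilde{Q^{\tau_m}}U_k$ with $h=I$ in this case as well, completing the proof.
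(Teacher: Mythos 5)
Your argument is correct, but it follows a genuinely different route from the paper's. The paper proves $G_k\subset H_k^\tau\tildeQ U_k$ by running through the Bruhat decomposition \eqref{bruhat}: elements of $B_k$ lie in $\tildeQ U_k$ because the diagonal torus is $\tau$-split (Lemma \ref{taustable}), and for $g\in B_k\omega B_k$ an explicit $u$ (and, when the relevant entry vanishes, an explicit $h\neq\pm\Id$, whose existence outside $k=\mathbb F_3$ rests on Lemma \ref{F3}) is produced so that $gu$, respectively $hgu$, is $\tau$-split; the case $k=\mathbb F_3$ is only asserted to be ``easily verified.'' You bypass the Bruhat decomposition entirely: your key lemma --- every determinant-one matrix with nonzero $(1,1)$-entry lies in $\widetilde{Q^{\tau_m}}U_k$, since solving $qu=v$ entrywise makes the constraint $xz+my^2=1$ collapse to $\det v=1$ --- plays the role of the big cell, and left multiplication by some $h\in H_k^{\tau_m}$ with $b\neq 0$ (produced via the rational parametrization of the conic $a^2-mb^2=1$, which exists for every $k$ of characteristic not $2$ except $\mathbb F_3$ with $m\in(k^*)^2$) moves an arbitrary $g$ into that set; the single exceptional case is then settled by the $x=0$ computation, where $\alpha=0$ and $\det g=1$ force exactly the relation $\gamma=-m\beta$ needed. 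What your route buys: it is self-contained (no Bruhat decomposition, no appeal to reductivity of the fixed-point group), it yields an explicit description of $\widetilde{Q^{\tau_m}}U_k$ as a set, and it actually carries out the $\mathbb F_3$ verification the paper leaves to the reader. What the paper's route buys: the Bruhat setup and the $\tau$-splitness of $T$ are reused later (for instance in Theorem \ref{HwR} and Lemma \ref{UHQ}), so its proof integrates more directly with the subsequent refinements. One shared gloss: like the paper, you reduce to $\tau=\tau_m$ via isomorphy of involutions without noting that the conjugating automorphism moves the fixed subgroup $U$; this is harmless for the result as intended, but it is no more justified in your write-up than in the paper's.
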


 \begin{remark}This decomposition serves as a generalization of both the Cartan and Iwasawa decompositions. It contains the fixed-point group and symmetric space similar to the Cartan decomposition. Additionally, because the maximal $(\theta,k)$-split torus $A$ is contained in $\tildeQ$ and we have a unipotent subgroup, it generalizes the Iwasawa decomposition. \end{remark}
 
 To prove Theorem \ref{HQR}, we use the Bruhat Decomposition as in \cite{boreltits2}.
 
 \begin{thrm}
 
For an algebraic group $\mathcal G$, let $P$ be a minimal parabolic $k$-subgroup of $G$, $A$ a maximal $k$-split torus in $P$, and $W(A)$ the Weyl group of $A$.
Then $G_k$ decomposes as the disjoint union of the double cosets of $P_k$ parameterized by $W(A)$. 
\[G_k=\bigcupdot_{\omega\in W(A)}P_k\omega P_k\]
\end{thrm}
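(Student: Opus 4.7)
The plan is to verify the stated decomposition concretely in the only setting the paper actually uses, namely $G = \SL_2(\bar k)$, where the abstract Borel--Tits machinery collapses to elementary row and column operations. The general statement cited from \cite{boreltits2} then follows by the standard BN-pair argument, which I would not rehearse here.

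For $\SL_2$, I would first pin down the data: the minimal parabolic $P$ is the Borel subgroup $B$ of upper-triangular matrices, $A$ is the diagonal torus, and $W(A) = N_G(A)/Z_G(A)$ has order two, with non-trivial representative $w = \begin{pmatrix} 0 & -1 \\ 1 & 0 \end{pmatrix}$. Thus the theorem reduces to showing $G_k = B_k \sqcup B_k w B_k$.

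For the covering, I would take an arbitrary $g = \begin{pmatrix} a & b \\ c & d \end{pmatrix} \in G_k$ and split into two cases. If $c=0$, then $g \in B_k$. If $c \neq 0$, I would clear the $(1,1)$-entry by left-multiplying $g$ by $\begin{pmatrix} 1 & -a/c \\ 0 & 1 \end{pmatrix} \in B_k$ and clear the $(2,2)$-entry by right-multiplying by $\begin{pmatrix} 1 & -d/c \\ 0 & 1 \end{pmatrix} \in B_k$; the determinant-one relation $ad - bc = 1$ then forces the result to be the anti-diagonal matrix $\begin{pmatrix} 0 & -1/c \\ c & 0 \end{pmatrix}$, which factors as $\operatorname{diag}(1/c, c) \cdot w$ and so lies in $A_k w \subset B_k w B_k$. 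Undoing the two $B_k$-multiplications places $g$ in $B_k w B_k$.

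For disjointness, I would compute the $(2,1)$-entry of a generic product $b_1 w b_2$: if $b_1$ has $(2,2)$-entry $\delta$ and $b_2$ has $(1,1)$-entry $\pi$, a direct multiplication gives that entry as $\delta \pi$, which is nonzero because the diagonal entries of triangular matrices in $\SL_2(k)$ are units. Since every element of $B_k$ has $(2,1)$-entry zero, the two subsets $B_k$ and $B_k w B_k$ are forced to be disjoint. The step I expect to be the real obstacle in the general $\mathcal G$ case treated by Borel--Tits is exactly this disjointness: in higher rank there is no single ``non-trivial cell'' to isolate by a one-entry calculation, and the brute-force verification must be replaced by an induction on the length of $w \in W(A)$ controlled by the BN-pair axioms, as carried out in \cite{boreltits2}.
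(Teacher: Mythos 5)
Your argument is correct, but it is worth noting that the paper does not prove this statement at all: it is imported verbatim from Borel--Tits, and the only thing the paper adds is the explicit identification of the data for $\SL_2$ in \eqref{bandt} and \eqref{bruhat}. What you have done instead is supply an actual proof in the one case the paper uses. Your covering computation checks out: for $c\neq 0$, left multiplication by $\left(\begin{smallmatrix}1 & -a/c \\ 0 & 1\end{smallmatrix}\right)$ turns the $(1,2)$-entry into $(bc-ad)/c=-1/c$ and kills the $(1,1)$-entry, right multiplication by $\left(\begin{smallmatrix}1 & -d/c \\ 0 & 1\end{smallmatrix}\right)$ kills the $(2,2)$-entry, and the resulting antidiagonal matrix is $\operatorname{diag}(1/c,c)\,\omega$. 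Your disjointness computation is also correct: the $(2,1)$-entry of $b_1\omega b_2$ is the product of two diagonal entries of elements of $B_k$, hence a unit, while every element of $B_k$ has $(2,1)$-entry zero. (Your representative $\left(\begin{smallmatrix}0 & -1 \\ 1 & 0\end{smallmatrix}\right)$ differs from the paper's $\left(\begin{smallmatrix}0 & 1 \\ -1 & 0\end{smallmatrix}\right)$ by $-\Id\in T_k$, so the double cosets agree.) The one caveat is that the theorem as stated concerns an arbitrary algebraic group $\mathcal G$, and for that generality you, like the paper, ultimately delegate to the BN-pair induction in \cite{boreltits2}; your proposal is therefore a complete proof only of the $\SL_2$ instance. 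Since that instance is the only one invoked in the proof of Theorem \ref{HQR}, this is a harmless and arguably clarifying restriction, and your explicit version has the advantage of making the later coset manipulations in \eqref{bwb} transparent.
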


\begin{remark}[Bruhat Decomposition of $G$]
For a $k$-split group, $P=B$ is a Borel subgroup and $A=T$ is a maximal torus. For $G_k$, the Bruhat decomposition is $\displaystyle G_k=\bigcupdot _{\omega\in W(T)}B_k\omega B_k$.

Let the maximal torus $T$ be the subgroup of diagonal matrices in $G$, and  the Borel subgroup $B\supset T$ the upper triangular matrices in $G$.

\begin{equation}\label{bandt} B_k=\left\{\begin{pmatrix} x & y \\ 0 & z \end{pmatrix} \bigg \vert x,y,z\in k, xz=1 \right\} \quad T_k=\left\{\begin{pmatrix} a & 0 \\ 0 & a^{-1} \end{pmatrix} \bigg \vert a\in k^*\right\}\end{equation}

Let $\Id$ be the $2\times 2$ identity matrix, then we can define the Weyl group $W(T)$ and Bruhat decomposition of $G_k$.

\begin{equation}\label{bruhat}W(T)=\left\{ \Id,\begin{pmatrix} 0 & 1 \\ -1 & 0 \end{pmatrix}\right\} \quad G_k=B_k\bigcupdot B_k\begin{pmatrix} 0 & 1 \\ -1 & 0 \end{pmatrix}B_k\end{equation}
\end{remark}

\begin{remark}\label{b=tu}
 For the Borel subgroup $B$, we can write $B=T U$, where $T$ is the $k$-split maximal torus and $U$ is the unipotent radical.
\end{remark}

\begin{lem}\label{taustable}

Let $\tau$ be an involution of $G$ and $T$ the $k$-split maximal torus of diagonal matrices. Then $T$ is invariant under $\tau$ and is maximal $(\tau,k)$-split. 
\end{lem}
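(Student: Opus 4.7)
The plan is to reduce to a representative involution and then do a direct matrix computation. By the earlier classification we may assume $\tau = \tau_m = \Inn(M)$ with $M = \left(\begin{smallmatrix} 0 & 1 \\ m & 0 \end{smallmatrix}\right)$ for some $m \in k^*$. The isomorphy-class reduction is safe: if the result holds on the representative, conjugating by $\chi$ realizing $\chi\tau_m\chi^{-1} = \tau$ transfers the conclusion to any representative of the class (and the diagonal torus is preserved under the relevant conjugations up to replacing it by an equally good maximal torus).

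The first main step is a direct calculation. Take an arbitrary $t = \left(\begin{smallmatrix} a & 0 \\ 0 & a^{-1}\end{smallmatrix}\right) \in T$ and compute $\tau_m(t) = MtM^{-1}$. A straightforward multiplication, using $M^{-1} = \left(\begin{smallmatrix} 0 & 1/m \\ 1 & 0 \end{smallmatrix}\right)$, yields
\[
\tau_m(t) \;=\; \begin{pmatrix} a^{-1} & 0 \\ 0 & a \end{pmatrix} \;=\; t^{-1}.
\]
From this single identity we immediately read off two facts: first, $\tau_m(t) \in T$ for every $t \in T$, so $T$ is $\tau$-invariant; second, every element of $T$ is $\tau$-split, so $T$ itself is a $\tau$-split torus. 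Combined with the fact that $T$ is $k$-split by construction (the entries live in $k^*$), this says $T$ is a $(\tau,k)$-split torus.

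The second step is maximality. In $G = \SL_2(\bar k)$, every maximal torus has dimension $1$, and $T$ is already a maximal torus of $G$. Hence no torus of $G$ properly contains $T$, so in particular no $(\tau,k)$-split torus properly contains $T$. Therefore $T$ is maximal $(\tau,k)$-split.

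I do not expect a real obstacle here; the entire lemma rests on the single identity $MtM^{-1} = t^{-1}$. The only subtlety worth flagging is the reduction to the representative $\tau_m$: one should note that the choice of representative in the square class of $m$ does not affect the computation, since replacing $m$ by $mc^2$ merely conjugates $M$ by $\mathrm{diag}(c,1)$ (which normalizes $T$), so the claim about $T$ is intrinsic to the isomorphy class of $\tau$.
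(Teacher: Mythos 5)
Your proof is correct and follows essentially the same route as the paper: the core of both arguments is the single computation $MtM^{-1}=t^{-1}$ for diagonal $t$, which shows $T$ is $\tau$-split and hence $\tau$-stable. You additionally spell out the maximality step (which the paper leaves implicit, since $T$ is already a maximal torus of $\SL_2$) and the harmless reduction to the representative $\tau_m$, both of which are fine.
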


\begin{proof}
Let $\tau=\tau_m$ be an involution of $G$ and $t=\left(\begin{smallmatrix} a & 0 \\ 0 & a^{-1} \end{smallmatrix}\right)\in T$. Then $T$ is $\tau$-split, thus invariant under $\tau$.
\[\tau(t)=\begin{pmatrix} 0 & 1 \\ m & 0\end{pmatrix} \begin{pmatrix}a & 0 \\ 0 & a^{-1}\end{pmatrix} \begin{pmatrix} 0 & m^{-1} \\ 1 & 0 \end{pmatrix}=\begin{pmatrix} a^{-1} & 0 \\ 0 & a \end{pmatrix}=t^{-1}\in T\]
\end{proof}

\begin{remark} A subgroup which is invariant under an automorphism $\phi$ is said to be $\phi$-stable. \end{remark}

\begin{lem}\label{F3}
Let $G$ be defined over $k$ and $\tau$ an involution of $G$. If $H^\tau_k=\{\pm\Id\}$, then $k\simeq\mathbb F_3$.
\end{lem}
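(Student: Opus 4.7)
The plan is to reduce to the standard involution $\tau_m$ and then exploit the rational parameterization of the associated conic. By the earlier classification, I may assume $\tau = \tau_m$ for some representative $m \in k^*$, and the example computation identifies
\[H_k^{\tau_m} = \left\{ \begin{pmatrix} a & b \\ mb & a \end{pmatrix} \bigg\vert\ a,b\in k,\ a^2 - mb^2 = 1\right\}.\]
Thus the hypothesis $H_k^\tau = \{\pm\Id\}$ translates into the statement that the affine conic $a^2 - mb^2 = 1$ has only $(\pm 1, 0)$ as its $k$-rational points.

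Next I would sweep the pencil of lines $b = t(a-1)$ through the base point $(1,0)$. Substituting into the conic and cancelling the factor $a - 1$ yields, for each $t \in k$ with $mt^2 \ne 1$, the rational point
\[(a,b) = \left(\frac{mt^2+1}{mt^2-1},\ \frac{2t}{mt^2-1}\right).\]
The single line through $(1,0)$ not captured by this pencil is $a = 1$, which meets the conic only at $(1,0)$ itself, so every solution distinct from $(1,0)$ arises in the parameterization.

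The closing step is to observe that for $t \in k^*$ with $mt^2 \ne 1$ the $b$-coordinate above is nonzero, so the corresponding matrix is neither $\Id$ nor $-\Id$. The hypothesis therefore forces every $t \in k^*$ to satisfy $mt^2 = 1$, equivalently $t^2 = m^{-1}$. Since a quadratic has at most two roots in a field, $|k^*| \le 2$, and because $\operatorname{char}(k) \ne 2$ the case $|k| = 2$ is excluded, leaving only $|k| = 3$, i.e.\ $k \simeq \mathbb{F}_3$.

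The main thing to be careful about is that the ``missing'' slopes $mt^2 = 1$ do not conceal additional solutions; substituting back gives the inconsistency $0 = -2$ in characteristic not $2$, so no points are lost. Everything else is bookkeeping: the parameterization is classical, and the bound on $|k^*|$ is immediate once $mt^2 = 1$ is forced for all $t \in k^*$.
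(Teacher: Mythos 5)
Your proof is correct, but it follows a different route from the paper. You reduce to $\tau=\tau_m$ (legitimate, since the conjugating automorphism preserves $G_k$ and fixes $\pm\Id$) and then parameterize the conic $a^2-mb^2=1$ by the pencil of lines $b=t(a-1)$ through $(1,0)$: each $t\in k^*$ with $mt^2\neq 1$ yields a rational point with $b=\tfrac{2t}{mt^2-1}\neq 0$, so the hypothesis $H^\tau_k=\{\pm\Id\}$ forces $mt^2=1$ for every $t\in k^*$, whence $\vert k^*\vert\le 2$ and, as $\operatorname{char}k\neq 2$, $k\simeq\mathbb F_3$. (The completeness discussion of the parameterization is not even needed for this direction; producing the points suffices. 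Note too that your argument gives the subsequent remark for free: over $\mathbb F_3$ one gets $m=1$, i.e.\ $m\in(k^*)^2$.) The paper instead argues structurally: it cites the result of Helminck--Wang that the fixed-point group of an involution is reductive, so for $\SL_2$ it is a one-dimensional torus with $H\simeq\overline{k}^*$ over the closure, and concludes that $\vert H_k\vert=2$ forces $k\simeq\mathbb F_3$. That argument is shorter but tacitly treats $H_k$ as if it were $k^*$, which is literally true only when the torus is $k$-split; your explicit conic computation handles the split and anisotropic (norm-one) forms uniformly and is self-contained, at the cost of being special to $\SL_2$ rather than invoking the general structural fact.
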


\begin{proof}
From \cite{helmwang}, the fixed-point group on an involution is always reductive. Thus for $G$, the fixed-point group is a torus. Because $H\simeq \overline{ k^*}$, if $\vert H\vert=2$, it must be that $k\simeq \mathbb F_3$.  
\end{proof}

\begin{remark}
For $G$ defined over $k=\mathbb F_3$, $H^{\tau_m}_k=\{\pm\Id\}$ only for $m\in (k^*)^2$.
\end{remark}

\begin{proof}[Proof of Theorem \ref{HQR}] 

Let $\tau$ be an involution of $G$. Because $H_k^\tau, \tildeQ,$ and $U_k$ are contained in $G_k$, $H_k^\tau \tildeQ U_k \subset G_k$ is clear. 
We will show the reverse containment, $G_k\subset H_k^\tau \tildeQ U_k$ using the equivalent statement \eqref{equiv}, replacing $G_k$ with its Bruhat decomposition as in \eqref{bruhat}.
\begin{equation}\label{equiv}B_k\bigcupdot B_k\begin{pmatrix} 0 & 1 \\ -1 & 0 \end{pmatrix}B_k\subset H_k^\tau \tildeQ U_k\end{equation}

First, consider $g\in B_k$. By Remark \ref{b=tu}, $u^{-1}g=t$ for some $u^{-1}\in U_k$ and $t\in T_k$. By Lemma \ref{taustable}, $u^{-1}g=t$ is $\tau$-split, hence $u^{-1}g\in \tildeQ$. Therefore $g\in \tildeQ\subset H_k^\tau\tildeQ U_k$.

Second, consider $g\in B_k\left(\begin{smallmatrix} 0 & 1 \\ -1 & 0 \end{smallmatrix}\right)B_k$. Then for some $a,b\in k^*$ and $\alpha,\beta\in k$, we rewrite $g$ as in \eqref{bwb}.
 \begin{equation}\label{bwb}g=\begin{pmatrix}a & \alpha \\ 0 & a^{-1} \end{pmatrix}\begin{pmatrix} 0 & 1 \\ -1 & 0 \end{pmatrix} \begin{pmatrix}b & \beta \\ 0 & b^{-1}\end{pmatrix}\end{equation}

If $\alpha\neq 0$, let $u=\left(\begin{smallmatrix} 1 & \frac{ma^2-b^2-m\alpha\beta ab}{m\alpha ab^2} \\ 0 & 1\end{smallmatrix}\right)$.
Then $gu\in\tildeQ$ by \eqref{guinQ}. Therefore, $g\in H_k^\tau\tildeQ U_k$. 

\begin{equation}\label{guinQ} 
\tau(gu)=\begin{pmatrix} -\frac{a^2m-b^2}{a^2bm\alpha} & -\frac{b}{ma} \\ \frac{b}{a} & -\alpha b\end{pmatrix} =(gu)^{-1}
 \end{equation}

If $\alpha=0$, let $h=\left(\begin{smallmatrix} a_1 & b_1 \\ mb_1 & a_1 \end{smallmatrix}\right) \in H_k^\tau \setminus\{\pm\Id\}$, and $u=\left(\begin{smallmatrix} 1 & \frac{ma_1a^2-b^2a_1-mb_1\beta b}{mb^2b_1} \\ 0 & 1 \end{smallmatrix}\right)$. Then $hgu\in \tildeQ$ by \eqref{hgu}. Therefore $g\in H_k^\tau\tildeQ U_k$.

\begin{equation}\label{hgu}
\tau(hgu)=\begin{pmatrix}  -\frac{b_1b}{a} & \frac{a_1b}{am} \\ -\frac{a_1b}{a} & \frac{a^2m^2b_1^2-ma^2a_1^2+b^2a_1^2}{abmb_1}\end{pmatrix} =(hgu)^{-1}
\end{equation}

For $k=\mathbb F_3$, one can easily verify this results holds although $H=\{\pm \Id\}$.
\end{proof}

\begin{remark}
For a general field $k$ and involution $\tau$, $G_k\neq H^\tau_k Q^\tau U_k$ and thus expanding to the extended symmetric space is necessary. Further on, we give cases in which the symmetric space will suffice. 
\end{remark}

\begin{remark}\label{orbits} Let $G$ be defined over a field $k$ and $\tau$ an involution of $G$. 
The $(H^\tau_k \times U_k)$-orbits on $G_k$ are defined by $(h,u)\bullet g:=hgu$ for $h\in H^\tau_k, u\in U_k,$ and $g\in G_k$. By Theoren \ref{HQR}, we can choose orbit representatives in $\tildeQ$. Similarly, the twisted $U_k$-orbits on $\tildeQ$ are defined by the twisted action $u*q:=u^{-1}q\tau(u)$ for $u\in U_k$ and $q\in Q^\tau$.  
By Remark \ref{G/H}, $Q^\tau\simeq G_k/H^\tau_k$, thus the $U_k$-orbits on $Q^\tau$ are in bijective correspondence with the $(H^\tau_k \times U_k)$-orbits on $G_k$ if and only if $H^\tau_kQ^\tau U_k = H^\tau_k\tildeQ U_k$.

\end{remark}

\begin{ex}
Let $G$ be defined over $k=\mathbb Q$ and $\tau=\tau_{-1}$ the involution of $G$. For convenience, we will use $Q^\tau=\{g^{-1}\tau(g)\vert g\in G_k\}$. 
Let the $(H^\tau_k \times U_k)$-orbits on $G_k$ and twisted $U_k$-orbits on $Q^\tau$ be defined as in Remark \ref{orbits}.
Then consider the following
map from $U_k*Q^\tau$ to $(H_k^\tau\times U_k)\bullet G_k$, where $q=g^{-1}\tau(g)$.
\[U_k*q \mapsto (H^\tau_k\times U_k)\bullet g\]

For $q \in Q^\tau$, assume there exists $g_1,g_2\in G_k$ such that $q=g_1^{-1}\tau(g_1)=g_2^{-1}\tau(g_2)$. Then $g_1=hg_2$ for some $h\in H^\tau_k$ by \eqref{g1g2}.
\begin{equation}\label{g1g2}
g_1^{-1}\tau(g_1)=g_2^{-1}\tau(g_2) \Rightarrow \tau(g_1g_2^{-1})=g_1g_2^{-1}\Rightarrow g_1g_2^{-1}\in H^\tau_k
\end{equation}
The map is well-defined because it is independent of coset representative and surjective by definition of $Q^\tau$. 
We may also reverse the map by \eqref{reversemap}.  
\begin{equation}\label{reversemap} (H^\tau_k\times U_k)\bullet g \mapsto g^{-1}\tau(g) \end{equation}
By Theorem \ref{HQR}, let $g=hqu$, $q\in\tildeQ$. Then $(H^\tau_k \times U_k)\bullet g$ maps to 
\[(g)^{-1}\tau(g)=(hqu)^{-1}\tau(hqu)=u^{-1}q^{-1}h^{-1}\tau(h)\tau(q)\tau(u)=u^{1}q^{-1}\tau(q)u=u*q_0\in U_k*q_0\]

for some $q_0=q^{-2}\in Q^\tau$. Over $k=\mathbb Q$, this map is not surjective because not all elements of $Q^\tau$ can be written as $q^{-2}$ for some $q\in \tildeQ$. 

From \cite[Proposition 6.6]{helmwang}, the $U_k$-orbits on $Q^\tau$ can always be represented by an element from the normalizer in $G$ of a $\tau$-stable maximal $k$-split torus $A$, $N_G(A)$. In this case, $N_G(A)\cap Q^\tau$ is the set of diagonal elements in $G_k$. 
Furthermore, the action of $(H^\tau_k\times U_k)$ on $\lambda \in N_G(A)\cap Q^\tau$ can not map $\lambda$ to another element $\mu\in N_G(A)\cap Q^\tau, \mu\neq -\lambda$.
\[(H^\tau_k\times U_k)\bullet\lambda=h\lambda u=\begin{pmatrix}a & b \\ -b & a \end{pmatrix}\begin{pmatrix} \lambda_1 & 0 \\ 0 & \lambda_1^{-1} \end{pmatrix} \begin{pmatrix} 1 & u_1 \\ 0 & 1\end{pmatrix}=\begin{pmatrix} a\lambda_1& u_1a\lambda_1+ b\lambda_1^{-1} \\-b\lambda_1 & -b\lambda_1 u_1+a\lambda_1^{-1}\end{pmatrix} \]

\begin{equation}\label{normelements}
\begin{pmatrix} a\lambda_1& u_1a\lambda_1+ b\lambda_1^{-1} \\-b\lambda_1 & -b\lambda_1 u_1+a\lambda_1^{-1}\end{pmatrix}=\begin{pmatrix} \mu_1 & 0 \\ 0 & \mu_1^{-1}\end{pmatrix}
\end{equation}

Solving \ref{normelements}, we get $h=\pm\Id$ and $u=\Id$. 

If we let $q^{-1}=\left(\begin{smallmatrix} x & y \\ y  & z \end{smallmatrix}\right)\in \tildeQ$, then the only $U_k$-orbits on $Q^\tau$ which correspond to the $(H^\tau_k\times U_k)$-orbits on $G_k$ are the ones whose representative in $N_G(A)$ is of the form $q_0=\left (\begin{smallmatrix} x^2 +y^2 & 0 \\ 0 & y^2+z^2\end{smallmatrix}\right)$. Let $g=\left(\begin{smallmatrix} \lambda_1 & 0 \\ 0 & \lambda_1^{-1}\end{smallmatrix}\right)\in G_k$ such that $\lambda_1>0$ and $\lambda_1$ is not the sum of two squares, then $(H^\tau_k \times U_k)\bullet g$ can not be obtained as a $U_k$-orbit on $Q^\tau$. Hence $G\neq H^\tau_k Q^\tau U_k$. 

\end{ex}

\section{Symmetric Space and Extended Symmetric Spaces}\label{structure}

To understand the structure of the symmetric and extended symmetric spaces of $G$ for any field and involution, we will analyze the relationship between the spaces and then their semisimplicity.  
\subsection{Relationship between the symmetric and extended symmetric Spaces}
\begin{lem}\label{QinQtilde}
For a group $\mathcal G$ with an involution $\tau$, 
the symmetric space is contained within the extended symmetric space. i.e. $Q^\tau \subset \tildeQ$. 
\end{lem}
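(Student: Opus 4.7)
The plan is to verify the containment pointwise by unpacking the two definitions. Let $q \in Q^\tau$, so that $q = g\tau(g)^{-1}$ for some $g \in \mathcal{G}$. To show $q \in \tildeQ$, I need to check that $\tau(q) = q^{-1}$.

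First I would compute $\tau(q)$ using that $\tau$ is a group homomorphism and an involution (so $\tau^2 = \Id$):
\[
\tau(q) = \tau\bigl(g\tau(g)^{-1}\bigr) = \tau(g)\,\tau(\tau(g))^{-1} = \tau(g)\,g^{-1}.
\]
Then I would compute $q^{-1}$ directly from the definition:
\[
q^{-1} = \bigl(g\tau(g)^{-1}\bigr)^{-1} = \tau(g)\,g^{-1}.
\]
Comparing the two expressions gives $\tau(q) = q^{-1}$, which is exactly the membership condition for $\tildeQ$. Since $q$ was an arbitrary element of $Q^\tau$, this proves $Q^\tau \subset \tildeQ$.

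There is no real obstacle here: the statement is essentially a formal consequence of the definitions, relying only on $\tau$ being an involutive automorphism. The only thing to be careful about is to apply $\tau$ as a homomorphism (so that it distributes across products and commutes with inversion) and to use $\tau^2 = \Id$ in the right place when simplifying $\tau(\tau(g)^{-1})$.
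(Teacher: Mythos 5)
Your proof is correct and matches the paper's argument exactly: both compute $\tau(q)=\tau(g)\tau^2(g)^{-1}=\tau(g)g^{-1}=q^{-1}$ directly from the definitions, using that $\tau$ is an involutive automorphism. Nothing further is needed.
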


\begin{proof}
Let $g\tau(g)^{-1}\in Q^\tau$ for some $g\in \mathcal G$, then $g\tau(g)^{-1}\in \tildeQ$.
 \[\tau(q)=\tau(g\tau(g)^{-1})=\tau(g)\tau^2(g)^{-1}=\tau(g)g^{-1}=q^{-1}\]
 \end{proof}
 
 Example \ref{CartanSL2} demonstrates that the symmetric space and extended symmetric space are not equivalent in general. We will determine for which cases we get equality.  

 \begin{thrm}\label{kbartheorem} Let $G$ be defined over $k=\bar k$ and $\tau=\tau_1$ the involution of $G$. Then the extended symmetric space is equivalent to the symmetric space. \end{thrm}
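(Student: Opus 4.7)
By Lemma \ref{QinQtilde} we already have $Q^\tau \subseteq \tildeQ$, so the content of the theorem is the reverse inclusion: every $q \in \tildeQ$ must be realized as $g\tau(g)^{-1}$ for some $g \in G$. My plan is to convert this into a conjugacy problem in $\SL_2(\bar k)$. Since $\tau = \tau_1 = \Inn(M)$ with $M = \left(\begin{smallmatrix} 0 & 1 \\ 1 & 0 \end{smallmatrix}\right)$, we have $g\tau(g)^{-1} = gMg^{-1}M^{-1}$, so the equation $g\tau(g)^{-1} = q$ is equivalent to $gMg^{-1} = qM$. Thus it suffices to show that for every $q \in \tildeQ$ the matrices $M$ and $qM$ are conjugate in $\SL_2(\bar k)$.

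To establish this, I would parameterize $q = \left(\begin{smallmatrix} a & b \\ -b & c \end{smallmatrix}\right)$ with $ac + b^2 = 1$ using the explicit description of $\tildeQ$ for $\tau_1$ given in the earlier example, and then compute $qM = \left(\begin{smallmatrix} b & a \\ c & -b \end{smallmatrix}\right)$. A short calculation yields $\mathrm{tr}(qM) = 0$ and $\det(qM) = -(b^2 + ac) = -1$, matching $M$. Hence both share the characteristic polynomial $x^2 - 1 = (x-1)(x+1)$, whose two roots are distinct since $\mathrm{char}(k) \neq 2$. Both matrices are therefore diagonalizable over $\bar k$ and conjugate to $\mathrm{diag}(1, -1)$, so conjugate to each other in $\GL_2(\bar k)$. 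Picking any $h \in \GL_2(\bar k)$ with $hMh^{-1} = qM$ and any $\alpha \in \bar k$ with $\alpha^2 = \det(h)$, the rescaled element $g := \alpha^{-1} h$ lies in $\SL_2(\bar k)$ and still satisfies $gMg^{-1} = qM$, producing the desired $g$.

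The one nontrivial ingredient---and the step where the hypothesis $k = \bar k$ is used essentially---is the last: promoting a $\GL_2(\bar k)$-conjugator to an $\SL_2(\bar k)$-conjugator by extracting a square root of its determinant. Over a general field this square root need not exist, which is precisely why the equality $Q^\tau = \tildeQ$ is sensitive to the arithmetic of $k$. A more conceptual alternative that I would keep in mind is to observe that $\tildeQ$ is the vanishing locus of the polynomial $b^2 + ac - 1$ in the affine three-space with coordinates $(a,b,c)$; this polynomial is irreducible (as a quadratic in $b$ whose constant term $ac - 1$ is not a square in $\bar k(a,c)$), so $\tildeQ$ is an irreducible, hence connected, affine variety over $\bar k$, and the earlier identification $(\tildeQ)^\circ = Q^\tau$ immediately yields $\tildeQ = Q^\tau$.
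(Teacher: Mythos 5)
Your argument is correct, but it proceeds quite differently from the paper. The paper proves the reverse inclusion by exhibiting an explicit preimage $g$ for each $q=\left(\begin{smallmatrix} a & b \\ -b & c\end{smallmatrix}\right)\in\tildeQ$, split into cases according to whether $c\neq 0$ or $c=0$, $b=\pm 1$, with the entries of $g$ written directly in terms of square roots of the entries of $q$ (Table \ref{kbar}). You instead recast $g\tau(g)^{-1}=q$ as the conjugacy equation $gMg^{-1}=qM$ with $M=\left(\begin{smallmatrix}0&1\\1&0\end{smallmatrix}\right)$, observe that $qM$ has trace $0$ and determinant $-1$, hence shares with $M$ the separable characteristic polynomial $x^2-1$ (here $\mathrm{char}\,k\neq 2$ is used), so the two are conjugate in $\GL_2(\bar k)$, and then rescale the conjugator by a square root of its determinant to land in $\SL_2(\bar k)$. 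This buys a case-free, coordinate-light proof and isolates exactly where algebraic closedness enters (extracting one square root), which dovetails nicely with the paper's later examples showing that $Q^\tau\neq\tildeQ$ can fail over $\mathbb{Q}_p$ for other involutions; the paper's table, on the other hand, gives concrete formulas that parallel its treatments of $k=\mathbb{R}$ and $k=\mathbb{Q}_p$. Two small remarks: in your irreducibility aside, the precise criterion is that $-(ac-1)=1-ac$ is not a square in $\bar k(a,c)$ (equivalent to your phrasing since $-1\in(\bar k)^2$, but worth stating correctly); and that alternative argument leans on the lemma $(\tildeQ)^\circ=Q^\tau$, so it is best kept, as you do, as a supplementary observation rather than the main proof.
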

 \begin{proof}
Let $q=\left(\begin{smallmatrix} a & b \\ -b & c \end{smallmatrix}\right)\in \tildeQ$. For $q$ to be in the symmetric space, we need $g\in G$ such that $g\tau(g)^{-1}=q$. Depending on the value of $c$, choose $g\in G$ according to Table \ref{kbar}. Choosing the appropriate $g$ will yield $g\tau(g)^{-1} = q \in Q^\tau$. The reverse containment follows from Lemma 4.1.
\begin{table}
\centering
\caption{$g\in G$ such that $q=g\tau(g)^{-1}\in Q^\tau$ for $k=\bar k$}
\begin{tabular}{| c |c |c| }
\hline
  $c$ & $b$ & $g\in G$ \\
  \hline
  $c\neq 0$ & - & $\begin{pmatrix} \frac{1}{\sqrt{c}} & \frac{b}{\sqrt{c}} \\ 0 & \sqrt{c} \end{pmatrix}$ \\
  \hline
  $c=0$ & b=1 & $\begin{pmatrix} 0 & \sqrt{-a} \\ \frac{\sqrt{-a}}{a} & -\frac{b\sqrt{-a}}{a} \end{pmatrix}$ \\
  \hline 
  $c=0$ & $b=-1$ & $\begin{smallmatrix} 1 & -1 \\ \frac{1-c}{2} & \frac{c+1}{2}\end{smallmatrix}$\\
  \hline
\end{tabular}

\label{kbar}
\end{table}

 \end{proof}
 
 \begin{thrm}\label{real}
 
 Let $G$ be defined over $k=\mathbb R$ and $\tau=\tau_1$ the involution of $G$. Then the extended symmetric space is equivalent to the symmetric space.
\end{thrm}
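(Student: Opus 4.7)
The plan is to deduce equality of $Q^{\tau_1}_{\mathbb R}$ and $\widetilde{Q^{\tau_1}}_{\mathbb R}$ by showing the extended symmetric space is \emph{connected} in the Euclidean topology. Once this is established, the earlier lemma $(\widetilde{Q^\tau})^\circ = Q^\tau$ finishes the job, since a connected subset of a topological group that contains the identity coincides with its own identity component.

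To prove connectedness, I would first identify
\[\widetilde{Q^{\tau_1}}_{\mathbb R} \;=\; \{(a,b,c) \in \mathbb R^3 : ac + b^2 = 1\}\]
as a real affine quadric surface. The linear change of variables $u = (a+c)/2$, $s = (a-c)/2$ (leaving $b$ unchanged) transforms the defining equation into $u^2 + b^2 - s^2 = 1$, the standard one-sheeted hyperboloid in $\mathbb R^3$. This surface is the continuous image of the connected space $\mathbb R \times S^1$ under
\[(s, \theta) \;\mapsto\; \bigl(\sqrt{1+s^2}\cos\theta,\; \sqrt{1+s^2}\sin\theta,\; s\bigr),\]
hence is connected. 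Since $\Id$ lies in $\widetilde{Q^{\tau_1}}_{\mathbb R}$, the lemma then yields $\widetilde{Q^{\tau_1}}_{\mathbb R} = (\widetilde{Q^{\tau_1}}_{\mathbb R})^\circ = Q^{\tau_1}_{\mathbb R}$.

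The main obstacle is precisely recognizing $ac + b^2 = 1$ as a one-sheeted hyperboloid; once the quadratic form is diagonalized, connectedness is standard. This approach neatly sidesteps the problem that defeats the $\bar k$-construction of Table \ref{kbar} over $\mathbb R$, where the tabulated preimages involve $\sqrt c$ or $\sqrt{-a}$, which need not exist in $\mathbb R$ for admissible $(a,b,c)$. Should a constructive preimage be desired instead, one can write $g = \left(\begin{smallmatrix}x&y\\z&w\end{smallmatrix}\right)$; the equations $g\tau_1(g)^{-1} = q$ and $\det g = 1$ reduce (using $ac + b^2 = 1$) to the single condition $cx^2 - 2bxz - az^2 = 1$, with $y = bx + az$ and $w = cx - bz$ then determined. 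The binary form on the left is indefinite over $\mathbb R$ (its Gram matrix has determinant $-1$), so it represents $1$, furnishing an explicit $g \in G_{\mathbb R}$.
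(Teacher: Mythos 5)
Your main route has a genuine gap: it leans on the unlabeled lemma in Section \ref{preliminaries} asserting $(\tildeQ)^\circ=Q^\tau$, but the paper's proof of that lemma only establishes the easy inclusion $Q^\tau\subseteq(\tildeQ)^\circ$ (the image of a connected group under $g\mapsto g\tau(g)^{-1}$ is connected and contains $\Id$). The inclusion you actually need, $(\tildeQ)^\circ\subseteq Q^\tau$, is never proved there, and at the level of $\mathbb R$-points it is precisely the surjectivity content of Theorem \ref{real}: once you have shown $\widetilde{Q^{\tau_1}}_{\mathbb R}$ is connected (your hyperboloid computation is correct), the statement ``the identity component of $\widetilde{Q^{\tau_1}}_{\mathbb R}$ equals $Q^{\tau_1}_{\mathbb R}$'' \emph{is} the theorem. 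So quoting the lemma here is essentially circular. To make the topological route honest you would have to supply the missing ingredient yourself, namely that the image $\{g\tau(g)^{-1}\ \vert\ g\in G_{\mathbb R}\}$ is open and closed in $\widetilde{Q^{\tau_1}}_{\mathbb R}$ (e.g.\ via the submersion/implicit function theorem for the twisted action together with finiteness of the $G_{\mathbb R}$-orbits on the real points); neither you nor the paper's lemma provides this.

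Your ``constructive fallback,'' however, is a complete and correct proof, and it is genuinely different from the paper's argument. The reduction is right: with $q=\left(\begin{smallmatrix} a & b\\ -b & c\end{smallmatrix}\right)$, $ac+b^2=1$, the equation $q\tau_1(g)=g$ forces $y=bx+az$, $w=cx-bz$, the remaining two entries hold automatically using $ac+b^2=1$, and $\det g=1$ becomes $cx^2-2bxz-az^2=1$; this binary form has discriminant matrix of determinant $-(ac+b^2)=-1$, hence is indefinite over $\mathbb R$ and represents $1$, producing $g\in G_{\mathbb R}$ with $g\tau_1(g)^{-1}=q$ (the containment $Q^\tau\subseteq\tildeQ$ is Lemma \ref{QinQtilde}, as in the paper). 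The paper instead exhibits explicit preimages in Table \ref{Rtable} by a case split on the sign of $a$; your quadratic-form argument avoids the case analysis, explains \emph{why} real solvability is automatic (indefiniteness of a determinant $-1$ form), and dovetails with the paper's later treatment over $\mathbb Q_p$ via the Hilbert symbol and over $\mathbb Q$ via the Hasse principle. I would promote that argument to be the proof and drop, or properly repair, the connectedness route.
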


\begin{proof}
Let $q=\left(\begin{smallmatrix} a & b \\ -b & c \end{smallmatrix}\right)\in \tildeQ$. As in the proof of Theorem \ref{kbartheorem}, we need $g\in G_{\mathbb R}$ such that $g\tau(g)^{-1}=q$. Depending on the value of $a$, choose $g\in G_{\mathbb R}$ according to Table \ref{Rtable}. Choosing the appropriate $g$ will yield $g\tau(g)^{-1}=q\in Q^\tau$. The reverse containment follows from Lemma \ref{QinQtilde}.

\begin{table}
\caption{$g\in G_{\mathbb R}$ such that $q=g\tau(g)^{-1}\in Q^\tau$ for $k=\mathbb R$}
\centering
\begin{tabular}{| c |c |c| }
\hline
  $a$ & $b$ & $g\in G_{\mathbb R}$ \\
  \hline
  $a>0$ & - & $\begin{pmatrix} \sqrt{a} & 0 \\ -\frac{b}{\sqrt{a}}& \frac{1}{\sqrt{a}}  \end{pmatrix}$ \\
  \hline
  $a<0$ & - & $\begin{pmatrix} 0 & \sqrt{-a} \\ \frac{\sqrt{-a}}{a} & -\frac{b\sqrt{-a}}{a} \end{pmatrix}$ \\
  \hline
  $a=0$ & $b=1$ & $\begin{pmatrix} 1 & 1 \\ \frac{c-1}{2} & \frac{c+1}{2} \end{pmatrix}$\\
  \hline 
  $a=0$ & $b=-1$ & $\begin{pmatrix} 1 & -1 \\ \frac{1-c}{2} & \frac{c+1}{2}\end{pmatrix}$\\
  \hline
\end{tabular}
\label{Rtable}
\end{table}
\end{proof}

In \cite{Buelletal}, the structure of the symmetric space of $G$ defined over $k=\mathbb F_q$ is analyzed, including the following result. 
\begin{thrm}\label{symspfinite}

Let $G$ be defined over $k=\mathbb F_q$, with characteristic of $k$ not $2$, and $\tau$ an involution of $G$. Then the symmetric space is equivalent to extended symmetric space.
\end{thrm}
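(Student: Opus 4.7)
The plan is to combine the containment $Q^\tau \subseteq \widetilde{Q^\tau}$ from Lemma \ref{QinQtilde} with a cardinality argument that exploits the finiteness of $k = \mathbb{F}_q$. Since both sets live inside $G_k$, it suffices to show $|Q^\tau| = |\widetilde{Q^\tau}|$. By Remark \ref{G/H}, the symmetric space is parametrized as $Q^\tau \simeq G_k/H_k^\tau$, so
\[
|Q^\tau| = \frac{|G_k|}{|H_k^\tau|} = \frac{q(q^2-1)}{|H_k^{\tau_m}|}.
\]
Without loss of generality $\tau \simeq \tau_m$, so $H_k^{\tau_m}$ is the group of matrices $\left(\begin{smallmatrix}a & b \\ mb & a\end{smallmatrix}\right)$ with $a^2 - mb^2 = 1$.

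The first main step is to identify $|H_k^{\tau_m}|$ according to the square class of $m$. If $m \in (k^*)^2$, writing $m = n^2$ turns the defining equation $a^2 - mb^2 = 1$ into the norm-$1$ condition for a split torus, giving $|H_k^{\tau_m}| = q-1$ and therefore $|Q^\tau| = q(q+1)$. If $m \notin (k^*)^2$, then $a^2 - mb^2 = 1$ is the norm-$1$ condition for the quadratic extension $\mathbb{F}_{q^2}/\mathbb{F}_q$, so $|H_k^{\tau_m}| = q+1$ and $|Q^\tau| = q(q-1)$.

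Next I would count $\widetilde{Q^{\tau_m}}$ directly from its defining equation $ac + mb^2 = 1$ with $a,b,c \in k$. Fixing $b \in k$ and setting $\lambda = 1 - mb^2$, the number of pairs $(a,c)$ with $ac = \lambda$ is $q-1$ if $\lambda \neq 0$ and $2q-1$ if $\lambda = 0$. So the total depends on how many $b \in k$ satisfy $mb^2 = 1$. When $m$ is a non-square this equation has no solutions, giving $|\widetilde{Q^{\tau_m}}| = q(q-1)$. When $m$ is a square, exactly two values of $b$ satisfy it, giving
\[
|\widetilde{Q^{\tau_m}}| = (q-2)(q-1) + 2(2q-1) = q^2 + q.
\]
In both cases the count matches $|Q^\tau|$, so combined with $Q^\tau \subseteq \widetilde{Q^\tau}$ we conclude equality.

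The potential obstacle is not the arithmetic but verifying that the structure of $H_k^{\tau_m}$ really splits cleanly along the square-class dichotomy of $m$; once that is settled the rest is bookkeeping. One subtlety worth a sentence in the writeup is characteristic: since $\mathrm{char}(k) \neq 2$, the equation $a^2 - mb^2 = 1$ behaves as expected with respect to the square-class analysis (in particular, $\pm 1$ are distinct), so the two cases above exhaust the possibilities and no additional degenerations occur.
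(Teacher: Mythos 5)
Your proposal is correct, and it is worth noting that it does not parallel an argument in the paper at all: the paper imports Theorem \ref{symspfinite} from \cite{Buelletal} without proof, so your counting argument is a genuinely independent, self-contained derivation. The logic is sound: $Q^\tau \subseteq \tildeQ$ by Lemma \ref{QinQtilde}, both sets are finite, and your two counts agree. Indeed $|Q^\tau| = |G_k|/|H^\tau_k|$ because the fibers of $g \mapsto g\tau(g)^{-1}$ are exactly the left cosets $gH^\tau_k$; with $|G_k| = q(q^2-1)$ and $|H^{\tau_m}_k| = q-1$ (for $m$ a square, via the change of variables $(a,b)\mapsto(a+nb,a-nb)$, bijective since $\operatorname{char} k \neq 2$) or $q+1$ (for $m$ a non-square, the norm-one torus of $\mathbb F_{q^2}/\mathbb F_q$), this gives $q(q+1)$ and $q(q-1)$ respectively, matching your direct count of triples $(a,b,c)$ with $ac = 1-mb^2$, namely $(q-2)(q-1)+2(2q-1) = q^2+q$ and $q(q-1)$. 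One small point you should make explicit is the reduction ``without loss of generality $\tau \simeq \tau_m$'': if $\tau' = \chi\tau\chi^{-1}$ with $\chi \in \Aut(G,G_k)$, then $Q^{\tau'} = \chi(Q^\tau)$ and $\widetilde{Q^{\tau'}} = \chi(\tildeQ)$, so equality for $\tau_m$ transfers to any involution in its isomorphy class; this is a one-line check but it is what licenses the reduction. Compared with the constructive strategy the paper uses for $\bar k$, $\mathbb R$, $\mathbb Q$, and $\mathbb Q_p$ (explicit tables of $g$ with $g\tau(g)^{-1}=q$, and only for $\tau_1$), your argument is non-constructive but treats both square classes of $m$ uniformly, which is exactly the extra strength the finite-field statement needs.
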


\subsection{Relationship between the symmetric and extended symmetric spaces for $G$ defined over a $\frak p$-adic field}
\hfill

We now consider $G$ defined over $k=\mathbb Q_p$ with the involution $\tau=\tau_1$. For $q=\left(\begin{smallmatrix} a & b \\ -b & c \end{smallmatrix}\right)\in \tildeQ$, we must show there exists $g\in G_{\mathbb Q_p}$ such that $q=g\tau(g)^{-1}$. To find such $g\in G_{\mathbb Q_p}$, we solve for $x,y,z,w\in \mathbb Q_p$ such that the following equations hold.
\begin{equation}\label{det} 
xw-yz=1
\end{equation}
\begin{equation}\label{a}
x^2-z^2=a
\end{equation}
\begin{equation}\label{c}
w^2-z^2=c
\end{equation}
\begin{equation}\label{b}
yw-xz=b
\end{equation}

Using Hilbert's symbol, we obtain solutions to \eqref{a} and (\ref{c}) in $\mathbb Q_p$. 
\begin{defn} For $a,b\in \mathbb Q_p$, the Hilbert symbol is defined as
\[(a,b)_p=\left\{\begin{array}{lr} 1 & ax^2+by^2-z^2 \text{ is isotropic}
\\ -1 & \text{ otherwise }
\end{array} \right.\]
\end{defn}

\begin{remark}
The polynomial $ax^2+bx^2-z^2$ is isotropic over $\mathbb Q_p$ if there exists non-trivial $(x,y,z)$ in $\mathbb Q_p^3$ such that $ax^2+bx^2-z^2=0$.

\end{remark}

\begin{prop}[Properties of the Hilbert Symbol]\label{Hilbertsymbolprops}
\hfill

For all $a,b \in \mathbb Q_p$, we have

\begin{enumerate}
\item $(a,b)_p=(b,a)_p$
\item $(a,b)_p=1$ if $a\in (\mathbb Q_p^*)^2$
\item $(a,-a)_p=1$
\end{enumerate}
\end{prop}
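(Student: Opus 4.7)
The plan is to prove each item directly from the definition by exhibiting an explicit nontrivial zero of the relevant ternary form $ax^2+by^2-z^2$ over $\mathbb Q_p$. All three statements reduce to elementary manipulations, so there is no serious obstacle; the main subtlety is simply to ensure that each zero one writes down is nontrivial, as required for isotropy.

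For item (1), the symmetry $(a,b)_p=(b,a)_p$, I observe that the substitution $(x,y,z)\mapsto(y,x,z)$ is a bijection of $\mathbb Q_p^3$ onto itself that carries nontrivial zeros of $ax^2+by^2-z^2$ to nontrivial zeros of $bx^2+ay^2-z^2$ and vice versa. Hence the two forms are simultaneously isotropic or simultaneously anisotropic, which forces the two Hilbert symbols to coincide.

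For item (2), suppose $a\in(\mathbb Q_p^*)^2$ and write $a=c^2$ with $c\in\mathbb Q_p^*$. Then the triple $(x,y,z)=(1,0,c)$ is a nontrivial element of $\mathbb Q_p^3$ satisfying $a\cdot 1+b\cdot 0-c^2=c^2-c^2=0$, so $ax^2+by^2-z^2$ is isotropic and $(a,b)_p=1$. This argument does not use anything about $b$, so it works for any $b\in\mathbb Q_p$.

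For item (3), the form associated with $(a,-a)_p$ is $ax^2-ay^2-z^2$, and the triple $(x,y,z)=(1,1,0)$ is nontrivial with $a-a-0=0$. Thus $(a,-a)_p=1$. As this outline makes clear, no special feature of the $\mathfrak p$-adic field is needed; the same arguments prove these three properties over any field of characteristic not $2$.
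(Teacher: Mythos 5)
Your three verifications are correct: swapping the roles of $x$ and $y$ identifies the nontrivial zeros of $ax^2+by^2-z^2$ with those of $bx^2+ay^2-z^2$, the vector $(1,0,c)$ with $a=c^2$ witnesses isotropy in item (2), and $(1,1,0)$ witnesses isotropy of $ax^2-ay^2-z^2$ in item (3); each witness is visibly nonzero, which is the only point requiring care. For comparison: the paper does not prove this proposition at all --- it is quoted as a standard package of properties of the Hilbert symbol (the kind of statement one cites from a reference such as Serre) and is then applied to equations (4.6)--(4.9). So your direct construction of isotropic vectors from the paper's own definition is not so much a different route as a self-contained filling-in of an omitted standard argument, and your closing observation is apt: none of the three properties uses anything $\mathfrak p$-adic, so they hold verbatim over any field of characteristic not $2$ with the same definition. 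One small caveat worth stating explicitly: the Hilbert symbol is normally defined only for $a,b\in\mathbb Q_p^*$ (the paper's ``for all $a,b\in\mathbb Q_p$'' is loose), so in item (3) you should tacitly assume $a\neq 0$; under the paper's literal isotropy definition the case $a=0$ is harmless anyway, since $-z^2$ vanishes on $(1,0,0)$.
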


The Hilbert symbol shows that \eqref{a} has a solution because the equivalent equation \eqref{hilbertequation}, is isotropic over $k=\mathbb Q_p$.
\begin{equation}\label{hilbertequation} x^2-z^2-y^2a=\frac{1}{a}x^2+\left(-\frac{1}{a}\right)z^2-y^2\end{equation}

By Proposition \ref{Hilbertsymbolprops}, $(\frac{1}{a},-\frac{1}{a})_p=1$. Therefore \eqref{a} has  a non-trivial solution $(x,z,y)$ in $\mathbb Q_p^3$. We then scale our solution such that $y=1$. Similar calculations show \eqref{b} has a solution.

For a simultaneous solution to (\ref{det})-(\ref{b}), let 
\[x=\frac{-b+\alpha w}{\beta}, y=\alpha, z=\beta\]
where \[\alpha=\frac{wb \pm \sqrt{w^2-c}}{c}, \beta=\pm \sqrt{w^2-c}.\]

For $\alpha, \beta \in \mathbb Q_p$, we must verify there exists $\beta \in \mathbb Q_p$ for all $c\in \mathbb Q_p$. 
\begin{equation}\label{beta}
\beta^2=w^2-c
\end{equation}

Using the Hilbert symbol, (\ref{beta}) corresponds to $(1,-1)_p=1$ and therefore $\beta\in \mathbb Q_p$ exists.
Depending on the value of $c$, choose $g\in G_{\mathbb Q_p}$ according to Table \ref{Qp}. Choosing the appropriate $g$ will yield $g\tau(g)^{-1}=q\in Q^\tau$. Because the reverse containment is clear by Lemma \ref{QinQtilde}, we have the following result.

\begin{table}
\caption{$g\in G_{\mathbb Q_p}$ such that $q=g\tau(g)^{-1}\in Q^\tau$ for $k=\mathbb Q_p$}
\centering
\begin{tabular}{| c |c |c| }
\hline
  $c$ & $b$ & $g\in G_{\mathbb Q_p}$ \\
  \hline
  $c\neq 0$ & - & $\begin{pmatrix} \frac{-bc+w^2b+w\sqrt{w^2-c}}{c\sqrt{w^2-c}} & \frac{wb+\sqrt{w^2-c}}{c} \\ \sqrt{w^2-c} & w \end{pmatrix}$\\
  \hline
  $c=0$ & b=1 & $\begin{pmatrix} -\frac{1}{2} & -\frac{1}{2} \\ 1 & -1 \end{pmatrix}$ \\
  \hline
  $c=0$ & $b=-1$ & $\begin{pmatrix} -1 & 1 \\ -\frac{1}{2} & -\frac{1}{2} \end{pmatrix}$\\
  \hline
\end{tabular}
\label{Qp}
\end{table}

\begin{thrm}
Let $G$ be defined over $k=\mathbb Q_p$ with $p\neq 2$, and $\tau=\tau_1$ the involution of $G$. Then the extended symmetric space is equivalent to the symmetric space.\end{thrm}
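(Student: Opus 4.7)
The plan is to establish $\tildeQ = Q^\tau$ with $\tau = \tau_1$ by combining Lemma \ref{QinQtilde}, which already gives $Q^\tau \subset \tildeQ$, with an explicit construction proving the reverse containment. Given $q = \left(\begin{smallmatrix} a & b \\ -b & c \end{smallmatrix}\right) \in \tildeQ$ (so $ac + b^2 = 1$), the task reduces to producing some $g \in G_{\mathbb{Q}_p}$ with $g \tau_1(g)^{-1} = q$.

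First I would expand the equation $g\tau_1(g)^{-1} = q$ entrywise for $g = \left(\begin{smallmatrix} x & y \\ z & w \end{smallmatrix}\right) \in \SL_2(\mathbb{Q}_p)$ to obtain the four polynomial relations \eqref{det}--\eqref{b}. The crux is then to produce a simultaneous $\mathbb{Q}_p$-rational solution. For $c \neq 0$, I would choose $w \in \mathbb{Q}_p$ freely, set $z = \sqrt{w^2 - c}$, and back-solve \eqref{b} and \eqref{det} for $y$ and $x$ in closed form. The existence of $\sqrt{w^2 - c}$ is the one nontrivial point and follows from the Hilbert symbol identity $(1,-1)_p = 1$ via Proposition \ref{Hilbertsymbolprops}; similar Hilbert-symbol reasoning applied to the form in \eqref{hilbertequation} disposes of \eqref{a}. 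This yields the matrix in the first row of Table \ref{Qp}, and $g \tau_1(g)^{-1} = q$ can then be checked by direct multiplication.

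For the degenerate case $c = 0$, the condition $ac + b^2 = 1$ collapses to $b^2 = 1$, forcing $b = \pm 1$. I would handle the two subcases separately by exhibiting the ad hoc matrices in rows two and three of Table \ref{Qp} and verifying by hand that each satisfies $g\tau_1(g)^{-1} = q$. The main obstacle in the whole argument is precisely this degenerate case: the generic parameterization via $\alpha = (wb \pm \sqrt{w^2-c})/c$ becomes singular at $c = 0$, so specialization cannot produce the matrices, and one must guess-and-check candidates. Once this is dispatched, combining all three rows of Table \ref{Qp} gives $\tildeQ \subset Q^\tau$, and together with Lemma \ref{QinQtilde} this completes the proof.
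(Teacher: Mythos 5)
Your proposal follows essentially the same route as the paper: reduce to the system \eqref{det}--\eqref{b}, use the Hilbert symbol computations (in particular $(1,-1)_p=1$) to guarantee $\sqrt{w^2-c}\in\mathbb Q_p$, back-solve for $x,y,z$ to get the first row of Table \ref{Qp}, treat $c=0$, $b=\pm1$ by the explicit matrices in the remaining rows, and finish with Lemma \ref{QinQtilde}. The only small caveat is that $w$ is not chosen \emph{freely} but so that $w^2-c$ is a square, which is exactly what the isotropy argument behind \eqref{beta} provides, so the substance matches the paper's proof.
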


\begin{note}By Corollary \ref{numberofinvs}, there are four classes of involutions of $\Aut(G,G_{\mathbb Q_p})$ because $\left\vert \mathbb Q_p^* / (\mathbb Q_p^*)^2\right \vert=4$. These classes are represented by $\{\tau_1, \tau_p,\tau_{N_p}, \tau_{pN_p}\}$, where $N_p$ is a non-square in $\mathbb Q_p$ not in the same square class as $p$. For $\tau=\tau_m$ with $m\neq 1$, we can show by example that the extended symmetric space and symmetric space are not equivalent.

For each involution, we consider the cases when $p\equiv 1 \mod 4$ and $p\equiv 3 \mod 4$ separately. The key difference is that, as in the finite fields, $-1$ is a square when $p\equiv1\mod 4$ and is $-1$ not a square when $p\equiv 3 \mod 4$. When $p\equiv 1 \mod  4$, we use the square class representatives $\mathbb Q^*_p/(\mathbb Q_p^*)^2=\{1,p,N_p,pN_p\}$. When $p\equiv 3\mod 4$, we use the square class representatives $\mathbb Q_p^*/(\mathbb Q_p^*)^2=\{1,p,-1,-p\}$. 
\end{note}

\begin{ex}\label{mp_p3}
Let $p=3$ and consider the involution $\tau=\tau_3$. The extended symmetric space is determined as follows.
 \[\tildeQ=\left\{ \begin{pmatrix} a & b \\ -3b & c\end{pmatrix}\bigg \vert a,b, c \in \mathbb Q_p \text{ and }ac+3b^2=1 \right\}\]

Let $q=\left(\begin{smallmatrix} \frac{1}{3} & 0 \\ 0 & 3 \end{smallmatrix}\right) \in \tildeQ$. 
Then $g=g\tau(g)^{-1}$ implies $g$ is $g_1$ or $g_2$, where $\alpha=\sqrt{3d^2-9}$.
\[g_1=\begin{pmatrix} \frac{\sqrt{3}}{3} & 0 \\ 0 & \sqrt{3} \end{pmatrix}, \quad g_2=\begin{pmatrix} \frac{1}{3}d & \pm\frac{1}{9} \alpha \\ \pm\alpha &d\end{pmatrix}\]
Because $3\not\in(\mathbb Q_3^*)^2$, $g_1 \not\in G_{\mathbb Q_p}$.
 For $g_2$, $\alpha=\sqrt{3d^2-9}=\sqrt{3}\sqrt{d^2-3}\in \mathbb Q_p$ if only if $d^2-3=3$, which implies $d=\sqrt{6}$. Because $6$ is in the square class $pN_p=-3$ and is not a square, $d\not\in \mathbb Q_p$.
Thus there is no $g\in G_{\mathbb Q_p}$ such that $q=g\tau(g)^{-1}$ and the symmetric space and extended symmetric space are not equivalent for the involution $\tau=\tau_p$ when $p\equiv 3\mod 4.$\end{ex}

\begin{remark} Using similar calculations, one can show there exists $q\in \tildeQ$ such that $q\not \in Q^\tau$ in the following cases: $\tau=\tau_p$ with $p\equiv 1 \mod 4, \tau=\tau_{N_p}$ with $p\equiv 1 \mod 4$ and $p\equiv 3 \mod 4,$ and $\tau=\tau_{pN_p}$ with $p\equiv 1 \mod 4$ and $p\equiv 3 \mod 4$. It is left to the reader to find the specific example.  \end{remark} 

These examples serve to show for $G$ defined over $k=\mathbb Q_p$ with the involution $\tau$, $\tildeQ=Q^\tau$ only when $\tau=\tau_1$.

\begin{thrm}[Strong Hasse Principle]\label{hasse}

Let $f$ be a regular quadratic form over $\mathbb Q$. Then $f$ is isotropic over $\mathbb Q$ if and only if $f$ is isotropic over $\mathbb Q_p$ for all $p$, including $p=\infty$. 

\end{thrm}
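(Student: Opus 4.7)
The forward direction is immediate: since $\mathbb{Q}$ embeds into every completion $\mathbb{Q}_v$ (with $v$ ranging over the rational primes and $v = \infty$), any nontrivial rational zero of $f$ is also a zero over each $\mathbb{Q}_v$. All the work lies in the converse, and the plan is to induct on $n$, the number of variables in $f$, after first diagonalizing $f$ over $\mathbb{Q}$, which is possible since $\operatorname{char}\mathbb{Q} \neq 2$.

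For $n = 1$, the form $a_1 x_1^2$ is isotropic only when $a_1 = 0$, and the local hypothesis already forces this. For $n = 2$, a form $a_1 x_1^2 + a_2 x_2^2$ is isotropic over a field $K$ (of characteristic not $2$) if and only if $-a_1/a_2 \in (K^*)^2$, so the global conclusion follows from the injectivity of the natural map $\mathbb{Q}^*/(\mathbb{Q}^*)^2 \hookrightarrow \prod_v \mathbb{Q}_v^*/(\mathbb{Q}_v^*)^2$, which one verifies directly using unique factorization in $\mathbb{Z}$ together with the known structure of squares in each $\mathbb{Q}_p$ and in $\mathbb{R}$.

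The case $n = 3$ is the substantive core (Legendre's theorem). After rescaling and clearing denominators, I would reduce to showing that $ax^2 + by^2 = z^2$, with $a,b$ squarefree nonzero integers, has a nontrivial rational solution whenever it is everywhere locally solvable. The crucial ingredient is Dirichlet's theorem on primes in arithmetic progressions: one uses the local hypotheses to extract congruence conditions ensuring that $a$ is a square modulo $|b|$ and $b$ a square modulo $|a|$, then produces an auxiliary prime in a suitable residue class to execute either an Aubry--Thue-style lattice-shrinking argument or Legendre's classical descent. This is the step I expect to be the main obstacle, as it is the only place where genuine analytic number theory enters.

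For $n = 4$, I would invoke Hilbert reciprocity, $\prod_v (a,b)_v = 1$, to glue the local isotropy data of the two binary subforms $a_1 x_1^2 + a_2 x_2^2$ and $-(a_3 x_3^2 + a_4 x_4^2)$ into a common rational value represented by both, thereby reducing to the $n=2$ case and producing a global zero of $f$. Finally, for $n \geq 5$ I would use the standard fact that every nondegenerate quadratic form in at least five variables over $\mathbb{Q}_p$ (finite $p$) is automatically isotropic, so the only effective local hypothesis is at $\mathbb{R}$; combined with weak approximation and the previously established $n-2$ case, one locates a rational value simultaneously represented by a binary subform of $f$ and by its complementary $(n-2)$-ary subform, which exhibits a hyperbolic plane inside $f$ and closes the induction.
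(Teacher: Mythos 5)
The paper never proves Theorem \ref{hasse}: it is the classical Hasse--Minkowski theorem, quoted as a known result and used only to transfer the simultaneous solvability of \eqref{det}--\eqref{b} from $\mathbb{R}$ and the $\mathbb{Q}_p$'s down to $\mathbb{Q}$ in Theorem \ref{rational}. So there is no internal argument to compare with; what you have written is the standard textbook proof (essentially Serre's \emph{A Course in Arithmetic}), and as a strategy it is the right one: diagonalize, handle $n=1,2$ via the fact that a rational number that is a square in every completion is a square in $\mathbb{Q}$, prove the ternary case (Legendre), glue the two binary pieces in the quaternary case, and for $n\ge 5$ use that every nondegenerate form in at least five variables is isotropic over each finite $\mathbb{Q}_p$, together with weak approximation and induction. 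Two caveats. First, the entire weight of the theorem lies in the steps you only announce: the $n=3$ case, where Dirichlet's theorem on primes in arithmetic progressions (or a genuine descent \`a la Legendre/Aubry--Thue) must actually be executed, and the closely related existence of rational numbers with prescribed Hilbert symbols; as written these are plans, not proofs. Second, in the $n=4$ case the reduction is not ``to the $n=2$ case'': after using the product formula and the existence theorem to produce a rational $c$ represented everywhere locally by both binary pieces, the assertion that each binary form then represents $c$ over $\mathbb{Q}$ is exactly the isotropy over $\mathbb{Q}$ of a ternary form, i.e.\ it invokes the already-established $n=3$ case, not merely the square-class injectivity used for $n=2$. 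With those steps filled in, your outline is a correct and complete route; for the purposes of this paper, however, the statement is simply a citation, and a reference to Serre or O'Meara is all the exposition requires.
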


By Theorem \ref{hasse}, \eqref{det}-\eqref{b} have a simultaneous solution over $k=\mathbb R=\mathbb Q_{\infty}$. 
Applying Theorem \ref{hasse}, \eqref{det}-\eqref{b} must also have a solution over $k=\mathbb Q$, yielding the following result.

 \begin{thrm}\label{rational}

 Let $G$ be defined over $k=\mathbb Q$ and $\tau=\tau_1$ the involution of $G$. Then the extended symmetric space is equivalent to the symmetric space.  \end{thrm}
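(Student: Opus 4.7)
The plan is to transfer solvability from the local completions of $\mathbb{Q}$ (the reals and the $p$-adics) up to $\mathbb{Q}$ itself, via the Hasse--Minkowski principle stated in Theorem \ref{hasse}. Given $q = \begin{pmatrix} a & b \\ -b & c \end{pmatrix} \in \tildeQ$ with $a,b,c \in \mathbb{Q}$ and $ac + b^2 = 1$, I would seek $g = \begin{pmatrix} x & y \\ z & w \end{pmatrix} \in G_{\mathbb{Q}}$ satisfying $g\tau(g)^{-1} = q$. As derived in the $\mathbb{Q}_p$ discussion preceding the theorem, this is equivalent to solving the system \eqref{det}--\eqref{b} over $\mathbb{Q}$, and the reverse containment is immediate from Lemma \ref{QinQtilde}.

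First I would record that the system already has a solution at every completion of $\mathbb{Q}$: Theorem \ref{real} yields a real solution (i.e.\ over $\mathbb{Q}_\infty$), and the preceding theorem yields a solution over each $\mathbb{Q}_p$ with $p \neq 2$. For $p = 2$ I would rerun the Hilbert-symbol computation used in the $\mathbb{Q}_p$ case and observe that the only arithmetic input required is $(1,-1)_p = 1$, which holds at every prime (including $p = 2$) by Proposition \ref{Hilbertsymbolprops}(3). Hence local solvability is available everywhere.

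Next comes the substantive step: rewriting simultaneous solvability of \eqref{det}--\eqref{b} as the isotropy of a single regular quadratic form, so that Theorem \ref{hasse} applies directly. Following the $\mathbb{Q}_p$ parametrization $z = \beta$, $y = \alpha$, $x = (-b + \alpha w)/\beta$, the four equations collapse, after elimination, to the single scalar condition $\beta^2 = w^2 - c$. This is the isotropy condition for the regular ternary form $w^2 - \beta^2 - c\,t^2$, whose local isotropy has already been verified above. By Theorem \ref{hasse}, the form is isotropic over $\mathbb{Q}$, producing rational $w$ and $\beta$ from which $\alpha$, and hence $x, y, z$, are recovered as rational expressions in $w, \beta, b, c$.

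The main obstacle is not the invocation of Hasse but the bookkeeping required to guarantee that the parametrization outputs a genuine element of $G_{\mathbb{Q}}$: we need $\beta \neq 0$ and (for $\alpha$ to be defined) $c \neq 0$. The degenerate subcases $c = 0$ with $b = \pm 1$ therefore fall outside the parametrization and must be dispatched by hand, by exhibiting explicit rational matrices analogous to the $c = 0$ rows of Table \ref{Qp}; since those matrices have manifestly rational entries, no further arithmetic input is required. Once these finite edge cases are separated, the Hasse principle delivers the generic case, completing the proof.
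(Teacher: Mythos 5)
Your proposal is correct and follows essentially the same route as the paper: reduce membership of $q$ in $Q^\tau$ to the system \eqref{det}--\eqref{b}, use the parametrization that collapses it to $\beta^2=w^2-c$, verify local solvability at every completion, and invoke Theorem \ref{hasse} to descend to $\mathbb{Q}$, with Lemma \ref{QinQtilde} giving the reverse containment. In fact you are somewhat more careful than the paper's brief argument, since you explicitly cover the place $p=2$ and the degenerate cases $c=0$, $b=\pm1$.
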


\begin{cor}\label{tau1}

For $G$ defined over $k$ and $\tau=\tau_1$ the involution of $G$, the extended symmetric space and symmetric space are equivalent over the following fields:
\begin{enumerate}[label=\roman*.]
\item $k=\bar k$
\item $k=\mathbb R$
\item $k=\mathbb F_q$
\item $k=\mathbb Q_p$
\item $k=\mathbb Q$
\end{enumerate}
\end{cor}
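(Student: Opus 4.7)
The statement is a corollary that essentially bundles together the results proved throughout Section \ref{structure}, so my plan is to treat it as a direct enumeration: for each field listed, cite the theorem in the preceding pages that handles exactly that case. No new machinery should be needed, since each individual equivalence $\widetilde{Q^{\tau_1}} = Q^{\tau_1}$ has already been established.

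Concretely, I would walk through the five items in order. For $k = \bar k$, I invoke Theorem \ref{kbartheorem}, which constructs an explicit $g \in G$ with $g\tau(g)^{-1} = q$ for each $q \in \widetilde{Q^{\tau_1}}$ via the case analysis in Table \ref{kbar}. For $k = \mathbb{R}$, the corresponding statement is Theorem \ref{real}, relying on Table \ref{Rtable}. For $k = \mathbb{F}_q$ (with characteristic not $2$), this is Theorem \ref{symspfinite}, which actually handles every involution, not just $\tau_1$. For $k = \mathbb{Q}_p$ with $p \neq 2$, the relevant statement is the theorem proved immediately after the Hilbert symbol discussion, whose proof uses the Hilbert symbol to guarantee simultaneous solubility of equations \eqref{det}--\eqref{b} and then exhibits $g$ explicitly via Table \ref{Qp}. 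Finally, for $k = \mathbb{Q}$, Theorem \ref{rational} gives the result via the Strong Hasse Principle (Theorem \ref{hasse}) applied to the system \eqref{det}--\eqref{b}.

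In all five cases the reverse containment $Q^{\tau_1} \subseteq \widetilde{Q^{\tau_1}}$ is automatic from Lemma \ref{QinQtilde}, so nothing further is needed on that side. There is no real obstacle here: the genuine work was carried out in the theorems being cited, and the corollary only asserts that the conclusion holds simultaneously over the enumerated list of fields. The only stylistic choice is whether to write the proof as a bare list of citations or to briefly remind the reader which tool does the heavy lifting in each case (explicit constructions for $\bar k$ and $\mathbb{R}$, counting/classification for $\mathbb{F}_q$, the Hilbert symbol for $\mathbb{Q}_p$, and Hasse--Minkowski for $\mathbb{Q}$). I would opt for the latter, since it makes the unifying theme of the section visible in a single place.
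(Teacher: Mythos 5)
Your proposal is correct and matches the paper exactly: the corollary is simply a summary of Theorems \ref{kbartheorem}, \ref{real}, \ref{symspfinite}, the $\mathbb{Q}_p$ theorem, and Theorem \ref{rational}, with the containment $Q^\tau \subset \tildeQ$ coming from Lemma \ref{QinQtilde}, and the paper gives no further argument beyond these citations.
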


\begin{cor}
For $G$ defined over a field listed in Corollary \ref{tau1} with the involution $\tau=\tau_1$ or $G$ defined over $k=\mathbb F_q$ with any involution $\tau$, the decomposition in Theorem \ref{HQR} can be simplified to $G_k=H_kQU_k$. 

\end{cor}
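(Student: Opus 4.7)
The plan is to obtain this corollary as an essentially immediate consequence of Theorem \ref{HQR} combined with the equalities $\widetilde{Q^\tau} = Q^\tau$ that have already been established under the stated hypotheses. No new computation with matrices is required; the work is just to assemble the previous results.

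First, I would invoke Theorem \ref{HQR}, which holds for $G = \SL_2(\bar k)$ over an arbitrary field $k$ of characteristic not $2$ and any involution $\tau$, to write
\[
G_k \;=\; H_k^\tau\, \widetilde{Q^\tau}\, U_k.
\]
Next, I would split into the two cases covered by the hypothesis. In the first case, $k$ is one of $\bar k$, $\mathbb R$, $\mathbb F_q$, $\mathbb Q_p$, or $\mathbb Q$ and $\tau = \tau_1$; Corollary \ref{tau1} then gives $\widetilde{Q^\tau} = Q^\tau$. In the second case, $k = \mathbb F_q$ with characteristic not $2$ and $\tau$ an arbitrary involution; Theorem \ref{symspfinite} supplies the equality $\widetilde{Q^\tau} = Q^\tau$.

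In either situation, substituting $Q^\tau$ for $\widetilde{Q^\tau}$ in the factorization above yields
\[
G_k \;=\; H_k^\tau\, Q^\tau\, U_k,
\]
which is the desired simplified decomposition. Because $Q^\tau \subset \widetilde{Q^\tau}$ by Lemma \ref{QinQtilde}, and $H_k^\tau Q^\tau U_k \subset G_k$ is automatic, no further verification of containments is needed.

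The only potential obstacle is purely bookkeeping: one must check that the hypotheses of Theorem \ref{HQR}, Corollary \ref{tau1}, and Theorem \ref{symspfinite} are simultaneously met for the fields and involutions listed in the statement. Since each of these results has already been proved under exactly the stated conditions, the argument collapses to a direct citation, and there is no genuine mathematical difficulty to overcome here.
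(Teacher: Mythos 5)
Your argument is correct and is exactly the route the paper intends: the corollary follows immediately by substituting $Q^\tau$ for $\widetilde{Q^\tau}$ in Theorem \ref{HQR}, using Corollary \ref{tau1} for the listed fields with $\tau=\tau_1$ and Theorem \ref{symspfinite} for $k=\mathbb F_q$ with an arbitrary involution. No further comment is needed.
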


\subsection{Semisimplicity of the symmetric and extended symmetric spaces}
\hfill

From \cite{helmwang} and \cite{beunpaper}, respectively, we have the following results. 
\begin{thrm}\label{helmwang}
 Let $k$ be a field with characteristic zero. If $H^\tau$ is $k$-anisotropic, then the symmetric space  consists of semisimple elements. 
\end{thrm}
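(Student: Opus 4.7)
My plan is to combine the uniqueness of the Jordan decomposition with a $\tau$-equivariant Jacobson--Morozov construction to produce a $k$-split torus in $H^\tau$ whenever $Q^\tau$ contains a non-semisimple element. First I would take an arbitrary $q = g\tau(g)^{-1} \in Q^\tau$, so that $\tau(q) = q^{-1}$. Writing the Jordan decomposition $q = q_s q_u$ and applying $\tau$, the product $\tau(q_s)\tau(q_u)$ is again a commuting semisimple-unipotent decomposition, this time of $\tau(q) = q_s^{-1} q_u^{-1}$. Uniqueness of the Jordan decomposition then forces $\tau(q_s) = q_s^{-1}$ and $\tau(q_u) = q_u^{-1}$, so both factors are $\tau$-split.

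Next I would show that a non-trivial $\tau$-split unipotent element forces $H^\tau$ to be $k$-isotropic, which is what will yield the contradiction. Because $\mathrm{char}(k) = 0$, I can write $q_u = \exp(X)$ with $X \in \mathfrak{g}$ a non-zero nilpotent; the identity $\tau(q_u) = q_u^{-1}$ then reads $d\tau(X) = -X$, placing $X$ in the $(-1)$-eigenspace $\mathfrak{q}$ of $d\tau$. I would then invoke a $\tau$-equivariant refinement of Jacobson--Morozov, valid in characteristic zero, to embed $X$ in an $\mathfrak{sl}_2$-triple $(X, H, Y)$ with $H$ in the $(+1)$-eigenspace $\mathfrak{h} = \mathrm{Lie}(H^\tau)$ and $Y \in \mathfrak{q}$. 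The element $H$ is non-zero and semisimple with integer eigenvalues on $\mathfrak{g}$, so $t \mapsto \exp(tH)$ yields a split copy of $\mathbb{G}_m$ inside $H^\tau$, contradicting $k$-anisotropy; this forces $q_u = \Id$ and hence $q = q_s$ is semisimple.

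The main obstacle I expect is the $k$-rationality of the construction: an element of $Q^\tau$ lives a priori in $\mathcal{G}(\bar k)$, so neither $X$ nor the triple produced by Jacobson--Morozov is automatically defined over $k$. I would handle this by first reducing to the $k$-rational case (the Jordan decomposition is Galois-equivariant, so non-semisimplicity can be detected on a $k$-rational element in the Galois orbit) and then applying a Galois-equivariant form of Jacobson--Morozov so that the resulting $H$, and hence the torus $\exp(tH)$, descends to $k$. The characteristic-zero hypothesis enters crucially here, both in using the $\exp$--$\log$ correspondence between unipotent elements and nilpotent elements and in the existence of the $\mathfrak{sl}_2$-triple completion; as a sanity check, the $\SL_2$ calculation in the excerpt for $\tau = \tau_m$ shows that a $\tau$-split nilpotent in $\mathfrak{sl}_2$ exists over $k$ exactly when $m \in (k^*)^2$, which is precisely the case where $H^{\tau_m}$ contains a $k$-split $\mathbb{G}_m$.
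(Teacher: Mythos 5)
This theorem is one the paper imports from \cite{helmwang} without proof, so there is no in-paper argument to compare against; your proposal has to be judged on its own. On those terms it is essentially correct, and it is the standard argument: the Jordan factors of a $\tau$-split element are themselves $\tau$-split by uniqueness of the Jordan decomposition, a nontrivial $\tau$-split unipotent gives (via $\log$, using characteristic zero) a nonzero nilpotent $X$ in the $(-1)$-eigenspace $\mathfrak{q}$ of $d\tau$, a Kostant--Rallis ``normal'' $\mathfrak{sl}_2$-triple $(X,H,Y)$ with $H\in\mathfrak{h}$, $Y\in\mathfrak{q}$ exists over any field of characteristic zero, and the resulting split one-parameter subgroup lands in $H^\tau$, contradicting $k$-anisotropy. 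Your closing consistency check against the $\SL_2$ computation ($\tau_m$-split nonzero nilpotents exist over $k$ exactly when $m\in(k^*)^2$, which by Theorem \ref{Beun} is exactly when $H^{\tau_m}$ is $k$-isotropic) is accurate.

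Two points should be tightened. First, the statement only makes sense for the $k$-rational symmetric space $\{g\tau(g)^{-1}\mid g\in G_k\}$: over $\bar k$ the conclusion is false (Lemma \ref{m1Qnotsemisimple} produces non-semisimple elements for $\tau_1$, to which every involution is $\bar k$-conjugate). Once you read it this way, your Galois-orbit reduction is unnecessary: $q$ lies in $G_k$, so in characteristic zero $q_s$, $q_u$, and $X=\log q_u$ are automatically defined over $k$, and you only need the $\tau$-equivariant Jacobson--Morozov step over $k$ itself. Second, ``$t\mapsto\exp(tH)$'' is not an algebraic cocharacter as written; the clean version is to integrate the triple to a $k$-homomorphism $\phi\colon\SL_2\to\mathcal{G}$ and take $S=\phi(\text{diagonal torus})$, a nontrivial $k$-split torus whose Lie algebra is spanned by $H$. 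Then $\tau(S)=S$ because $d\tau$ fixes $\operatorname{Lie}(S)$, and the induced automorphism of $S\simeq\mathbb{G}_m$ is either identity or inversion; since its differential fixes $H\neq 0$ (nonzero because $[H,X]=2X\neq 0$ in characteristic zero), it is the identity, so $S\subset H^\tau$ pointwise, giving the contradiction. With these adjustments the argument is complete.
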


\begin{thrm}\label{Beun} 
Let $G$ be defined over $k$ and $\tau=\tau_m$ the involution of $G$. The fixed-point group of $\tau_m$, is $k$-anisotropic if and only if $m\neq 1$. 
\end{thrm}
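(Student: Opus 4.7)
The plan is to verify each direction with an elementary computation in $H_k^{\tau_m} = \left\{\left(\begin{smallmatrix} a & b \\ mb & a \end{smallmatrix}\right) : a^2 - mb^2 = 1\right\}$ as given earlier in the paper.

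For the direction $m = 1 \Rightarrow H^{\tau_1}$ is $k$-isotropic, I would exhibit $H^{\tau_1}$ itself as a one-dimensional $k$-split torus. When $m=1$, the defining equation factors over $k$ as $(a+b)(a-b) = 1$, so the substitution $u = a + b$ yields $a = (u + u^{-1})/2$ and $b = (u - u^{-1})/2$. This gives a $k$-isomorphism
\[
\mathbb{G}_m \longrightarrow H^{\tau_1}, \qquad u \longmapsto \tfrac{1}{2}\begin{pmatrix} u + u^{-1} & u - u^{-1} \\ u - u^{-1} & u + u^{-1} \end{pmatrix},
\]
with $k$-rational inverse $\left(\begin{smallmatrix} a & b \\ b & a \end{smallmatrix}\right) \mapsto a + b$. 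Hence $H^{\tau_1}$ is itself a $k$-split torus, so it is $k$-isotropic.

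For the converse, suppose $m \notin (k^*)^2$; I would show that $H^{\tau_m}$ contains no non-trivial $k$-split torus. Since $H^{\tau_m}$ is a connected one-dimensional algebraic group (it becomes isomorphic to $\mathbb{G}_m$ over $k(\sqrt{m})$ by the analogous parametrization), any non-trivial closed $k$-split subtorus must equal $H^{\tau_m}$ itself. It therefore suffices to prove that $H^{\tau_m}$ is not $k$-split. The key calculation is that a non-central element $h = \left(\begin{smallmatrix} a & b \\ mb & a \end{smallmatrix}\right)$ with $b \neq 0$ has eigenvalues $a \pm \sqrt{m}\,b$ and corresponding eigenvectors proportional to $(1, \pm\sqrt{m})^T$. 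If $H^{\tau_m}$ were $k$-split, its characters would be $\mathrm{Gal}(\bar k / k)$-invariant, so the two weight lines of $H^{\tau_m}$ acting on the standard representation on $k^2$ would be defined over $k$. These lines must coincide with the $\bar k$-spans of $(1, \pm\sqrt{m})^T$, forcing $\sqrt{m} \in k$ and contradicting $m \notin (k^*)^2$.

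The main technical step to justify carefully is the descent of weight spaces from $\bar k^2$ to $k^2$ for a $k$-split torus. This follows from the standard fact that the characters of a $k$-split torus are Galois-invariant, so their weight spaces in any $k$-rational representation are Galois-stable and hence $k$-rational. Beyond this appeal, the argument reduces to a direct $2\times 2$ matrix calculation, so no further obstacle is anticipated.
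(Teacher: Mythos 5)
Your argument is correct, but note that the paper does not prove this statement at all: Theorem \ref{Beun} is imported from \cite{beunpaper} without proof, so there is no internal argument to match. Taken on its own merits, your proof is sound and self-contained. The forward direction is fine: factoring $a^2-b^2=(a+b)(a-b)=1$ and using $\mathrm{char}\,k\neq 2$ gives a $k$-isomorphism $\mathbb{G}_m\to H^{\tau_1}$, so $H^{\tau_1}$ is a $k$-split (maximal) torus. For the converse, your reduction is legitimate: $H^{\tau_m}$ is the centralizer of the regular semisimple element $M=\left(\begin{smallmatrix}0&1\\ m&0\end{smallmatrix}\right)$, hence a connected one-dimensional torus, so any nontrivial $k$-split subtorus would be all of $H^{\tau_m}$, and anisotropy reduces to non-splitness. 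The eigenvector computation is right: over $\bar k$ the two weight lines of the standard representation are spanned by $(1,\pm\sqrt{m})^{T}$ with distinct characters $h\mapsto a\pm b\sqrt{m}$, and the standard fact that a $k$-split torus decomposes a $k$-rational representation into weight spaces defined over $k$ (the Galois-descent phrasing is fine here since $k(\sqrt{m})/k$ is separable when $\mathrm{char}\,k\neq2$) then forces $\sqrt{m}\in k$, contradicting $m\notin(k^*)^2$. Two small remarks: first, in the paper's conventions ``$m\neq 1$'' means $m$ is not in the square class $(k^*)^2$, which is exactly the hypothesis you use, so no mismatch; second, an equivalent and slightly slicker packaging of your converse is to observe that $H^{\tau_m}$ is the norm-one torus of $k(\sqrt{m})/k$, whose character lattice carries the nontrivial Galois action $\chi\mapsto\chi^{-1}$, so it has no nonzero $k$-rational characters and is therefore $k$-anisotropic. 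Your weight-space version has the advantage of staying at the level of explicit $2\times 2$ matrices and of not relying on counting rational points, so it remains valid even over small fields such as $\mathbb{F}_3$.
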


Combing the previous two theorems, we have the following corollary. 

\begin{cor}\label{cor2}
Let $G$ be defined over a field $k$ with characteristic zero and $\tau=\tau_m$ the involution of $G$. If $m\neq 1$ then the symmetric space consists of semisimple elements. 
\end{cor}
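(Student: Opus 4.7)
The statement is an immediate consequence of the two theorems that directly precede it, so my proof proposal is essentially a bookkeeping argument with no genuine content of its own. The plan is to simply chain the two cited results together.

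First, I would invoke Theorem \ref{Beun}, which characterizes precisely when the fixed-point group $H^{\tau_m}$ is $k$-anisotropic: namely, exactly when $m \neq 1$. Since the hypothesis of the corollary assumes $m \neq 1$, this gives $H^{\tau_m}$ is $k$-anisotropic.

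Next, I would apply Theorem \ref{helmwang}, which requires two hypotheses: that the field $k$ have characteristic zero, and that the fixed-point group $H^\tau$ be $k$-anisotropic. The first is assumed in the corollary, and the second was just established. The conclusion of Theorem \ref{helmwang} is exactly what we want: the symmetric space $Q^{\tau_m}$ consists of semisimple elements.

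There is no real obstacle here, and no calculation to perform; the proof is a one-line concatenation of the two theorems. The only thing to verify is that the hypotheses match up cleanly, which they do by the explicit form of $\tau_m$ together with Theorem \ref{Beun}. If anything, the only subtlety to flag in the write-up is that the characteristic-zero hypothesis is genuinely needed because it is required by Theorem \ref{helmwang}; the analogous statement over $\mathbb{F}_q$ would need a separate argument and is not what is being claimed here.
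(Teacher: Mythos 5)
Your proposal is correct and matches the paper exactly: the corollary is stated there as an immediate combination of Theorem \ref{Beun} (giving $k$-anisotropy of $H^{\tau_m}$ when $m\neq 1$) with Theorem \ref{helmwang} (giving semisimplicity of the symmetric space under characteristic zero and $k$-anisotropy). Nothing further is needed.
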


\begin{ex}
For $G$ defined over a field $k$ with the involution $\tau_m$, the corresponding symmetric space consists of semisimple elements in the following cases:
\begin{enumerate}[label=\roman*.]
\item $k=\mathbb R$ and $m=-1$
\item $k=\mathbb F_q$ and $m=N_p$. 
\item $k=\mathbb Q_p$ and $m=p,N_p,$ or $pN_p$
\end{enumerate}
\end{ex}

\begin{remark}
To complete Theorem \ref{helmwang} and Corollary \ref{cor2} for fields of characteristic not $2$, we need to determine if the symmetric space contains only semisimple elements for the involution $\tau_m$, $m\neq 1$, over fields with prime characteristic $p$.
\end{remark}

\begin{thrm}\label{Qtildesemisimple}

Let $G$ be defined over a field $k$ and $\tau=\tau_m$ the involution of $G$. If $ m\not \in(k^*)^2$, then the extended symmetric space consists of semisimple elements. \end{thrm}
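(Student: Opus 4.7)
The plan is to exploit the explicit parametrization $q=\left(\begin{smallmatrix} a & b \\ -mb & c\end{smallmatrix}\right)$ with $a,b,c\in k$ and $ac+mb^2=1$ (the latter being just $\det q=1$) recorded in the example following the definition of $\tildeQ$. An element of $\SL_2(\bar k)$ fails to be semisimple precisely when it is conjugate to a non-trivial Jordan block, equivalently when its characteristic polynomial has a repeated root and the element is not $\pm\Id$. So the goal reduces to ruling out this Jordan case under the hypothesis $m\notin(k^*)^2$.

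First I would compute the characteristic polynomial of $q$, which works out to $\lambda^2-(a+c)\lambda+1$, with discriminant $(a+c)^2-4$. If the discriminant is nonzero then $q$ has two distinct eigenvalues in $\bar k$ and is semisimple, so I may assume $a+c=\pm 2$. Since the map $q\mapsto -q$ visibly preserves $\tildeQ$ and negates the trace, by symmetry it suffices to handle the case $a+c=2$.

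Substituting $c=2-a$ into the determinant relation $ac+mb^2=1$ collapses to the identity $(a-1)^2=mb^2$. If $b\neq 0$, then $m=\bigl((a-1)/b\bigr)^2$ is a square in $k^*$, contradicting the hypothesis on $m$. Hence $b=0$, which forces $a=1=c$, so $q=\Id$. In every admissible case $q$ is therefore semisimple.

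The only real subtlety is keeping track of the ambient field: the argument hinges on the entries of $q$ lying in $k$ (so that the ratio $(a-1)/b$ lies in $k$ and the square-class hypothesis on $m$ can be triggered). Interpreted over $\bar k$ the statement would fail, because $\sqrt{m}\in\bar k$ produces genuine non-semisimple elements of $\tildeQ$; the theorem should thus be read as a statement about the $k$-rational points of the extended symmetric space, consistent with the parametrization recorded in Section~\ref{preliminaries}. Given that reading, the proof is purely computational and does not require appealing to the deeper structural results on involutions.
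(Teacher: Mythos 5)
Your proof is correct and follows essentially the same route as the paper: reduce the repeated-eigenvalue case to $a+c=\pm 2$ via the characteristic polynomial, then use $\det q=1$ to get $(a-1)^2=mb^2$, which forces $b=0$ and $q=\pm\Id$ since $m\notin(k^*)^2$. The only cosmetic differences are your clean $q\mapsto -q$ symmetry for the trace $-2$ case (the paper just says ``similar calculations'') and your explicit remark that the statement concerns the $k$-rational points, which matches the paper's parametrization.
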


\begin{proof}
Let $q=\left(\begin{smallmatrix} a & b \\ -mb & c \end{smallmatrix}\right)\in\tildeQ$. To determine if $q$ is semisimple we analyze its eigenvalues, given in \eqref{evalues}. If $q$ has two distinct eigenvalues, then $q$ is semisimple. 
The cases of concern are when
$q$ has one eigenvalue with multiplicity $2$. 
\begin{equation}\label{evalues}\left\{\frac{1}{2}\left(a+c\pm \sqrt{c^2-2ac+a^2-4mb^2}\right)\right\}\end{equation} 
By \eqref{evalues} and $\det(q)=1$, we have a necessary and sufficient condition for $q$ to have one eigenvalue.

\begin{equation}\label{1} (a+c)^2 =4 \end{equation}

Solving \eqref{1}, $q$ has one eigenvalue if and only if $a+c=\pm 2$.  
Assume $a+c=2$, then $q=\left(\begin{smallmatrix} a & b \\ -m b & 2-a\end{smallmatrix}\right)$ and $\det(q)=1$ implies $y=\pm\frac{x-1}{\sqrt{m}}$. By assumption, $\sqrt{m}\not \in k$, which implies $a=1$ and $b=0$, i.e. $q=\Id$.
If you assume $a+c=-2$, similar calculations yield $q=-\Id$.
\end{proof}

\begin{note}
By Corollary \ref{numberofinvs}, there are two classes of involutions of $\Aut(G,G_{\mathbb F_q})$. These classes are represented by $\{\tau_1,\tau_{N_p}\}$ where $N_p$ is the smallest non-square in the field.

\end{note}

\begin{cor}\label{Qsemisimple}

Let $G$ be defined over $k$ and $\tau=\tau_m$ the involution of $G$. If $ m\not\in (k^*)^2$, then the symmetric space consists of semisimple elements. 
\end{cor}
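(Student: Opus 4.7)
The plan is to derive this corollary directly from Theorem \ref{Qtildesemisimple} together with Lemma \ref{QinQtilde}. Theorem \ref{Qtildesemisimple} already establishes that, under the hypothesis $m \not\in (k^*)^2$, every element of the extended symmetric space $\tildeQ$ is semisimple. Lemma \ref{QinQtilde} tells us that $Q^\tau \subset \tildeQ$ for any group and involution. Chaining these two facts yields the claim immediately, so no new computation is required.

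More concretely, I would proceed as follows. First, fix an arbitrary element $q \in Q^\tau$; by Lemma \ref{QinQtilde} we have $q \in \tildeQ$. Second, invoke Theorem \ref{Qtildesemisimple}, whose hypothesis $m \not\in (k^*)^2$ coincides exactly with our hypothesis, to conclude that $q$ is semisimple. Since $q$ was arbitrary, $Q^\tau$ consists entirely of semisimple elements.

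There is really no main obstacle here — this is a purely logical corollary and the substantive content lives in Theorem \ref{Qtildesemisimple}. The only thing worth being careful about is that the hypothesis on $m$ is stated identically in both results (namely $m \not\in (k^*)^2$, equivalently $\sqrt{m} \not\in k$), so the implication transfers without any additional case analysis. Writing this up should take only two or three sentences in the final text.
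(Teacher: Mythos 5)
Your proposal is correct and matches the paper's own proof exactly: the paper likewise combines Lemma \ref{QinQtilde} ($Q^\tau \subset \tildeQ$) with Theorem \ref{Qtildesemisimple} to conclude that every element of $Q^\tau$ is semisimple when $m \not\in (k^*)^2$. No further comment is needed.
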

\begin{proof}
By Lemma \ref{QinQtilde} and Theorem \ref{Qtildesemisimple}, the elements of the symmetric space must be semisimple.
\end{proof}

\begin{remark}
While combining Theorems \ref{helmwang} and \ref{Beun} proves this result for fields  with characteristic zero, our result and proof holds for any field with characteristic not $2$. 
\end{remark}

\begin{lem}\label{m1Qnotsemisimple}

Let $G$ be defined over $k$ and $\tau=\tau_1$ the involution of $G$. Then there exists elements in the symmetric space which are not semisimple. 

\end{lem}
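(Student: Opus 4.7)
The plan is to exhibit a single $g \in G_k$ whose image $q = g\tau_1(g)^{-1}$ in $Q^{\tau_1}$ has a repeated eigenvalue yet differs from $\pm\Id$; since a semisimple element of $\SL_2(\bar k)$ with a double eigenvalue must be $\pm\Id$, this forces $q$ to be non-semisimple.

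Because $\tau_1 = \Inn(M)$ for $M = \left(\begin{smallmatrix} 0 & 1 \\ 1 & 0\end{smallmatrix}\right)$ and $M^{-1} = M$, conjugation by $M$ swaps rows and columns, so for $g_t = \left(\begin{smallmatrix} 1 & t \\ 0 & 1 \end{smallmatrix}\right) \in G_k$ one finds $\tau_1(g_t) = \left(\begin{smallmatrix} 1 & 0 \\ t & 1 \end{smallmatrix}\right)$ and
\[ q_t := g_t\,\tau_1(g_t)^{-1} = \begin{pmatrix} 1-t^2 & t \\ -t & 1 \end{pmatrix} \in Q^{\tau_1}. \]
The characteristic polynomial of $q_t$ is $\lambda^2 - (2-t^2)\lambda + 1$, with discriminant $t^2(t^2-4)$, so $q_t$ has a repeated eigenvalue exactly when $t \in \{0, 2, -2\}$.

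I would then take $t = 2 \in k^*$ (nonzero since $\operatorname{char} k \neq 2$), producing $q_2 = \left(\begin{smallmatrix} -3 & 2 \\ -2 & 1\end{smallmatrix}\right)$ with trace $-2$ and determinant $1$, so its only eigenvalue is $-1$. The equality $q_2 = -\Id$ would force $-3 = -1$ in $k$, i.e., $2 = 0$, contradicting the characteristic hypothesis; even in the characteristic $3$ case the same formula still yields $q_2 = \left(\begin{smallmatrix} 0 & 2 \\ 1 & 1 \end{smallmatrix}\right) \neq -\Id = \left(\begin{smallmatrix} 2 & 0 \\ 0 & 2 \end{smallmatrix}\right)$. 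Hence $q_2 \in Q^{\tau_1}$ is conjugate over $\bar k$ to the nontrivial Jordan block $\left(\begin{smallmatrix} -1 & 1 \\ 0 & -1\end{smallmatrix}\right)$ and is not semisimple. No substantive obstacle arises; the only point requiring care is verifying that the single witness $t = 2$ works uniformly in every characteristic different from $2$, which reduces to the trivial observation that $q_2 \neq -\Id$ in all such fields.
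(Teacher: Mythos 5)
Your proof is correct and takes essentially the same route as the paper: both arguments exhibit an explicit $g\in G_k$ (yours the unipotent $\left(\begin{smallmatrix}1&2\\0&1\end{smallmatrix}\right)$, the paper's a one-parameter family $\left(\begin{smallmatrix}x+2&x+1\\-(x+1)&-x\end{smallmatrix}\right)$) and check directly that $g\tau_1(g)^{-1}$ has a repeated eigenvalue yet is not $\pm\Id$, hence is not semisimple. The only cosmetic difference is that you verify this via the trace/discriminant computation $t^2(t^2-4)$ and the observation that a semisimple element of $\SL_2$ with a double eigenvalue is $\pm\Id$, whereas the paper exhibits the nontrivial unipotent Jordan factor explicitly.
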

\begin{proof}
We will construct an element in the symmetric space with a unipotent factor.
Let $g=\left(\begin{smallmatrix} x+2 & x+1 \\ -(x+1) & -x\end{smallmatrix}\right)\in G_k$ for some $x\in k \setminus \{-1\}$. Then $q=g\tau(g)^{-1} \in Q^\tau$ has a Jordan decomposition with a unipotent factor, thus $q$ is not semisimple.

\begin{equation}g\tau(g)^{-1}=\begin{pmatrix}3+2x & 2+2x \\ -(2+2x) & -(2x+1)\end{pmatrix}=S^{-1}\begin{pmatrix} 1 & 1 \\ 0 & 1 \end{pmatrix} S\end{equation}

\end{proof}

\begin{cor}
Let $G$ be defined over $k$ and $\tau=\tau_1$ the involution of $G$. Then there exists elements in the extended symmetric space which are not semisimple. 
\end{cor}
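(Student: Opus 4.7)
The plan is to derive this corollary immediately from the preceding Lemma \ref{m1Qnotsemisimple} using the containment $Q^\tau \subset \tildeQ$ established in Lemma \ref{QinQtilde}. Since Lemma \ref{m1Qnotsemisimple} produces an explicit element of the symmetric space $Q^{\tau_1}$ whose Jordan decomposition has a nontrivial unipotent factor, and every element of $Q^{\tau_1}$ lies in $\widetilde{Q^{\tau_1}}$, the same element witnesses the existence of a non-semisimple element in the extended symmetric space.

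More concretely, I would take the element $q = g\tau(g)^{-1}$ from the proof of Lemma \ref{m1Qnotsemisimple} (for instance with $x \in k \setminus\{-1\}$ chosen so that the resulting matrix is genuinely non-semisimple), note it is automatically in $\tildeQ$ by Lemma \ref{QinQtilde}, and observe that semisimplicity is a property of the element itself (independent of which ambient space we view it as a member of). Thus its failure to be semisimple in $Q^{\tau_1}$ transfers verbatim to $\widetilde{Q^{\tau_1}}$.

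There is no real obstacle here; the only thing to be careful about is to invoke the correct prior results in the correct order and to observe that the containment $Q^\tau \subset \tildeQ$ does not require the two spaces to be equal (which, as Example \ref{CartanSL2} and the discussion in Section \ref{structure} indicate, they need not be in general, although for $\tau = \tau_1$ over many fields of interest they are equal by Corollary \ref{tau1}). If one wished to avoid referencing Lemma \ref{m1Qnotsemisimple} and give a direct construction, one could verify by hand that $q = \left(\begin{smallmatrix} 2 & 1 \\ -1 & 0 \end{smallmatrix}\right)$ lies in $\widetilde{Q^{\tau_1}}$ (since $ac + b^2 = 0 + 1 = 1$ and $\tau_1(q) = q^{-1}$), has the single eigenvalue $1$ of algebraic multiplicity $2$, and satisfies $\mathrm{rank}(q - \Id) = 1$, so is not diagonalizable and hence not semisimple. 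Either route yields the result in one short paragraph.
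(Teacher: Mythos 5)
Your proposal is correct and matches the paper's own proof, which deduces the corollary directly from Lemma \ref{QinQtilde} and Lemma \ref{m1Qnotsemisimple} in exactly the way you describe. Your optional explicit witness $q=\left(\begin{smallmatrix} 2 & 1 \\ -1 & 0 \end{smallmatrix}\right)$ also checks out, but it is not needed beyond the two-lemma argument.
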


\begin{proof}
This follows from Lemmas \ref{QinQtilde} and \ref{m1Qnotsemisimple}.
\end{proof}

\begin{cor}\label{QQtildess}
Let $G$ be defined over $k$ and $\tau=\tau_m$ the involution of $G$. Then the symmetric space and extended symmetric space consist of semisimple elements if and only if $m\not\in (k^*)^2$.
\end{cor}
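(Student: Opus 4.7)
The plan is to prove the biconditional by assembling previously established results, treating the two directions separately. Neither direction requires a genuinely new calculation.

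For the forward implication, suppose $m \notin (k^*)^2$. Theorem \ref{Qtildesemisimple} states that every element of $\tildeQ$ is then semisimple, and Corollary \ref{Qsemisimple} gives the same for $Q^\tau$ (which in any case follows immediately from the inclusion $Q^\tau \subseteq \tildeQ$ of Lemma \ref{QinQtilde}). So one direction is essentially a citation.

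For the converse, I would prove the contrapositive: if $m \in (k^*)^2$, then both $Q^{\tau_m}$ and $\tildeQ$ contain a non-semisimple element. When $m = 1$, Lemma \ref{m1Qnotsemisimple} exhibits an explicit non-semisimple element of $Q^{\tau_1}$, and the corollary following it transfers this to $\tildeQ$ for $\tau_1$. For a general square $m = n^2$ with $n \in k^*$, one reduces to the $m=1$ case using the $k$-isomorphism $\tau_m \simeq \tau_1$ promised by the remark after Corollary \ref{numberofinvs}: concretely, taking $D = \left(\begin{smallmatrix} 1 & 0 \\ 0 & n \end{smallmatrix}\right) \in \GL_2(k)$ and $\chi = \Inn(D)$, a direct (and short) matrix computation shows $\chi\tau_1\chi^{-1} = \tau_m$, where the scalar $1/n$ relating $DM_1D^{-1}$ to $M_m$ disappears inside $\Inn$. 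The intertwining identity $\chi \circ \tau_1 = \tau_m \circ \chi$ then yields $\chi(Q^{\tau_1}) = Q^{\tau_m}$ and $\chi(\tildeQ^{\tau_1}) = \tildeQ^{\tau_m}$, and since $\chi$ is an algebraic group automorphism it preserves Jordan decompositions. Thus any non-semisimple element of $Q^{\tau_1}$ (resp.\ $\tildeQ^{\tau_1}$) transports to a non-semisimple element of $Q^{\tau_m}$ (resp.\ $\tildeQ^{\tau_m}$), completing the contrapositive.

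There is no genuine obstacle here; the only step that deserves explicit verification is the compatibility of $\chi = \Inn(D)$ with the two involutions, which is a one-line computation made clean by the observation that $\Inn$ is insensitive to scaling of its defining matrix. Everything else is a direct appeal to the preceding lemmas, corollaries, and the classification of involutions by square class.
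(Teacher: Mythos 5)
Your proof is correct and follows essentially the same route as the paper, which obtains this corollary by assembling Theorem \ref{Qtildesemisimple} and Corollary \ref{Qsemisimple} for $m\not\in(k^*)^2$ with Lemma \ref{m1Qnotsemisimple} and its corollary for the square class. Your explicit reduction of a general square $m=n^2$ to $m=1$ via $\Inn\left(\begin{smallmatrix}1&0\\0&n\end{smallmatrix}\right)$ is a valid (and slightly more careful) formalization of what the paper handles purely by its convention that the square class is represented by $\tau_1$.
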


\begin{lem}\label{hahinQ}
Let $G$ be defined over $k$, $\tau$ an involution of $G$, and $A$ a $(\tau,k)$-split torus of $G$. Then the image of $A_k$ under conjugation by $H_k^\tau$ is contained in the extended symmetric space. 
\end{lem}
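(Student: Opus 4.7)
The plan is a short direct verification that conjugation by $H_k^\tau$ preserves the defining condition of the extended symmetric space. Recall that $\tildeQ = \{g \in G \mid \tau(g) = g^{-1}\}$, that $h \in H_k^\tau$ means $\tau(h) = h$, and that $A$ being $(\tau,k)$-split means in particular $\tau(a) = a^{-1}$ for every $a \in A_k$.

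First I would fix arbitrary elements $h \in H_k^\tau$ and $a \in A_k$, and consider the conjugate $hah^{-1}$. Since $\tau$ is a group automorphism, I can compute
\[
\tau(hah^{-1}) = \tau(h)\,\tau(a)\,\tau(h)^{-1}.
\]
Then I would substitute $\tau(h) = h$ (from $h \in H_k^\tau$) and $\tau(a) = a^{-1}$ (from $A$ being $\tau$-split), obtaining $\tau(hah^{-1}) = h a^{-1} h^{-1} = (hah^{-1})^{-1}$, which is exactly the condition for $hah^{-1}$ to lie in $\tildeQ$.

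I would also briefly note that $hah^{-1} \in G_k$ since both $h$ and $a$ are in $G_k$, so the conjugate lies in $G_k \cap \tildeQ$ as required. There is no real obstacle here: the content of the lemma is simply that the two defining properties (being fixed by $\tau$ and being inverted by $\tau$) interact compatibly under conjugation, and the computation above makes this explicit in one line.
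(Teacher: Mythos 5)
Your proof is correct and follows exactly the same one-line computation as the paper: $\tau(hah^{-1})=\tau(h)\tau(a)\tau(h)^{-1}=ha^{-1}h^{-1}=(hah^{-1})^{-1}$, using $\tau(h)=h$ and $\tau(a)=a^{-1}$. Nothing further is needed.
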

\begin{proof}

Let $a\in A_k$ and $h\in H_k^\tau$. Then $hah^{-1} \in \tildeQ$.

 \[\tau(hah^{-1})=\tau(h)\tau(a)\tau(h^{-1})=ha^{-1}h^{-1}=(hah^{-1})^{-1}\]
\end{proof}

\begin{thrm} 
Let $G$ be defined over $k$ and $\tau=\tau_m$ the involution of $G$. If $m\not \in (k^*)^2$ then the extended symmetric space decomposes as the disjoint union of the $H_k^\tau$-orbits of the maximal $k$-split tori $\{A_i \ \vert \ i \in I\}$.
\[\tildeQ=\bigcupdot_{i\in I} H_k^\tau\cdot ({{A}_i})_k\]
\end{thrm}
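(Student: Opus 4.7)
The plan is to establish both inclusions in the claimed decomposition and then verify disjointness, using Theorem \ref{Qtildesemisimple} and Lemma \ref{hahinQ} as the principal inputs. The forward inclusion $\bigcupdot_{i\in I} H_k^\tau\cdot(A_i)_k \subseteq \tildeQ$ is essentially immediate from Lemma \ref{hahinQ}: each $A_i$ is $(\tau,k)$-split, so its $H_k^\tau$-conjugates land inside $\tildeQ$.

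The main work lies in the reverse containment. Fix $q \in \tildeQ$. Since $m \notin (k^*)^2$, Theorem \ref{Qtildesemisimple} guarantees that $q$ is semisimple. The central elements $q = \pm\Id$ lie in every maximal torus of $G$, hence in each $(A_i)_k$, and can be set aside. For $q \neq \pm\Id$, the semisimplicity argument in the proof of Theorem \ref{Qtildesemisimple} (where $a+c = \pm2$ forced $q = \pm\Id$) shows $q$ has two distinct eigenvalues, so it is regular semisimple and $T_q := Z_G(q)$ is the unique maximal torus of $G$ containing $q$, defined over $k$. I will check that $T_q$ is $\tau$-stable via the identity $\tau(T_q) = Z_G(\tau(q)) = Z_G(q^{-1}) = T_q$, and that the induced algebraic involution $\tau|_{T_q}$ acts by inversion: it inverts the generator $q$ by assumption, and inversion is the only non-trivial algebraic involution of a connected one-dimensional torus, so by Zariski-density of $\langle q\rangle$ in $T_q$ the map $\tau|_{T_q}$ is inversion on all of $T_q$. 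Thus $T_q$ is a maximal $\tau$-split torus defined over $k$. Choosing the $\{A_i\}_{i\in I}$ as representatives of the $H_k^\tau$-conjugacy classes of such tori, there exist $h \in H_k^\tau$ and $i \in I$ with $T_q = h A_i h^{-1}$, and writing $q = h a h^{-1}$ with $a \in (A_i)_k$ places $q$ in $H_k^\tau \cdot (A_i)_k$.

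Disjointness on the non-central part of $\tildeQ$ is then formal: if $q \neq \pm\Id$ lies in both $H_k^\tau\cdot (A_i)_k$ and $H_k^\tau\cdot (A_j)_k$, then $T_q = Z_G(q)$ is an $H_k^\tau$-conjugate of each of $A_i$ and $A_j$, so by the choice of representatives $i = j$.

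The step I expect to be the main obstacle is the existence and parameterization of the representatives $\{A_i : i \in I\}$. That $T_q$ is $\tau$-stable and $\tau$-split over $k$ is clean, but ensuring that the index set $I$ genuinely captures every $H_k^\tau$-conjugacy class of maximal $\tau$-stable tori of $G$ defined over $k$ requires a careful classification that depends on the arithmetic of $k$; in particular one must justify that the tori $Z_G(q)$ arising from elements $q \in \tildeQ$ land in $H_k^\tau$-orbits of the prescribed $\{A_i\}$. A minor but worth-noting subtlety is the treatment of $\pm\Id$, which lie in every maximal torus simultaneously, so the literal disjointness of the union is understood modulo these central elements.
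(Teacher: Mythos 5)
Your proposal is correct and follows essentially the same route as the paper: one inclusion comes from Lemma \ref{hahinQ}, and the reverse containment from the semisimplicity of $\tildeQ$ when $m\notin (k^*)^2$ (Theorem \ref{Qtildesemisimple}), which places each $q\in\tildeQ$ in an $H_k^\tau$-conjugate of one of the chosen tori. Your write-up simply makes explicit what the paper's short proof leaves implicit, namely that $Z_G(q)$ is the relevant maximal torus defined over $k$, that it is $\tau$-stable with $\tau$ acting by inversion, and that disjointness holds modulo the central elements $\pm\Id$.
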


\begin{proof}
 Let $\tau=\tau_m$ with $m\not=1$.
 By Lemma \ref{hahinQ}, $H_k\cdot ({A_i})_k \subset \tildeQ$ for all $\{A_i\vert i\in I\}$. 
 For $q\in \tildeQ$, $q$ is $\tau$-split and semisimple by Corollary \ref{QQtildess}. Thus $q$ must be contained in the $H_k^\tau$-conjugacy class of some $k$-split torus ${(A_i)}_k$.
\end{proof}

\begin{cor} Let $G$ be defined over $k$ and $\tau=\tau_m$ the involution of $G$, If $m\not\in (k^*)^2$, then $G_k$ decomposes as 

$G_k=\displaystyle \bigcupdot_{i\in I} H_k^\tau {(A_i)}_kH_k^\tau U_k$, where $\{A_i \ \vert \ i\in I\}$ are the $H^\tau_k$-conjugacy classes of maximal $k$-split tori.

\end{cor}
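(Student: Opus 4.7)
The plan is to combine Theorem \ref{HQR} with the theorem immediately preceding this corollary. By Theorem \ref{HQR}, one has $G_k = H_k^\tau \tildeQ U_k$. Under the hypothesis $m \not\in (k^*)^2$, the preceding theorem provides the disjoint decomposition $\tildeQ = \bigcupdot_{i\in I} H_k^\tau \cdot (A_i)_k$, where $H_k^\tau \cdot (A_i)_k$ denotes the $H_k^\tau$-conjugation orbit of the torus $(A_i)_k$, which sits inside $\tildeQ$ by Lemma \ref{hahinQ}.

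The first step is substitution and simplification. Inserting the decomposition of $\tildeQ$ into the formula from Theorem \ref{HQR} yields
\[
G_k = H_k^\tau \Biggl(\bigcupdot_{i\in I} H_k^\tau \cdot (A_i)_k\Biggr) U_k = \bigcup_{i\in I} H_k^\tau \bigl(H_k^\tau \cdot (A_i)_k\bigr) U_k.
\]
A general element of $H_k^\tau \cdot \bigl(H_k^\tau \cdot (A_i)_k\bigr)$ has the form $h_1 (h_2 a h_2^{-1})$ with $h_1,h_2 \in H_k^\tau$ and $a \in (A_i)_k$; rewriting it as $(h_1 h_2) a h_2^{-1}$, the factors $h_1 h_2$ and $h_2^{-1}$ range independently over $H_k^\tau$, so this set equals $H_k^\tau (A_i)_k H_k^\tau$. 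This gives $G_k = \bigcup_{i \in I} H_k^\tau (A_i)_k H_k^\tau U_k$.

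The second step is the disjointness of the union. Suppose $g$ lies in $H_k^\tau (A_i)_k H_k^\tau U_k \cap H_k^\tau (A_j)_k H_k^\tau U_k$. Unwinding the simplification above, this produces factorizations $g = h \tilde q u = h' \tilde q' u'$ with $\tilde q \in H_k^\tau \cdot (A_i)_k$ and $\tilde q' \in H_k^\tau \cdot (A_j)_k$. The goal is to show $\tilde q$ and $\tilde q'$ lie in the same $H_k^\tau$-orbit, which then forces $i=j$ by the disjointness in the preceding theorem. To achieve this, I would start from $\tilde q' = (h')^{-1} h\, \tilde q\, u (u')^{-1}$, apply $\tau$ to both sides, and exploit $\tau(\tilde q)=\tilde q^{-1}$, $\tau(\tilde q')=(\tilde q')^{-1}$, together with the semisimplicity of $\tilde q$ and $\tilde q'$ guaranteed by Corollary \ref{QQtildess}, to show that the $U_k$-factor is absorbed by $H_k^\tau$-conjugation, whence $\tilde q'$ is $H_k^\tau$-conjugate to $\tilde q$.

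The substitution and set-theoretic simplification are essentially formal. The main obstacle is the disjointness verification: making rigorous the claim that the $H_k^\tau$-conjugacy class of maximal $k$-split tori selected by any factorization $g = h \tilde q u$ is an invariant of $g \in G_k$ alone. This rests on semisimplicity of $\tildeQ$ (Corollary \ref{QQtildess}) and the fact that a semisimple $\tau$-split element is contained in a unique $(\tau,k)$-split maximal torus up to $H_k^\tau$-conjugacy, together with the $\tau$-equivariance constraint that the $U_k$-factor must satisfy once it is forced to conjugate one $\tau$-split semisimple element to another.
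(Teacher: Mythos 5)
Your first step is exactly the paper's (implicit) argument: the corollary is obtained by substituting the decomposition $\tildeQ=\bigcupdot_{i\in I}H_k^\tau\cdot(A_i)_k$ of the preceding theorem into Theorem \ref{HQR} and absorbing the conjugation, $h_1(h_2ah_2^{-1})u=(h_1h_2)a h_2^{-1}u$, so that $H_k^\tau\bigl(H_k^\tau\cdot(A_i)_k\bigr)U_k=H_k^\tau(A_i)_kH_k^\tau U_k$; the paper gives no proof beyond this substitution.

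The genuine gap is your second step. It is only a plan (``I would start from\dots''), and the invariant you hope to extract does not exist, so the plan cannot be completed. Every maximal $k$-split torus contains $\pm\Id$, so every piece $H_k^\tau(A_i)_kH_k^\tau U_k$ contains $H_k^\tau U_k$; in particular all pieces contain the identity, and for $|I|\geq 2$ the union is never disjoint in the literal set-theoretic sense. Worse, under the very hypothesis $m\notin(k^*)^2$, Theorem \ref{HR} gives $G_k=H_k^\tau U_k$, so each piece is in fact all of $G_k$: any $g$ admits a factorization $g=h\cdot\Id\cdot u$ with $\Id\in(A_i)_k$ for every $i$, so the $H_k^\tau$-class of tori ``selected'' by a factorization $g=h\tilde q u$ is not determined by $g$, and no appeal to $\tau$-equivariance of the $U_k$-factor or to semisimplicity (Corollary \ref{QQtildess}) can make it so. The $\bigcupdot$ in the corollary has to be read as notation inherited from the decomposition of $\tildeQ$ itself (where the disjointness concerns the $H_k^\tau$-conjugacy classes of the tori, the orbits still sharing $\pm\Id$), not as a disjointness claim about the products with $U_k$; proving such disjointness is the wrong target. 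Dropping your second step and keeping the substitution argument yields everything the paper actually asserts.
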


\begin{notation}
Let $(Q^\tau)^{ss}$ and $(\tildeQ)^{ss}$ denote the subset of semisimple elements in the symmetric space and extended symmetric space, respectively. 
\end{notation}

\begin{lem}\label{m1semi}
Let $G$ be defined over $k$ and $\tau=\tau_1$ the involution of $G$. Then $G_k$ decomposes as $G_k=H_k^\tau(\tildeQ)^{ss}U_k$. \end{lem}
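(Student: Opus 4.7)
The plan is to refine the decomposition $G_k=H_k^\tau\tildeQ U_k$ from Theorem \ref{HQR} by showing any non-semisimple middle factor can be replaced by a semisimple one. Since $(\tildeQ)^{ss}\subseteq\tildeQ$, it suffices to prove $\tildeQ\subseteq H_k^\tau(\tildeQ)^{ss}U_k$. The semisimple $q\in\tildeQ$ are immediate; by the trace analysis in the proof of Theorem \ref{Qtildesemisimple}, the non-semisimple elements of $\tildeQ^{\tau_1}$ are exactly those with trace $\pm 2$ and $q\neq\pm\Id$, and they are parametrized as $q=\pm\left(\begin{smallmatrix} 1+\varepsilon b & b \\ -b & 1-\varepsilon b \end{smallmatrix}\right)$ for some $b\in k^*$ and $\varepsilon\in\{\pm 1\}$.

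For such a $q$, I would parametrize all candidate decompositions $q=hq'u$ by the choice of $h=\left(\begin{smallmatrix} r & s \\ s & r \end{smallmatrix}\right)\in H_k^\tau$. The condition $q'=h^{-1}qu^{-1}\in\tildeQ$ (i.e.\ that its off-diagonal entries are negatives of one another) uniquely determines the parameter $t$ of $u$ as a rational function of $(r,s,b)$, provided a certain linear form $A=A(r,s)$ appearing in the denominator does not vanish. A short computation using $r^2-s^2=1$ then reduces the trace of the resulting $q'$ to a rational expression whose numerator is a constant depending only on $b$ and $\varepsilon$, divided by $A$. In particular, the set of $h$ making $q'$ non-semisimple is cut out by $\mathrm{tr}(q')=\pm 2$.

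The next step is to solve these conditions together with $r^2-s^2=1$. Each sign case produces a quadratic equation in $r$; one root is always the trivial $r=\pm 1$ (yielding $h=\pm\Id$ and recovering the original non-semisimple decomposition), and each has at most one further non-trivial root. Hence at most four choices of $h\in H_k^\tau$ produce a non-semisimple middle factor, and any other $h$ realizes the desired decomposition $g=hq'u$ with $q'\in(\tildeQ)^{ss}$. The degenerate case $\varepsilon b=-1$, in which $A$ vanishes identically on $H_k^\tau$, is handled separately: there the $\tildeQ$-membership constraint on $u$ becomes vacuous, so one may choose any $u$ whose parameter $t$ avoids the finitely many values that push the trace to $\pm 2$.

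The principal obstacle is verifying that $H_k^\tau$ actually contains an $h$ outside this finite bad set for every non-semisimple $q$. Over infinite fields this is automatic, and over finite fields of sufficient size a counting argument comparing $|H_k^\tau|$ to the size of the bad set suffices; the truly delicate situation occurs over very small finite fields, most notably $k=\mathbb F_3$ where $H_k^\tau=\{\pm\Id\}$, which would require separate case analysis using the explicit classification of $\tildeQ$ and $(\tildeQ)^{ss}$ developed in the preceding sections.
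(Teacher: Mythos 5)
Your overall strategy is the same as the paper's: reduce via Theorem \ref{HQR} to the non-semisimple elements of $\tildeQ$, replace such a $q$ by $q'=h^{-1}qu^{-1}$, where the requirement $q'\in\tildeQ$ forces the parameter of $u$, and argue that a suitable $h$ makes $q'$ semisimple. Your version is in fact more systematic than the paper's (which works with one explicit family $\left(\begin{smallmatrix} x & x-1\\ 1-x & 2-x\end{smallmatrix}\right)$ and one explicit $u$): your parametrization of $\tildeQ\setminus(\tildeQ)^{ss}$ is complete, and the computation that $\mathrm{tr}(q')$ equals $2(1+\varepsilon b)/A$ with $A=r(1+\varepsilon b)+sb$ is correct, so the bad $h$ do lie on finitely many lines meeting the conic $r^2-s^2=1$. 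Two local corrections: when $\varepsilon b=-1$ the form $A$ equals $\mp s$, so it does not vanish identically on $H_k^\tau$; what happens instead is that the trace numerator vanishes, so every $h\neq\pm\Id$ automatically yields $\mathrm{tr}(q')=0$, hence a semisimple $q'$, while for $h=\pm\Id$ the membership constraint really is vacuous and your ``choose $t$ outside finitely many values'' fix applies. Also your count of bad $h$ should include the (at most two) $h$ with $A=0$.

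The genuine gap is the deferred small-field step, and it cannot be closed: over the smallest fields the bad set can exhaust $H_k^{\tau_1}$, and the asserted decomposition then fails. Take $k=\mathbb F_5$ and $q=\left(\begin{smallmatrix} 2 & 1\\ -1 & 0\end{smallmatrix}\right)\in\tildeQ\setminus(\tildeQ)^{ss}$. Here $H_k^{\tau_1}=\left\{\pm\Id,\ \pm\left(\begin{smallmatrix} 0 & 2\\ 2 & 0\end{smallmatrix}\right)\right\}$, and for each of these four $h$ the unique $t$ with $h^{-1}qu^{-1}\in\tildeQ$ produces, respectively, $\pm q$, $\left(\begin{smallmatrix} 2 & 4\\ 1 & 0\end{smallmatrix}\right)$, and $\left(\begin{smallmatrix} 3 & 1\\ 4 & 0\end{smallmatrix}\right)$, all of trace $\pm2$ and different from $\pm\Id$, hence not semisimple; so $q\notin H_k^\tau(\tildeQ)^{ss}U_k$. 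The same failure occurs over $\mathbb F_3$, where $H_k^{\tau_1}=\{\pm\Id\}$ forces $t=0$ and $q'=\pm q$. Thus your counting argument does prove the lemma for infinite $k$ and for $\mathbb F_q$ with $q$ large enough, but the ``separate case analysis'' you postpone for very small fields is not a formality: it is impossible as stated, and these fields must be excluded or treated as genuine exceptions. Be aware that the paper's own proof shares this defect --- its prescribed $h$ and $u$ over $\mathbb F_5$ yield exactly the non-semisimple matrices above, and its unverified claim that the result holds for $k=\mathbb F_3$ is contradicted by the same element $q$ --- so your write-up should flag the restriction on $k$ explicitly rather than mirror the paper's assertion.
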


\begin{proof}
By Theorem \ref{HQR}, it suffices to show $\tildeQ \setminus \tildeQ^{ss} \in  H_k^\tau (\tildeQ)^{ss} U_k$. 
Using the construction of $q$ as in Theorem \ref{Qtildesemisimple}, let $q=\left(\begin{smallmatrix} x & x-1 \\ 1-x & 2-x \end{smallmatrix}\right)\in \tildeQ \setminus \tildeQ^{ss}$, $x\neq 1$.

 Let $h=\left(\begin{smallmatrix} a & b \\ b & a \end{smallmatrix}\right) \in H_k^\tau \setminus \{\pm\Id\}$ and $u=\left(\begin{smallmatrix} 1 & \frac{2b(-ax-bx+b)}{2b^2x-b^2+x^2} \\ 0 & 1 \end{smallmatrix}\right)\in U_k$. For $u$ to be defined, choose $a,b$ such that $x \neq -b^2\pm ba$. 
Then by \eqref{hquss}, $\tau(hqu)=(hqu)^{-1}$, hence $hqu\in\tildeQ$. 
Furthermore, the Jordan decomposition of $hqu$, $hqu=S^{-1}JS$, $J=\left(\begin{smallmatrix} f_1(a,b,x) & 0 \\ 0 & f_2(a,b,x)\end{smallmatrix}\right),$ proves $hqu$ is semisimple. 
 
\begin{equation}\label{hquss}
\tau(hgu)=\begin{pmatrix}
\frac{2 a^2 b x^2-2 a b^2 x^2-2 a^2 b x+a b^2 x-a x^3-b^3 x+b x^3+2 a x^2+b^3-b x^2}{2 b^2 x-b^2+x^2} &-ax+bx+a \\ -\frac{2a^2bx^2-2ab^2x^2+3ab^2x-ax^3-b^3x+bx^3-ab^2+ax^2-2bx^2}{2b^2x-b^2+x^2} & ax-bx+b \end{pmatrix} =(hqu)^{-1}
\end{equation}
For $k=\mathbb F_3$, one can easily verify this result holds although $H^\tau_k=\{\pm\Id\}$.

\end{proof}

We can now simplify our main result, Theorem \ref{HQR}.

\begin{cor}
Let $G$ be defined over $k$ and $\tau=\tau_m$ an involution of $G$. Then $G_k$ decomposes as $G_k=H_k^\tau(\tildeQ)^{ss}U_k$. 
\end{cor}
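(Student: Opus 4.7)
The plan is to dispose of the statement by a case split on the square class of $m$, assembling the two ingredients that immediately precede the statement.

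First I would handle the case $m\notin (k^*)^2$. Here Corollary \ref{QQtildess} asserts that the entire extended symmetric space already consists of semisimple elements, i.e. $(\tildeQ)^{ss}=\tildeQ$. Substituting this equality into the decomposition of Theorem \ref{HQR} immediately produces
\[G_k=H_k^\tau\,\tildeQ\,U_k=H_k^\tau(\tildeQ)^{ss}U_k,\]
so this case is essentially a one-line deduction.

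Second I would handle the case $m\in (k^*)^2$. By the representative convention fixed in the remark following Corollary \ref{numberofinvs}, a representative of the square class of squares is $\tau_1$, so up to the isomorphy of involutions established earlier we may take $\tau=\tau_1$. Lemma \ref{m1semi} then asserts verbatim the desired decomposition $G_k=H_k^\tau(\tildeQ)^{ss}U_k$.

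Since every square class of $m$ falls into one of these two cases, the corollary follows. The only nontrivial content sits inside Lemma \ref{m1semi}, where the non-semisimple elements of $\tildeQ$ (which, by the proof of Theorem \ref{Qtildesemisimple}, occur precisely when $a+c=\pm 2$ and $m\in (k^*)^2$) were moved into $(\tildeQ)^{ss}$ by an explicit pair $(h,u)\in H_k^\tau\times U_k$ whose semisimplicity was confirmed via an explicit Jordan decomposition. The present corollary is therefore a clean assembly of Theorem \ref{HQR}, Corollary \ref{QQtildess}, and Lemma \ref{m1semi}, and I do not anticipate any genuine obstacle beyond keeping the case split straight.
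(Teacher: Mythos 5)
Your proposal is correct and matches the paper's own argument: the paper proves this corollary by exactly the same case split, invoking Lemma \ref{m1semi} when $m$ is a square (i.e.\ $\tau=\tau_1$) and the equality $\tildeQ=(\tildeQ)^{ss}$ from the semisimplicity results when $m$ is not a square, combined with Theorem \ref{HQR}.
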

\begin{proof}
If $m=1$, use Lemma \ref{m1semi}. If $m\neq 1$, then $\tildeQ=(\tildeQ)^{ss}$.
\end{proof}


\section{Refining the Decomposition}\label{refining}

\subsection{Pairwise intersections of $H_k^\tau$, $\tildeQ$, and $U_k$}
\hfill

We will begin refining the decomposition by determining the pairwise intersections of the fixed-point, symmetric, and extended symmetric spaces of $G$. 
For the following propositions, let $\tau=\tau_m$ be the involution of $G$, $H^\tau$ the fixed-point group, $\tildeQ$ the extended symmetric space, and $U$ the unipotent subgroup of $G$ consisting of upper triangular matrices with $1$'s on the diagonal. 

\begin{prop}

\begin{equation}\label{hqintu} H_k^\tau\tildeQ \bigcap U_k=\left\{\begin{pmatrix} 1 & \frac{2b}{a} \\ 0 & 1 \end{pmatrix} \bigg \vert \ a\in k^*, \ b\in k, \ a^2-mb^2=1\right\}\end{equation}

\end{prop}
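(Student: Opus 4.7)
The plan is to prove both containments by parameterizing elements of $H_k^\tau$ as $h = \left(\begin{smallmatrix} a & b \\ mb & a \end{smallmatrix}\right)$ with $a^2 - mb^2 = 1$, and then directly computing $h^{-1} u$ for $u = \left(\begin{smallmatrix} 1 & u_1 \\ 0 & 1 \end{smallmatrix}\right) \in U_k$. The defining shape of $\tildeQ$, in which the $(2,1)$-entry must equal $-m$ times the $(1,2)$-entry, will pin down exactly the relation $u_1 = 2b/a$.

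For the inclusion $\subseteq$, I start with $u \in H_k^\tau \tildeQ \cap U_k$ and write $u = hq$ with $h \in H_k^\tau$ and $q \in \tildeQ$. Since $\det h = a^2 - mb^2 = 1$, we have $h^{-1} = \left(\begin{smallmatrix} a & -b \\ -mb & a \end{smallmatrix}\right)$, and a direct matrix multiplication gives
\[q = h^{-1}u = \begin{pmatrix} a & au_1 - b \\ -mb & a - mbu_1 \end{pmatrix}.\]
Imposing the $\tildeQ$-condition, namely that the $(2,1)$-entry equals $-m$ times the $(1,2)$-entry, yields $-mb = -m(au_1 - b)$, i.e.\ $au_1 = 2b$. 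If $a = 0$, this would force $b = 0$, contradicting $a^2 - mb^2 = 1$; hence $a \in k^*$ and $u_1 = 2b/a$, placing $u$ in the set on the right-hand side of \eqref{hqintu}.

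For the reverse inclusion $\supseteq$, given any pair $(a, b) \in k^* \times k$ with $a^2 - mb^2 = 1$, I set $h = \left(\begin{smallmatrix} a & b \\ mb & a \end{smallmatrix}\right) \in H_k^\tau$ and verify directly that $q := h^{-1} \left(\begin{smallmatrix} 1 & 2b/a \\ 0 & 1 \end{smallmatrix}\right)$ lies in $\tildeQ$. The computation from the previous paragraph, specialized to $u_1 = 2b/a$, yields $q = \left(\begin{smallmatrix} a & b \\ -mb & (a^2 - 2mb^2)/a \end{smallmatrix}\right)$, which has the correct off-diagonal shape; the determinant condition $ac + mb^2 = 1$ reduces to $a^2 - mb^2 = 1$ and holds by hypothesis. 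Hence $u = hq \in H_k^\tau \tildeQ \cap U_k$, as desired.

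No real obstacle stands in the way of this argument: the whole proof amounts to two $2 \times 2$ matrix multiplications together with one short algebraic identity. The only subtlety worth flagging is the elimination of the $a = 0$ case in the forward direction, which is what guarantees that the denominator appearing in $2b/a$ is automatically nonzero whenever $u$ actually lies in $H_k^\tau \tildeQ$.
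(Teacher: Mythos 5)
Your proof is correct and follows essentially the same route as the paper: a direct $2\times 2$ computation equating an element of $H_k^\tau\tildeQ$ with an element of $U_k$ and solving to get $u_1=\frac{2b}{a}$, only organized as imposing the shape condition of $\tildeQ$ on $q=h^{-1}u$ rather than writing out the product $hq$ with all parameters. Your version is in fact slightly more complete, since you make explicit the exclusion of $a=0$ and verify the reverse containment, both of which the paper leaves implicit.
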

\begin{proof}

Let $X=\left(\begin{smallmatrix} ax-mby & ay+bz \\ m(bx-ay) & mby+az\end{smallmatrix}\right)\in H_k^\tau\tildeQ$ for some $a^2-mb^2=1$ and $xz+my^2=1$. Then $X\in U_k$ implies there exists $u\in U_k$ such that $X=u$.

\begin{equation}\label{hq=u}
\begin{pmatrix} ax-mby & ay+bz \\ m(bx-ay) & mby+az\end{pmatrix}=\begin{pmatrix} 1 & u_1 \\ 0 & 1\end{pmatrix}
\end{equation}
 Solving \eqref{hq=u}, we obtain $u_1=\frac{2b}{a}$.
\end{proof}

\begin{remark} In general, $H_k^\tau \tildeQ \bigcap U_k$ is not contained in $G_k$. The order of \eqref{hqintu} is equivalent to the order of $H_k^\tau$ minus the elements of $H_k^\tau$ with zeroes on the diagonal.  
\[\vert H_k^\tau \tildeQ \bigcap U_k \vert =\vert H_k^\tau \vert-\left\vert \left\{ b \in k^* \  \vert \ b=\pm \frac{1}{\sqrt{-m}}\right\}\right\vert\]

\end{remark}

\begin{prop}

\[H_k^\tau\bigcap \tildeQ= U_k\bigcap H_k^\tau= U_k\bigcap \tildeQ=\pm\Id\]
\end{prop}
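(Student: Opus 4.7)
The plan is to establish each of the three intersection equalities by direct calculation using the explicit parametrizations of $H_k^\tau$, $\tildeQ$, and $U_k$ already recorded earlier in the paper. None of the three requires real ingenuity; the main ``work'' is matching matrix entries, so I do not expect any substantive obstacle.

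For the first intersection $H_k^\tau \cap \tildeQ$, I would observe that $g$ in this set satisfies both $g = \tau(g)$ and $\tau(g) = g^{-1}$, hence $g^2 = \Id$. Working in $\SL_2(\bar k)$ with $\mathrm{char}(k)\neq 2$, Cayley--Hamilton gives $g^2 - \mathrm{tr}(g)\,g + \Id = 0$, so $\mathrm{tr}(g)\,g = 2\,\Id$. If $\mathrm{tr}(g) = 0$ then $2\,\Id = 0$, contradicting the characteristic assumption; otherwise $g$ is a scalar matrix and $\det g = 1$ forces $g = \pm \Id$. Both $\pm \Id$ obviously lie in $H_k^\tau \cap \tildeQ$, giving the claimed equality.

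For $U_k \cap H_k^\tau$, a generic element of $H_k^\tau$ has the form $\begin{pmatrix} a & b \\ mb & a \end{pmatrix}$ with $a^2 - mb^2 = 1$; matching this with a generic element $\begin{pmatrix} 1 & u_1 \\ 0 & 1 \end{pmatrix}$ of $U_k$ uses $m \in k^*$ to force $b = 0$, and then $a = 1$, which pins the element down to $\Id$. For $U_k \cap \tildeQ$, I would compute $\tau(u) = B u B^{-1}$ with $B = \left(\begin{smallmatrix} 0 & 1 \\ m & 0 \end{smallmatrix}\right)$; conjugation flips $u$ into lower-triangular form $\begin{pmatrix} 1 & 0 \\ mu_1 & 1 \end{pmatrix}$, and comparing with $u^{-1} = \begin{pmatrix} 1 & -u_1 \\ 0 & 1 \end{pmatrix}$ gives $u_1 = 0$, again leaving only $\Id$. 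Strictly speaking the two intersections involving $U_k$ contain only $\Id$ (since $-\Id \notin U_k$ in characteristic not $2$), so the stated chain of equalities should be read with that mild abuse of the symbol $\pm \Id$.
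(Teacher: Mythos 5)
Your proof is correct and is essentially the paper's approach spelled out: the paper simply declares the proposition clear from the definitions, and your entry-matching computations (plus the short Cayley--Hamilton argument for $H_k^\tau\cap\tildeQ$) are exactly that verification made explicit. Your closing remark that the two intersections involving $U_k$ contain only $\Id$ (since $-\Id\notin U_k$ in characteristic not $2$) is a fair correction of a small imprecision in the paper's statement.
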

\begin{proof}
This is clear by the definitions of $H_k^\tau, \tildeQ$, and $U_k$.

\end{proof}

\begin{prop}\label{HintQR}

\begin{equation}\label{hintqr}H_k^\tau \bigcap \tildeQ U_k=\left\{ \begin{pmatrix} a & b \\ mb & a\end{pmatrix} \bigg \vert  \ a\in k^*, \ b\in k, \ a^2-mb^2=1\right\}\end{equation}
\end{prop}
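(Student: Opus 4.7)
The plan is to verify both containments by direct entry-matching in the product $qu$. An arbitrary element of $\tildeQ U_k$ has the form
\[
qu = \begin{pmatrix} p & q_0 \\ -mq_0 & s \end{pmatrix}\begin{pmatrix} 1 & u_1 \\ 0 & 1 \end{pmatrix} = \begin{pmatrix} p & pu_1+q_0 \\ -mq_0 & -mq_0u_1 + s \end{pmatrix},
\]
with constraint $ps + mq_0^2 = 1$. Setting this equal to a prospective fixed-point element $h=\left(\begin{smallmatrix} a & b \\ mb & a \end{smallmatrix}\right) \in H_k^\tau$ gives the system $p=a$, $-mq_0 = mb$, $pu_1+q_0 = b$, $-mq_0u_1 + s = a$.

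For the containment of the right-hand side in $H_k^\tau\cap \tildeQ U_k$, I would, given $a\in k^*$ and $b\in k$ with $a^2-mb^2=1$, solve the system explicitly: take $p=a$, $q_0=-b$, $u_1 = 2b/a$ (well-defined since $a\neq 0$), and $s=(a^2-2mb^2)/a$. A short check confirms $ps+mq_0^2 = a^2 - 2mb^2 + mb^2 = a^2-mb^2 = 1$, so $q\in\tildeQ$ and $u\in U_k$, yielding the desired factorization $h=qu$.

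For the reverse containment, suppose $h=\left(\begin{smallmatrix} a & b \\ mb & a\end{smallmatrix}\right)\in H_k^\tau$ admits some factorization $h=qu$. Matching $(2,1)$-entries forces $q_0=-b$, and matching $(1,1)$-entries forces $p=a$. If $a=0$, then $a^2-mb^2=1$ forces $mb^2=-1$, in particular $b\neq 0$; but then matching $(1,2)$-entries gives $pu_1+q_0=b$, i.e.\ $-b=b$, which contradicts $b\neq 0$ (characteristic not $2$). Hence $a\in k^*$, placing $h$ in the right-hand side.

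The main obstacle is simply being careful with the $a=0$ case: one must rule out \emph{every} possible factorization, not just the one suggested by the formulas above. Fortunately the $(1,1)$- and $(2,1)$-entry equations pin down $p$ and $q_0$ uniquely from $h$, so the obstruction at $a=0$ is intrinsic and not a failure of our particular choice of $u_1$.
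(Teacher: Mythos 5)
Your proof is correct and follows essentially the same route as the paper: parametrize a general element of $\tildeQ U_k$, equate entries with $\left(\begin{smallmatrix} a & b \\ mb & a\end{smallmatrix}\right)$, and solve to get $u_1=2b/a$. Your explicit treatment of the $a=0$ case (showing no factorization can exist there, since the $(1,1)$- and $(2,1)$-entries pin down $q$ uniquely) is a welcome bit of extra care that the paper only addresses implicitly via the remark following the proposition.
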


\begin{proof}
Let $X=\left(\begin{smallmatrix} x & u_1x + y \\ -my & -myu_1+z\end{smallmatrix}\right)\in \tildeQ U_k$, for some $xz+my^2=1$.
Then $X\in H_k^\tau$ implies there exists $h\in H_k^\tau$ such that $X=h$.

\begin{equation}\label{qu=h}\begin{pmatrix} x & u_1 x + y \\ -my & -myu_1+z\end{pmatrix}=\begin{pmatrix} a & b \\ mb & a \end{pmatrix}\end{equation}

Solving \eqref{qu=h} we obtain $x=a$, $y=-b$ and $u_1=\frac{2b}{a}$.

\end{proof}
\begin{remark}

The size of \eqref{hintqr} is the order of $H_k^\tau$ minus the elements in $H_k^\tau$ with zeroes on the diagonal. Furthermore, the size of \eqref{hintqr} is equivalent to the size of \eqref{hqintu}.

 \[\vert(H_k^\tau\bigcap \tildeQ U_k) \vert= \vert H_k^\tau \tildeQ \bigcap U_k \vert =\left\vert H_k^\tau\right\vert-\left\vert\left\{b\in k^* \ \vert \ b=\pm \frac{1}{\sqrt{-m}}\right\}\right\vert\] 

This intersection is almost equivalent to $H_k^\tau$.
\[H_k^\tau\setminus(H_k^\tau\bigcap \tildeQ U_k)=\left\{\begin{pmatrix} 0 & b \\ mb & 0 \end{pmatrix} \bigg \vert \ b\in k, \ -mb^2=1\right\}\]

\end{remark}
 
 \begin{lem}\label{HinQU}
 Let $G$ be defined over $k$ and $\tau=\tau_m$ the involution of $G$. The fixed-point group of $\tau_m$ is contained in $\tildeQ U_k$ if and only if $-m \not \in (k^*)^2$.
 \end{lem}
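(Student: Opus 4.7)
The plan is to read this as an almost immediate corollary of Proposition~\ref{HintQR} and the remark following it. That remark already pins down exactly which elements of $H_k^\tau$ fail to lie in $\tildeQ U_k$, namely the off-diagonal elements
\[
H_k^\tau\setminus(H_k^\tau\cap\tildeQ U_k)=\left\{\begin{pmatrix} 0 & b \\ mb & 0\end{pmatrix}\ \bigg\vert\ b\in k,\ -mb^2=1\right\}.
\]
So the containment $H_k^\tau\subset \tildeQ U_k$ is equivalent to this exceptional set being empty.

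The key step is therefore to translate the solvability of $-mb^2=1$ in $k$ into a square-class condition on $-m$. If such a $b\in k^*$ exists, then $b^2=-1/m\in(k^*)^2$, and multiplying by $m^2\in(k^*)^2$ shows that $-m=m^2\cdot(-1/m)$ is itself a square in $k^*$. Conversely, if $-m=c^2$ for some $c\in k^*$, then $b=c^{-1}\cdot(\pm 1)/\ldots$—more carefully, one takes $b=1/c$ and checks $-mb^2=-(-c^2)/c^2\cdot(-1)= \ldots$; the cleanest way is to note that $-m\in(k^*)^2$ if and only if $-m^{-1}\in(k^*)^2$ (the two differ by the square $m^2$), and then $b^2=-m^{-1}$ has a solution in $k^*$ iff $-m^{-1}$ is a square.

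Putting these together, the exceptional set is nonempty iff $-m\in(k^*)^2$, and therefore
\[
H_k^\tau\subset\tildeQ U_k \quad\Longleftrightarrow\quad H_k^\tau\setminus(H_k^\tau\cap\tildeQ U_k)=\varnothing \quad\Longleftrightarrow\quad -m\notin(k^*)^2,
\]
which is the claim. There is no real obstacle here; the only subtlety is the routine square-class bookkeeping that $-m$ and $-m^{-1}$ represent the same class in $k^*/(k^*)^2$, so one must be careful to state the equivalence in terms of $-m$ rather than $-1/m$.
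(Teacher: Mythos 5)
Your proposal is correct and follows essentially the same route as the paper: the paper's proof likewise reduces the containment $H_k^\tau\subset\tildeQ U_k$ to the emptiness of the exceptional set $H_k^\tau\setminus(H_k^\tau\cap\tildeQ U_k)=\{\left(\begin{smallmatrix}0 & b\\ mb & 0\end{smallmatrix}\right)\mid -mb^2=1\}$ identified after Proposition~\ref{HintQR}, and notes that $-mb^2=1$ is solvable in $k$ exactly when $-m\in(k^*)^2$ (the paper phrases this as $b=\pm 1/\sqrt{-m}\notin k$). Your explicit square-class bookkeeping that $-m$ and $-m^{-1}$ lie in the same class is just a cleaner write-up of that final step, despite the slightly garbled intermediate computation in your converse direction.
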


\begin{proof}
This proof follows from the following chain of equivalent statements.
\begin{align*}
H_k^\tau & \subset \tildeQ U_k \\
H_k^\tau\setminus(H_k^\tau\bigcap \tildeQ U_k)&=\left\{\left(\begin{smallmatrix} 0 & b \\ mb & 0 \end{smallmatrix}\right)  \big\vert \ b\in k,\ -mb^2=1\right\} =\emptyset\\
b&=\pm \frac{1}{\sqrt{-m}}  \not\in k
\end{align*}
\end{proof}

\begin{ex}When $H_k^\tau\subset \tildeQ U_k$, we do not necessarily have $G_k=\tildeQ U_k$. Let $G$ be defined over $k=\mathbb R$ and $\tau=\tau_1$ the involution of $G$.
By Lemma \ref{HinQU}, $H_k\subset \tildeQ U_k$. Let $g=\left(\begin{smallmatrix} 0 & \frac{1}{2} \\ -2 & 0 \end{smallmatrix}\right)\in G_k$ then $gu\not\in \tildeQ$
for any $u=\left(\begin{smallmatrix} 1 & u_1 \\ 0 & 1 \end{smallmatrix}\right)\in U_k.$
 Therefore $G_k\not= \tildeQ U_k$.
\[\tau(gu)=\begin{pmatrix} -2u_1 & -2 \\ \frac{1}{2} & 0 \end{pmatrix} \not =\begin{pmatrix} -2u_1 & -\frac{1}{2} \\ 2 & 0 \end{pmatrix}=(gu)^{-1}\]
\end{ex}

\begin{prop}\label{HRintQ}

\begin{equation}\label{hrintq}H_k^\tau U_k\bigcap \tildeQ=\left\{\begin{pmatrix} x & y \\ -my & z \end{pmatrix} \bigg\vert \ x\in k^*, \ y,z\in k, \ xz+my^2=1\right\}\end{equation}
\end{prop}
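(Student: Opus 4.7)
The plan is to mirror the proof of Proposition~\ref{HintQR}, which handles the analogous intersection $H_k^\tau \cap \tildeQ U_k$. First I would take an arbitrary element of $H_k^\tau U_k$ and write it as a product $X = hu$ with $h = \left(\begin{smallmatrix} a & b \\ mb & a\end{smallmatrix}\right) \in H_k^\tau$ (so $a^2 - mb^2 = 1$) and $u = \left(\begin{smallmatrix} 1 & u_1 \\ 0 & 1\end{smallmatrix}\right) \in U_k$. Carrying out the multiplication gives
\[
X = \begin{pmatrix} a & au_1+b \\ mb & mbu_1+a\end{pmatrix},
\]
and demanding $X \in \tildeQ$ forces the $(2,1)$-entry to equal $-m$ times the $(1,2)$-entry, which reduces to the linear relation $au_1 = -2b$.

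The case $a = 0$ can be ruled out immediately: it would force $b = 0$ from $au_1 = -2b$, contradicting $a^2 - mb^2 = 1$. With $a \neq 0$ we may solve $u_1 = -2b/a$, and substitution puts $X$ in the form
\[
X = \begin{pmatrix} a & -b \\ mb & (a^2 - 2mb^2)/a \end{pmatrix}.
\]
Relabeling $x = a$, $y = -b$, and $z = (x^2 - 2my^2)/x$ displays $X$ as an element of the right-hand side of \eqref{hrintq}, with $x \in k^*$; a short algebraic check yields $xz + my^2 = a^2 - mb^2 = 1$, so the forward inclusion is complete.

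For the reverse inclusion, given $X = \left(\begin{smallmatrix} x & y \\ -my & z\end{smallmatrix}\right)$ with $x \in k^*$ and $xz + my^2 = 1$, I would reverse-engineer the parametrization by setting $a = x$, $b = -y$, and $u_1 = 2y/x$, then verify directly that $h \in H_k^\tau$, $u \in U_k$, and $hu = X$. The main obstacle is reconciling the two quadratic constraints under this parametrization: the relation $a^2 - mb^2 = 1$ required for $h \in H_k^\tau$ and the relation $xz + my^2 = 1$ defining $\tildeQ$ must be shown consistent on the reconstructed $h$ and $u$, which forces the specific value of $z$ already appearing in the forward calculation. Once this compatibility is recorded, the two inclusions together give the claimed equality.
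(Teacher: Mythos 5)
Your forward inclusion is sound, and it is essentially the computation the paper itself records: since $U_k$ is upper unitriangular, $hu$ has the same first column as $h$, the condition for $hu\in\tildeQ$ reduces to $au_1=-2b$, the case $a=0$ is impossible, and the resulting matrix has the displayed shape with $xz+my^2=1$. The gap is in the reverse inclusion, and it cannot be closed: the ``compatibility'' you defer to the end is an extra condition, not something that can be arranged. For $X=\left(\begin{smallmatrix} x & y \\ -my & z\end{smallmatrix}\right)$ with $x\in k^*$ and $xz+my^2=1$, the entry $z$ is already determined by $x$ and $y$ (namely $z=(1-my^2)x^{-1}$), so there is nothing left to ``force''; and the reconstructed $h=\left(\begin{smallmatrix} x & -y \\ -my & x\end{smallmatrix}\right)$ lies in $H_k^\tau$ only when $x^2-my^2=1$, which does not follow from $xz+my^2=1$. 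Concretely, $X=\left(\begin{smallmatrix} 1 & 1 \\ -m & 1-m\end{smallmatrix}\right)$ belongs to the right-hand side of \eqref{hrintq} for every $m\in k^*$, yet it is not in $H_k^\tau U_k$: any product $hu$ shares its first column with $h$, forcing $a=1$, $b=-1$, and then $a^2-mb^2=1-m\neq 1$. Likewise $\left(\begin{smallmatrix} x & 0 \\ 0 & x^{-1}\end{smallmatrix}\right)$ with $x^2\neq 1$ lies in the right-hand side but not in $H_k^\tau U_k$.

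What your forward computation actually establishes is
\[
H_k^\tau U_k\bigcap \tildeQ=\left\{\begin{pmatrix} x & y \\ -my & z\end{pmatrix} \bigg\vert \ x\in k^*, \ x^2-my^2=1, \ xz+my^2=1\right\},
\]
equivalently the set of $q\in\tildeQ$ with $q_{11}\in k^*$ and $q_{11}^2-mq_{12}^2=1$ (so $z=(x^2-2my^2)/x$). Without the constraint $x^2-my^2=1$ the right-hand side of \eqref{hrintq} is simply $\{q\in\tildeQ \ \vert \ q_{11}\neq 0\}$, which is strictly larger. Note that the paper's own proof is the same one-directional computation and silently drops the requirement $a^2-mb^2=1$, so the printed equality, and the statements built on it (Remark \ref{huintqex}, Lemma \ref{QinHR}, Theorem \ref{HR}; for $k=\mathbb R$, $m=-1$ the latter would assert $\SL_2(\mathbb R)=H^{\tau_{-1}}_{\mathbb R}U_{\mathbb R}$, impossible since the right side is the image of a two-dimensional set), cannot be taken at face value. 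So the obstacle you flagged in your final paragraph is not a step to be finessed: it is precisely where the claimed equality fails, and your write-up should either add the missing constraint to the right-hand side or stop after the forward inclusion.
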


\begin{proof}
Let $X=\left(\begin{smallmatrix} a & u_1 a + b \\ mb & mbu_1+a\end{smallmatrix}\right)\in H_k^\tau U_k$, for some $a^2-mb^2=1$. 
Then $X\in \tildeQ$ implies there exists $q\in \tildeQ$ such that $q=X$.

\begin{equation}\label{hu=q}
\begin{pmatrix} a & u_1 a + b \\ mb & mbu_1+a \end{pmatrix}=\begin{pmatrix} x & y \\-my & z \end{pmatrix}
\end{equation}

Solving \eqref{hu=q}, we obtain $a=x$, $b=-y,$ and $u_1=\frac{2y}{x}$.

\end{proof}
\begin{remark}\label{huintqex}
Similar to the previous example, \eqref{hrintq} is almost equivalent to the extended symmetric space. 

\[\tildeQ \setminus (H_k^\tau U_k \bigcap \tildeQ)=\left\{\begin{pmatrix} 0 & y \\ -my & z \end{pmatrix} \bigg \vert \ y\in k, \ my^2=1\right\}\]

\end{remark}

\begin{lem}\label{QinHR}

Let $G$ be defined over $k$ and $\tau=\tau_m$ the involution of $G$. Then the extended symmetric space is contained in $H_k^\tau U_k$ if and only if $m\not\in (k^*)^2$.

\end{lem}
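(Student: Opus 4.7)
The plan is to deduce this directly from Proposition \ref{HRintQ} (together with its accompanying remark) by the same kind of equivalence chain used in the proof of Lemma \ref{HinQU}. The containment $\tildeQ \subset H_k^\tau U_k$ is equivalent to $\tildeQ = H_k^\tau U_k \cap \tildeQ$, so the question reduces to determining when the set $\tildeQ \setminus (H_k^\tau U_k \cap \tildeQ)$ is empty.

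First I would invoke Remark \ref{huintqex}, which describes this complement explicitly as
\[
\tildeQ \setminus (H_k^\tau U_k \cap \tildeQ) = \left\{ \begin{pmatrix} 0 & y \\ -my & z \end{pmatrix} \bigg\vert \ y \in k,\ my^2 = 1 \right\}.
\]
Thus the complement is empty if and only if the equation $my^2 = 1$ has no solution $y \in k$. Since $m \in k^*$, this equation is solvable precisely when $m^{-1} \in (k^*)^2$, which, because $(k^*)^2$ is a subgroup of $k^*$, is equivalent to $m \in (k^*)^2$.

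Putting the equivalences together gives
\[
\tildeQ \subset H_k^\tau U_k \iff \tildeQ \setminus (H_k^\tau U_k \cap \tildeQ) = \emptyset \iff m \notin (k^*)^2,
\]
which is the desired statement. No serious obstacle is expected: the hardest ingredient, namely the explicit description of the intersection and its complement in $\tildeQ$, has already been established in Proposition \ref{HRintQ} and Remark \ref{huintqex}, so the proof amounts to assembling these facts into a single biconditional.
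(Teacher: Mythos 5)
Your proposal is correct and follows essentially the same route as the paper: both reduce the containment $\tildeQ \subset H_k^\tau U_k$ to the emptiness of the complement $\tildeQ \setminus (H_k^\tau U_k \bigcap \tildeQ)$ described in Remark \ref{huintqex}, and then observe that this set is empty precisely when $my^2=1$ has no solution in $k$, i.e.\ when $m\not\in (k^*)^2$. Your added remark that solvability of $my^2=1$ amounts to $m^{-1}\in (k^*)^2$, hence to $m\in (k^*)^2$ since squares form a subgroup, just makes explicit a step the paper writes as $y=\pm\tfrac{1}{\sqrt{m}}\not\in k$.
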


\begin{proof}
This proof follows from the following chain of equivalent statements.
\begin{align*}
\tildeQ & \subset H_k^\tau U_k \\
\tildeQ \setminus (H_k^\tau U_k \bigcap \tildeQ)&=\left\{\begin{pmatrix} 0 & y \\ -my & z \end{pmatrix} \bigg \vert \ y\in k, \ my^2=1\right\} =\emptyset\\
y&=\pm \frac{1}{\sqrt{m}}  \not\in k
\end{align*}
\end{proof}

\subsection{Generalization of the Iwasawa Decomposition}
\hfill

\begin{thrm}\label{HR}
Let $G$ be defined over $k$ and $\tau =\tau_m$ the involution of $G$. If $m\not \in (k^*)^2$, then $G_k=H_k^\tau U_k$.
\end{thrm}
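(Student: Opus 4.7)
The plan is to derive this as an immediate corollary of the two main tools already established in the paper: the general decomposition $G_k = H_k^\tau \tildeQ U_k$ from Theorem \ref{HQR}, and the containment $\tildeQ \subset H_k^\tau U_k$ from Lemma \ref{QinHR}. Since Lemma \ref{QinHR} gives this containment precisely under the hypothesis $m \not\in (k^*)^2$, the present hypothesis is exactly what is needed to feed that lemma into Theorem \ref{HQR}.

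First I would note the trivial containment $H_k^\tau U_k \subset G_k$ since both $H_k^\tau$ and $U_k$ lie in $G_k$. For the reverse containment, I would start from $G_k = H_k^\tau \tildeQ U_k$ via Theorem \ref{HQR}, then substitute the inclusion $\tildeQ \subset H_k^\tau U_k$ guaranteed by Lemma \ref{QinHR} under the assumption $m \not\in (k^*)^2$, yielding
\[G_k = H_k^\tau \tildeQ U_k \subset H_k^\tau (H_k^\tau U_k) U_k = (H_k^\tau H_k^\tau)(U_k U_k) = H_k^\tau U_k,\]
where the final equality uses that $H_k^\tau$ and $U_k$ are both subgroups of $G_k$ and are therefore closed under multiplication.

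There is essentially no obstacle here; the real work was done in establishing Theorem \ref{HQR} and Lemma \ref{QinHR}. The only mild point worth stating explicitly in the write-up is that $H_k^\tau H_k^\tau U_k U_k$ collapses to $H_k^\tau U_k$ because of the group structure of the two factors, so that the general three-factor decomposition degenerates to a genuine two-factor (Iwasawa-type) decomposition exactly when the hypothesis on the square class of $m$ is in force.
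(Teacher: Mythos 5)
Your proposal is correct and matches the paper's own argument: the paper likewise writes $g=hqu$ via Theorem \ref{HQR}, substitutes $q=h_1u_1$ using Lemma \ref{QinHR} under the hypothesis $m\not\in(k^*)^2$, and absorbs the factors using the group structure of $H_k^\tau$ and $U_k$. Your set-level phrasing is just a restatement of that same element-wise computation, so there is nothing to add.
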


\begin{proof}
Let $g\in G_k$ and $\tau=\tau_m$ the involution of $G$ with $m\not\in (k^*)^2$. By Theorem \ref{HQR} write $g=hqu$ for some $h\in H_k^\tau, q\in \tildeQ$ and $u\in U_k$. 
By Lemma \ref{QinHR}, write $q=h_1u_1$ for some $h_1\in H_k^\tau$ and $u_1\in U_k$. 
Thus, $g=hh_1u_1u\in H_k^\tau U_k$. 
The reverse containment in clear. 
\end{proof}

\begin{thrm}\label{HwR}

Let $G$ be defined over $k$ and $\tau=\tau_1$ the involution of $G$. Then $\displaystyle G_k=\bigcup_{\omega\in W(T)} H_k^\tau\omega U_k$, where $W(T)$ is the Weyl group of the maximal $k$-split torus. 
\end{thrm}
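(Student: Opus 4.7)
The plan is to leverage Theorem~\ref{HQR} together with the Bruhat decomposition~\eqref{bruhat}. Since Theorem~\ref{HQR} already supplies $G_k = H_k^{\tau_1}\tildeQ U_k$, and $W(T) = \{\Id,\omega_0\}$ with $\omega_0 = \left(\begin{smallmatrix} 0 & 1 \\ -1 & 0\end{smallmatrix}\right)$, it suffices to show $\tildeQ \subset H_k^{\tau_1} U_k \cup H_k^{\tau_1}\omega_0 U_k$. Once this inclusion is in hand, the theorem follows by absorbing the outer $H_k^{\tau_1}$ factor on the left and the outer $U_k$ factor on the right of the factorization from Theorem~\ref{HQR}.

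Writing $q = \left(\begin{smallmatrix} a & b \\ -b & c\end{smallmatrix}\right) \in \tildeQ$ (so $ac+b^2 = 1$), I would stratify by the $(1,1)$-entry $a$. When $a = 0$, the determinant condition forces $b = \pm 1$, and a direct factorization of the form $q = (\pm\Id)\cdot\omega_0\cdot\left(\begin{smallmatrix} 1 & \mp c \\ 0 & 1 \end{smallmatrix}\right)$ places $q$ in $H_k^{\tau_1}\omega_0 U_k$, since $\pm\Id \in H_k^{\tau_1}$; this mirrors the $B_k\omega_0 B_k$ case handled in the proof of Theorem~\ref{HQR}. When $a \neq 0$, I would seek $h = \left(\begin{smallmatrix} a_1 & b_1 \\ b_1 & a_1\end{smallmatrix}\right) \in H_k^{\tau_1}$ with $a_1^2 - b_1^2 = 1$ such that $h^{-1} q$ is upper-triangular with unit diagonal. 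Requiring the $(2,1)$-entry of $h^{-1}q$ to vanish yields $b_1 = -a_1 b/a$, and imposing $a_1^2 - b_1^2 = 1$ reduces to $a_1^2(a^2 - b^2) = a^2$; whenever this admits a solution in $k$, we obtain $q \in H_k^{\tau_1} U_k$. Otherwise, the parallel attempt $q = h\omega_0 u$ produces the complementary system, which should place $q$ into $H_k^{\tau_1}\omega_0 U_k$ under the opposite arithmetic condition.

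The main obstacle I anticipate is closing the second case in a field-agnostic way. Because $\tau_1$ corresponds to $m = 1 \in (k^*)^2$, Theorem~\ref{HR} does not apply and the appearance of $\omega_0$ is genuinely necessary: not every $q$ falls into $H_k^{\tau_1}U_k$ alone. The delicate point is to verify that the two targets $H_k^{\tau_1}U_k$ and $H_k^{\tau_1}\omega_0 U_k$ jointly exhaust $\tildeQ$, which will likely require a careful case split based on the square-class behavior of quantities such as $a^2 - b^2$ in $k^*$, together with a short separate argument covering the degenerate case $k \simeq \mathbb{F}_3$ where $H_k^{\tau_1} = \{\pm\Id\}$ (as in the closing remark of the proof of Theorem~\ref{HQR}).
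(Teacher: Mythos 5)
Your opening reduction (via Theorem~\ref{HQR}) and your treatment of the $a=0$ case coincide with the paper's proof, but the step you flag as the ``delicate point'' is a genuine gap, and it cannot be closed, because the claimed exhaustion is false. Push your own computation one step further: right multiplication by $U_k$ does not change the first column, so every element of $H_k^{\tau_1}U_k$ has first column $\left(\begin{smallmatrix} a_1 \\ b_1\end{smallmatrix}\right)$ with $a_1^2-b_1^2=1$, and every element of $H_k^{\tau_1}\omega_0U_k$ has first column $\left(\begin{smallmatrix} -b_1 \\ -a_1\end{smallmatrix}\right)$; conversely, normalizing the first column recovers the factorization. Hence $g\in\bigcup_{\omega\in W(T)}H_k^{\tau_1}\omega U_k$ if and only if the first column $\left(\begin{smallmatrix} x \\ y \end{smallmatrix}\right)$ of $g$ satisfies $x^2-y^2=\pm 1$. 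In particular, for $q=\left(\begin{smallmatrix} a & b \\ -b & c\end{smallmatrix}\right)\in\tildeQ$ one has $q\in H_k^{\tau_1}U_k$ iff $a^2-b^2=1$ and $q\in H_k^{\tau_1}\omega_0U_k$ iff $b^2-a^2=1$. (Here note a slip in your second case: solvability of $a_1^2(a^2-b^2)=a^2$, i.e.\ $a^2-b^2\in(k^*)^2$, only makes $h^{-1}q$ upper triangular with diagonal entries $\pm\sqrt{a^2-b^2}$ and its inverse, so it places $q$ in $H_k^{\tau_1}T_kU_k$; the unit-diagonal requirement forces exactly $a^2-b^2=1$.) These two loci do not come close to exhausting $\tildeQ$: the element $q_0=\left(\begin{smallmatrix} 2 & 0 \\ 0 & 1/2 \end{smallmatrix}\right)\in\tildeQ$ has $a^2-b^2=4$, and by the first-column criterion $q_0\notin\bigcup_{\omega\in W(T)}H_k^{\tau_1}\omega U_k$ over $k=\mathbb Q$, $\mathbb R$, or $\mathbb Q_p$. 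Over a finite field the count $2\,\vert H_k^{\tau_1}\vert\,\vert U_k\vert=2q(q-1)<q(q^2-1)=\vert G_k\vert$ gives the same conclusion. So no case split on the square class of $a^2-b^2$ can rescue the argument; the statement fails as written.

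It is worth seeing how the paper's own proof ``closes'' the step you could not: it cites Remark~\ref{huintqex}, i.e.\ Proposition~\ref{HRintQ}, which asserts that $H_k^\tau U_k\cap\tildeQ$ is all of $\tildeQ$ with nonzero $(1,1)$-entry. Your computation shows why that assertion is incorrect: in \eqref{hu=q} the matching of entries forces $a=x$, $b=-y$, $u_1=2y/x$, but the defining constraint $a^2-mb^2=1$ of $H_k^\tau$ is never verified, and it imposes the extra equation $x^2-my^2=1$ on the intersection (for $\tau_1$, exactly your condition $a^2-b^2=1$). The same defect propagates to Lemma~\ref{QinHR} and Theorem~\ref{HR}. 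So your instinct that the joint exhaustion of $\tildeQ$ by $H_k^{\tau_1}U_k$ and $H_k^{\tau_1}\omega_0U_k$ is the crux was correct; the honest conclusion is that these two sets do not suffice, and a further factor --- the split torus $T_k$, or equivalently the extended symmetric space as in Theorem~\ref{HQR} --- must be retained in any correct decomposition.
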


\begin{proof}
Let $W(T)$ be as in \eqref{bruhat} and $g\in G_k$. 
If $g\in H_k^\tau U_k \bigcap \tildeQ$, then $g\in H_k^\tau U_k$. If $g\not\in H_k^\tau U_k \bigcap \tildeQ$, write $g=hqu$ as in Theorem \ref{HQR}, where $q \in \tildeQ \setminus (H_k^\tau U_k \bigcap \tildeQ)$.
By Remark \ref{huintqex}, let $q=\left(\begin{smallmatrix} 0 & 1 \\ -1 & z \end{smallmatrix}\right)$ without loss of generality and $u_1=\left(\begin{smallmatrix} 1 & -z \\ 0 & 1 \end{smallmatrix}\right)$, then $g\in H_k^\tau \omega U_k$, $\omega\in W(T)$.

\[g=hqu=h\begin{pmatrix} 0 & 1 \\ -1 & 0 \end{pmatrix} u_1 u \in H_k^\tau \begin{pmatrix} 0 & 1 \\ -1 & 0 \end{pmatrix}U_k\]

\end{proof}

\subsection{Commutativity of the factors of Theorem \ref{HQR} }
\hfill

For the involution $\tau$ of $G$ in the following lemmas, let $H^\tau$, $\tildeQ$, and $U$ be the fixed-point group, the extended symmetric space, and the unipotent subgroup, respectively.
\begin{lem}\label{HQ=QH}
Let $G$ be defined over $k$ and $\tau$ an involution of $G$. Then $H_k^\tau\tildeQ=\tildeQ H_k^\tau$.

\end{lem}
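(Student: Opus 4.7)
The plan is to prove the two containments $H_k^\tau \tildeQ \subseteq \tildeQ H_k^\tau$ and $\tildeQ H_k^\tau \subseteq H_k^\tau \tildeQ$ by observing that $\tildeQ$ is stable under conjugation by $H_k^\tau$. The key computation is an immediate consequence of the defining properties: for $h \in H_k^\tau$ (so $\tau(h)=h$) and $q \in \tildeQ$ (so $\tau(q)=q^{-1}$), one has
\[
\tau(hqh^{-1}) = \tau(h)\tau(q)\tau(h)^{-1} = h\, q^{-1}\, h^{-1} = (hqh^{-1})^{-1},
\]
so $hqh^{-1} \in \tildeQ$, and the analogous computation shows $h^{-1}qh \in \tildeQ$ as well.

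Given this, the forward containment follows from the identity $hq = (hqh^{-1})\, h$, which exhibits $hq$ as a product of an element of $\tildeQ$ with an element of $H_k^\tau$. The reverse containment follows symmetrically from $qh = h\,(h^{-1}qh)$. Both containments together yield the desired equality $H_k^\tau \tildeQ = \tildeQ H_k^\tau$.

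There is essentially no obstacle here; the only thing to be careful about is correctly applying $\tau$ to a product and using both defining properties ($\tau$ fixes $h$ and inverts $q$) simultaneously. I would present the argument as a single short chain of equalities followed by the two rewriting identities, without case analysis on the involution $\tau_m$ or on the field $k$, since the argument is purely formal and uses only that $\tau$ is a group involution and that $H^\tau$ and $\tildeQ$ are defined as fixed-point and $(-1)$-eigen subsets of $\tau$ respectively.
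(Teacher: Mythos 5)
Your proof is correct and follows essentially the same route as the paper: conjugation by elements of $H_k^\tau$ preserves $\tildeQ$, and then the identities $hq=(hqh^{-1})h$ and $qh=h(h^{-1}qh)$ give both containments. The only cosmetic difference is that you verify the conjugation-stability of $\tildeQ$ directly, whereas the paper invokes its Lemma \ref{hahinQ} (the same one-line computation, stated there for split tori), so your version is if anything slightly more self-contained.
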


\begin{proof}
Let $g\in \tildeQ H_k^\tau$, then $g=q_1h_1$ for some $q_1\in\tildeQ$ and $h_1\in H_k^\tau$. 
Using Lemma \ref{hahinQ}, $g\in H_k^\tau \tildeQ$.
\[ g=q_1h_1=h_1(h_1^{-1}q_1h_1) \in H_k^\tau\tildeQ\]

Similarly, let $g=h_1q_1\in H_k^\tau\tildeQ$, then $g\in \tildeQ H_k^\tau$.
\[
g=h_1q_1=(h_1q_1h_1^{-1})h_1 \in \tildeQ H_k^\tau
\]

\end{proof}

\begin{remark} In general, $U_kH_k^\tau \not = H_k^\tau U_k$ and $U_k\tildeQ\not= \tildeQ U_k$. \end{remark}

\begin{lem}\label{UHQ}
Let $G$ be defined over $k$ and $\tau$ an involution of $G$, 
then $G_k=U_kH_k^\tau\tildeQ$.
and $G_k=\tildeQ U_kH_k^\tau$.
\end{lem}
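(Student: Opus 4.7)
The plan is to derive both decompositions from Theorem \ref{HQR} together with Lemma \ref{HQ=QH}. The first equality $G_k = U_k H_k^\tau \tildeQ$ follows from a short manipulation, while the second equality $G_k = \tildeQ U_k H_k^\tau$ requires a direct Bruhat-based construction in the spirit of the proof of Theorem \ref{HQR}.

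For the first equality, I exploit that $\tildeQ$ is closed under inversion (if $g \in \tildeQ$ then $\tau(g^{-1}) = \tau(g)^{-1} = g$) and that $H_k^\tau$ and $U_k$ are subgroups. Inverting the decomposition of Theorem \ref{HQR} and using $G_k = G_k^{-1}$ gives $G_k = U_k \tildeQ H_k^\tau$, and then applying Lemma \ref{HQ=QH} to the two rightmost factors yields $G_k = U_k H_k^\tau \tildeQ$.

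For the second equality, I take $g \in G_k$ and produce $q \in \tildeQ$, $u \in U_k$, $h \in H_k^\tau$ with $g = quh$ by mimicking the Bruhat-decomposition argument used to prove Theorem \ref{HQR}. If $g \in B_k$, then $g = tu$ with $t \in T_k \subseteq \tildeQ$ (Lemma \ref{taustable}) and $u \in U_k$, so $g = qu \cdot \Id$. For $g \in B_k W B_k$ I case-split on whether the top-left entry $g_{11}$ is zero. When $g_{11} \neq 0$, I solve for $q$ and $u$ with $g = qu$ directly: the entries of $q$ and $u$ are forced by matching, and the defining relation $q_{11}q_{22} + mq_{12}^2 = 1$ of $\tildeQ$ collapses to $\det g = 1$ (analogously to the calculation in \eqref{guinQ}). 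When $g_{11} = 0$, I choose $h = \left(\begin{smallmatrix} p & r \\ mr & p \end{smallmatrix}\right) \in H_k^\tau$ with $r \neq 0$; then $(gh^{-1})_{11} = -mr\, g_{12}$ is nonzero, so $gh^{-1} \in \tildeQ U_k$ by the previous subcase and $g \in \tildeQ U_k H_k^\tau$.

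The main obstacle is the degenerate case $H_k^\tau = \{\pm \Id\}$, in which no $h$ with $r \neq 0$ exists. By Lemma \ref{F3} and its following remark this occurs only when $k \cong \mathbb{F}_3$ with $m \in (\mathbb{F}_3^*)^2 = \{1\}$; here every $g_{21} \in \mathbb{F}_3^*$ satisfies $g_{21}^2 = 1 = m$, so one can directly take $q \in \tildeQ$ with $q_{11} = 0$ and $q_{21} = g_{21}$, writing $g = qu$ outright and picking $h = \Id$.
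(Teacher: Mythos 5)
Your proof is correct, and it takes a partly different route from the paper. The paper's own treatment is only a sketch: it says the lemma follows ``by the same technique as Theorem \ref{HQR}'' and leaves to the reader the explicit Bruhat-cell computations showing $h_1u_1g\in\tildeQ$ and $gh_2u_2\in\tildeQ$ for suitable $h_i\in H_k^\tau$, $u_i\in U_k$ --- i.e.\ a direct matrix construction for \emph{both} decompositions. You instead observe that $H_k^\tau$, $U_k$, and $\tildeQ$ are each closed under inversion, so inverting $G_k=H_k^\tau\tildeQ U_k$ gives $G_k=U_k\tildeQ H_k^\tau$ for free, and Lemma \ref{HQ=QH} converts this to $G_k=U_kH_k^\tau\tildeQ$; this formal argument is a genuine shortcut the paper does not exploit, and it avoids any new computation. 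You correctly recognize that the same trick cannot produce $G_k=\tildeQ U_kH_k^\tau$ (its inverse is $H_k^\tau U_k\tildeQ$, which is not among the formally derivable orderings since $U_k$ commutes with neither $H_k^\tau$ nor $\tildeQ$), and your direct construction there is exactly the computation the paper intends, organized more cleanly: for $g_{11}\neq 0$ the single parameter $u_1=\frac{g_{21}+mg_{12}}{mg_{11}}$ forces $gu^{-1}$ into the shape $\left(\begin{smallmatrix} a & b \\ -mb & c\end{smallmatrix}\right)$, and membership in $\tildeQ$ then reduces to $\det=1$; for $g_{11}=0$ multiplying by $h\in H_k^\tau$ with nonzero off-diagonal entry (which exists unless $H_k^\tau=\{\pm\Id\}$) moves you into the first case; and you handle the degenerate $k\simeq\mathbb F_3$, $m\in(k^*)^2$ situation separately, just as the proof of Theorem \ref{HQR} must. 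In short, your write-up is both complete where the paper is terse and slightly more economical on the half that admits a formal derivation.
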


The proof of Lemmas \ref{UHQ} follows the same technique as the proof of Theorem \ref{HQR}, using the Bruhat decomposition. It is left to the reader to show that for all $g\in G_k$, there exists $u_1,u_2\in U_k$ and $h_1,h_2\in H_k^\tau$ such that $h_1u_1g \in \tildeQ$ and $gh_2u_2 \in \tildeQ$.

\begin{cor}

Let $G$ be defined over $k$ and $\tau$ an involution of $G$. The following decompositions of $G_k$ are equivalent:
\begin{enumerate}[label=\roman*.]
\begin{multicols}{3}
\item $G_k=H_k^\tau\tildeQ U_k$
\item $G_k=H_k^\tau U_k \tildeQ$
\item $G_k=\tildeQ H_k^\tau U_k$
\item $G_k=\tildeQ U_k H_k^\tau$
\item $G_k=U_kH_k^\tau \tildeQ$
\item $G_k=U_k\tildeQ H_k^\tau$
\end{multicols}
\end{enumerate}

\end{cor}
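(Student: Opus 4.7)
The plan is to observe that, between Theorem \ref{HQR} and Lemma \ref{UHQ}, three of the six decompositions---namely (i), (iv), and (v)---are already established. It remains to derive decompositions (ii), (iii), and (vi) from these three together with Lemma \ref{HQ=QH}, using the commutativity $H_k^\tau \tildeQ = \tildeQ H_k^\tau$ and an inversion trick.

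First I would handle (iii) and (vi), both of which fall out directly from Lemma \ref{HQ=QH}. Applying the identity $H_k^\tau \tildeQ = \tildeQ H_k^\tau$ to (i) gives $G_k = H_k^\tau \tildeQ U_k = (\tildeQ H_k^\tau) U_k = \tildeQ H_k^\tau U_k$, which is (iii). Likewise, applying the same identity to (v) gives $G_k = U_k H_k^\tau \tildeQ = U_k(\tildeQ H_k^\tau) = U_k \tildeQ H_k^\tau$, which is (vi).

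For (ii), I would use an inversion argument. The subsets $H_k^\tau$ and $U_k$ are closed under inversion by virtue of being subgroups, and $\tildeQ$ is closed under inversion as well: if $\tau(g) = g^{-1}$, then $\tau(g^{-1}) = \tau(g)^{-1} = g = (g^{-1})^{-1}$, so $g^{-1} \in \tildeQ$. Consequently, for any triple product of the form $XYZ$ with $X, Y, Z \in \{H_k^\tau, \tildeQ, U_k\}$, one has $(XYZ)^{-1} = Z^{-1} Y^{-1} X^{-1} = ZYX$ as subsets of $G_k$. Applying this to the equality $G_k = \tildeQ U_k H_k^\tau$ from Lemma \ref{UHQ}, and using $G_k = G_k^{-1}$, yields $G_k = (\tildeQ U_k H_k^\tau)^{-1} = H_k^\tau U_k \tildeQ$, which is (ii). The reverse containments in all three cases are immediate since each factor lies in $G_k$.

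The argument is essentially formal once Theorem \ref{HQR}, Lemma \ref{UHQ}, and Lemma \ref{HQ=QH} are in hand, so I do not anticipate a serious obstacle. The only point requiring any care is the verification that $\tildeQ$ is closed under inversion, which reduces to a one-line calculation using that $\tau$ is a homomorphism of order two.
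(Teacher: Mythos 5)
Your proof is correct and follows essentially the same route as the paper, which simply observes that the corollary combines Theorem \ref{HQR}, Lemma \ref{UHQ}, and Lemma \ref{HQ=QH}. Your inversion argument (using that $H_k^\tau$, $U_k$, and $\tildeQ$ are each closed under inversion) neatly supplies the one case---decomposition (ii)---that the commutation identity $H_k^\tau\tildeQ=\tildeQ H_k^\tau$ alone does not reach, a detail the paper leaves implicit.
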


This corollary combines Lemmas \ref{HQ=QH}, \ref{UHQ}, and Theorem \ref{HQR}.


\section{Summary of Results over Fields of Characteristic $2$}\label{char2}
\begin{notation}
In this section,
let $k$ be a field of characteristic $2$. From \cite{nathaniel}, we have results concerning the isomorphy classes of involutions of $\Aut(G,G_k)$ for a field $k$ with characteristic $2$. 
\end{notation}
\begin{note}
In a field $k$ with characteristic $2$, we have $x=-x$ for all $x$ in $k$.
\end{note}

\begin{thrm}
If $k$ is a finite field or algebraically closed, then there is one isomorphism class of involutions of $\Aut(G,G_k)$. 
\end{thrm}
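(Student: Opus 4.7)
The plan is to mirror the argument used for Corollary~\ref{numberofinvs} but with the characteristic-$2$ classification results of \cite{nathaniel} in place of the characteristic-not-$2$ ones cited just before it. First I would invoke the characteristic-$2$ analog of the parametrization of involutions from \cite{nathaniel}: the isomorphy classes of $\tau \in \Aut(G,G_k)$ are indexed by the square classes of $k$ (or, more generally, by an invariant built from the image of the squaring map on $k^*$). This reduces the theorem to showing that $(k^*)^2 = k^*$, so that $|k^*/(k^*)^2| = 1$.

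Next I would check this in the two claimed cases. If $k$ is algebraically closed, then for every $a \in k^*$ the polynomial $X^2 - a$ splits in $k$, so $a$ is a square. If $k$ is finite of characteristic $2$, the Frobenius $F : k \to k$, $x \mapsto x^2$, is a field endomorphism and is therefore injective; being an injective self-map of a finite set, $F$ is a bijection of $k$, so once more every element of $k^*$ is a square. In both situations $(k^*)^2 = k^*$, and the parametrizing set collapses to a single element, giving exactly one isomorphy class of involutions.

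The main obstacle I anticipate is not the square computation itself, which is routine from basic field theory, but verifying that the characteristic-$2$ classification of \cite{nathaniel} really does factor through $k^*/(k^*)^2$ in a form that can be imported verbatim. In characteristic $2$ new phenomena can intervene, for instance additive Artin--Schreier classes via the map $x \mapsto x^2 + x$, and one must ensure that either these do not enter the parametrization or that they, too, are trivialized when $k$ is finite or algebraically closed. In the finite case the surjectivity of any additive polynomial like $\wp(x) = x^2 + x$ on a finite-dimensional $\mathbb{F}_2$-vector space follows from the same injective-implies-surjective argument (after taking kernels into account), while in the algebraically closed case any such polynomial equation has a root by definition; so any auxiliary invariant of this type is also trivial, and the reduction to a single class goes through.
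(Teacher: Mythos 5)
The paper does not actually prove this theorem: it is imported wholesale from \cite{nathaniel}, so there is no internal argument to compare yours against. Judged on its own terms, your reduction has the right shape but rests on a premise you have not secured and that is not what the paper states. You assume the characteristic-$2$ classification is ``indexed by the square classes of $k$,'' and then correctly observe that $(k^*)^2=k^*$ for a finite field of characteristic $2$ (Frobenius is an injective, hence bijective, self-map) and for an algebraically closed field. The field-theoretic half is fine, and your hedge about Artin--Schreier classes is also handled correctly. But the parametrization itself is exactly the content being cited, and taken literally it sits awkwardly with the paper's companion theorem that \emph{every} infinite non-algebraically-closed field of characteristic $2$ has infinitely many classes (a perfect infinite field such as the perfect closure of $\mathbb F_2(t)$ has trivial square classes), and with the fact that in characteristic $2$ the single surviving class is represented by the \emph{unipotent} element $\tau_0=\Inn\left(\begin{smallmatrix}1&1\\0&1\end{smallmatrix}\right)$ rather than by an antidiagonal $\tau_m$. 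So you cannot simply import the characteristic-not-$2$ statement ``verbatim,'' as you yourself suspected.

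The good news is that the gap can be closed elementarily, making your argument self-contained where the paper only cites. Every element of $\Aut(G,G_k)$ is inner, so an involution is $\Inn(A)$ with $A\in\GL_2(k)$ and $A^2$ scalar, say $A^2=\lambda\Id$; in characteristic $2$ the characteristic polynomial of $A$ is $x^2+\lambda$. If every element of $k$ is a square (your two cases), write $\lambda=c^2$, so the characteristic polynomial is $(x+c)^2$ with $c\in k$; since $A$ is not scalar (else $\Inn(A)=\Id$), its Jordan form over $k$ is $c\left(\begin{smallmatrix}1&1\\0&1\end{smallmatrix}\right)$, i.e.\ $A$ is $\GL_2(k)$-conjugate, up to the scalar $c$, to $N=\left(\begin{smallmatrix}1&1\\0&1\end{smallmatrix}\right)$. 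Hence $\Inn(A)\simeq\Inn(N)=\tau_0$ over $k$, and there is exactly one isomorphy class, with the representative the paper uses. Phrased this way your square-computation does all the work, without invoking an unverified square-class parametrization.
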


\begin{notation} For $k=\mathbb F_2^r$ or $k=\bar k$, we will represent this isomorphy class of involutions by $\tau_0=\Inn(N),$ where $N=\left(\begin{smallmatrix}1 & 1 \\ 0 & 1 \end{smallmatrix}\right)$.
\end{notation}

\begin{remark} The fixed-point group of $\tau_0$ is the unipotent subgroup $H_k^{\tau_0}=\left\{\left(\begin{smallmatrix} a & b \\ 0 & a \end{smallmatrix}\right) \big \vert a^2=1 \right\}$. The extended symmetric space of $\tau_0$ is $\widetilde{Q^{\tau_0}}=\left\{\left(\begin{smallmatrix} x & y \\ z & x+z \end{smallmatrix}\right) \big \vert x^2+xz+yz=1\right\}$.
\end{remark}

Because the fixed-point group of $\tau_N$ is unipotent, we will not need to include the unipotent subgroup $U$ from Theorem \ref{HQR}. 
\begin{thrm}
Let $G$ be defined over an algebraically closed field or finite field $k$ and $\tau=\tau_0$ the involution of $G$. For the fixed-point group $H^\tau_k$, extended symmetric space $\tildeQ$, and Weyl group of the maximal $k$-split torus $W(T)$, we can factor the group as $G_k=\displaystyle \bigcup_{\omega\in W(T)} H_k^\tau \omega \tildeQ$.
\end{thrm}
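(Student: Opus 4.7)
The plan is to mimic the Bruhat-decomposition argument used to prove Theorem~\ref{HQR}. Two preliminary observations simplify the setup: in characteristic $2$, the condition $a^2=1$ collapses to $a=1$, so $H_k^{\tau_0}$ is exactly the unipotent subgroup $U_k=\{\left(\begin{smallmatrix} 1 & y \\ 0 & 1\end{smallmatrix}\right) : y\in k\}$; and since $-1=1$, the nontrivial Weyl element is $w=\left(\begin{smallmatrix} 0 & 1 \\ 1 & 0\end{smallmatrix}\right)$. The Bruhat decomposition then reads $G_k=B_k\cup B_kwB_k$, so it suffices to show $B_k\subset H_k^\tau\tildeQ\cup H_k^\tau w\tildeQ$ and $B_kwB_k\subset H_k^\tau\tildeQ$.

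For the small cell, I would write $g=\left(\begin{smallmatrix} a & \alpha \\ 0 & a^{-1}\end{smallmatrix}\right)\in B_k$. Since left-multiplication by $h^{-1}$ with $h\in H_k^\tau$ preserves upper-triangularity, the condition $h^{-1}g\in\tildeQ$---which, reading off the defining shape of $\tildeQ$, requires the bottom-right entry to equal the top-left plus the bottom-left entry---forces $a^{-1}=a$, i.e., $a=1$. When $a=1$, one checks directly that $g\in\tildeQ$ (the defining equation $x^2+xz+yz=1$ reduces to $1=1$). When $a\neq 1$, I would instead write $g=hwq$ and verify that the requirement $q:=wh^{-1}g\in\tildeQ$ reduces to a single linear equation in the free entry $y$ of $h=\left(\begin{smallmatrix} 1 & y \\ 0 & 1\end{smallmatrix}\right)$, with leading coefficient $a^{-1}$, a unit. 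Hence $y$ can be solved for, giving $g\in H_k^\tau w\tildeQ$.

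For the big cell, a generic element has the form $g=\left(\begin{smallmatrix} a & \alpha \\ 0 & a^{-1}\end{smallmatrix}\right)w\left(\begin{smallmatrix} b & \beta \\ 0 & b^{-1}\end{smallmatrix}\right)$. Expanding the product and imposing $h^{-1}g\in\tildeQ$ for $h=\left(\begin{smallmatrix} 1 & y \\ 0 & 1\end{smallmatrix}\right)$ again reduces, via the same ``bottom-right $=$ top-left $+$ bottom-left'' condition, to a single linear equation in $y$, this time with coefficient $a^{-1}b$---again a unit. Solving yields $g\in H_k^\tau\tildeQ$. The reverse inclusion $H_k^\tau\tildeQ\cup H_k^\tau w\tildeQ\subset G_k$ is immediate.

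The main obstacle is the bookkeeping under the characteristic-$2$ identity $-y=y$, which collapses several signs that appear in the membership equations for $\tildeQ$ relative to the generic-characteristic proof of Theorem~\ref{HQR}; one must be careful to re-derive the correct form of ``$\tau_0(g)=g^{-1}$'' in terms of matrix entries before setting up the linear equations for $y$. Once that is done, however, each resulting equation has leading coefficient a monomial in the torus entries $a,b$---all units---so the equations are never degenerate, and the required $h\in H_k^\tau$ exists in every case.
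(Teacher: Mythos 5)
Your proposal is correct and follows essentially the same route as the paper's proof: the case split by Bruhat cell is exactly the paper's split on whether the lower-left entry vanishes, and solving the single linear equation for the entry $y$ of $h$ (with unit coefficient $a^{-1}$ or $a^{-1}b$) reproduces the explicit $h$ and $q$ the paper writes down, using the same membership criterion for $\tildeQ$ (bottom-right entry equals top-left plus bottom-left, with $\det=1$ automatic). The only cosmetic difference is your extra subcase $a=1$ in the small cell, which the paper's uniform use of the Weyl element $\omega$ renders unnecessary.
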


\begin{proof}
Let $W(T)$ be as in \eqref{bruhat}. For $g=\left(\begin{smallmatrix} a & b \\ c & d \end{smallmatrix}\right)\in G_k$ with $c\in k^*$, 
let $h=\left(\begin{smallmatrix} 1 & \frac{a+c+d}{c} \\ 0 & 1 \end{smallmatrix}\right)\in H_k^\tau$ and $q=\left(\begin{smallmatrix} c+d & \frac{1+cd+d^2}{c} \\ c & d   \end{smallmatrix}\right) \in \tildeQ$, then $g=hq\in H_k^\tau\tildeQ$.
For $g=\left(\begin{smallmatrix} a & b \\ 0 & \frac{1}{a} \end{smallmatrix}\right)\in G_k$, let $h=\left(\begin{smallmatrix} 1 & a^2 + ab \\ 0 & 1 \end{smallmatrix}\right) \in H_k^\tau$ and $q=\left(\begin{smallmatrix} 0 & \frac{1}{a} \\ a & a \end{smallmatrix}\right) \in \tildeQ$, then $g=h\omega q\in H_k^\tau \omega \tildeQ$, where $\omega=\left(\begin{smallmatrix} 0 & 1 \\ 1 & 0 \end{smallmatrix}\right)$.  
\end{proof}

\begin{thrm}
If $k$ is an infinite field which is not algebraically closed, then there is an infinite number of isomorphy classes of involutions of $\Aut(G,G_k)$. 

\end{thrm}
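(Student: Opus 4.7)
The plan is to mimic the classification of involutions by square classes of $k^*$ which held in characteristic not $2$. For each $m \in k^*$, let $B_m = \left(\begin{smallmatrix} 0 & 1 \\ m & 0 \end{smallmatrix}\right)$ and set $\tau_m = \Inn(B_m)$. In characteristic $2$ one computes $B_m^2 = m\Id$, which is central in $\GL_2(\bar k)$, so $\tau_m^2 = \Id$ and $\tau_m$ is an involution; since $B_m \in \GL_2(k)$, the automorphism $\tau_m$ preserves $G_k$, placing $\tau_m \in \Aut(G,G_k)$. This produces an explicit family of involutions indexed by $k^*$.

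Next I would show that $\tau_m \simeq \tau_n$ over $k$ if and only if $m \equiv n$ in $k^*/(k^*)^2$. Granting the characteristic-$2$ analogue of the preliminary lemma (every element of $\Aut(G,G_k)$ is isomorphic over $k$ to some $\Inn(C)$ with $C \in \GL_2(k)$), the isomorphy $\tau_m \simeq \tau_n$ is equivalent to the matrix identity $CB_mC^{-1} = \lambda B_n$ for some $C \in \GL_2(k)$ and $\lambda \in k^*$: the scalar $\lambda$ appears because $\Inn$ has kernel the center $\bar k^*\Id$, and $\lambda\Id = (CB_mC^{-1})B_n^{-1} \in \GL_2(k)$ forces $\lambda \in k^*$. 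Squaring this identity yields $m\Id = \lambda^2 n\Id$, so $m = \lambda^2 n$. Conversely, given $m = \lambda^2 n$, the matrix $C = \mathrm{diag}(\lambda,1) \in \GL_2(k)$ satisfies $CB_mC^{-1} = \lambda B_n$, realizing the isomorphy. This sets up an injection $k^*/(k^*)^2 \hookrightarrow \{\text{isomorphy classes of involutions of } \Aut(G,G_k)\}$.

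The theorem thus reduces to establishing $|k^*/(k^*)^2| = \infty$ whenever $k$ is an infinite field of characteristic $2$ that is not algebraically closed. When $k$ is not perfect, the argument is concrete: fix $t \in k \setminus k^2$ (which exists by non-perfectness) and, using a $t$-adic valuation on a function-field subring of $k$ --- or, equivalently, the structure of purely inseparable quadratic extensions --- exhibit an infinite sequence of elements lying in distinct square classes (for example, the distinct monic irreducible polynomials in $\mathbb F_2[t]$ all represent inequivalent classes). The principal obstacle is the perfect, non-algebraically-closed case: here $k^* = (k^*)^2$ and the family $\{\tau_m\}$ collapses to a single class, so the square-class parameter alone is insufficient. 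In that regime one must appeal to the full classification in \cite{nathaniel}, which supplements the square-class data with additional invariants --- plausibly the Artin-Schreier classes $k/\wp(k)$ with $\wp(x) = x^2 + x$, or a related Galois-theoretic quantity tied to the separable closure of $k$ --- and verify that this supplementary invariant also remains infinite precisely when $k$ is infinite and not algebraically closed. Reconciling these two sources of involutions and proving infinitude of the resulting parameter space in the perfect case is the main technical step.
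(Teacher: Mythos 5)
First, note that the paper does not actually prove this statement: it is imported wholesale from \cite{nathaniel} (``From \cite{nathaniel}, we have results concerning the isomorphy classes of involutions\dots''), so there is no internal argument to compare yours against; your proposal has to stand on its own. Its first two steps do stand: in characteristic $2$ the maps $\tau_m=\Inn(B_m)$ are involutions in $\Aut(G,G_k)$, and, granting the characteristic-$2$ analogue of Lemma 2.2, your computation $CB_mC^{-1}=\lambda B_n \Rightarrow m=\lambda^2 n$ together with $C=\mathrm{diag}(\lambda,1)$ correctly shows $\tau_m\simeq\tau_n$ if and only if $m$ and $n$ lie in the same square class; these are exactly the representatives the paper itself uses for infinite non-algebraically-closed $k$. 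In the imperfect case your conclusion is right but the specific argument is not: distinct irreducibles of $\mathbb F_2[t]$ need not stay in distinct square classes of a larger imperfect field. The clean statement is that $(k^*)^2=(k^2)^*$, so square classes of $k$ are exactly the $k^2$-lines in $k$ viewed as a $k^2$-vector space; choosing $t\in k\setminus k^2$, the elements $1+\mu t$ with $\mu\in k^2$ lie in pairwise distinct classes, and $k^2$ is infinite because $k$ is. That repairs the imperfect case.

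The genuine gap is the one you flag yourself: the perfect, non-algebraically-closed case (e.g.\ an infinite algebraic extension of $\mathbb F_2$ other than $\overline{\mathbb F_2}$, or the perfect closure of $\mathbb F_2(t)$), which you defer to the very reference the paper is quoting, so the theorem is not proved. Worse, your hope that a supplementary Artin--Schreier-type invariant will restore infinitude there is inconsistent with your own reduction. In characteristic $2$, if $\Inn(A)$ has order $2$ with $A$ non-scalar, then Cayley--Hamilton forces $\mathrm{tr}(A)=0$, so $A$ has characteristic polynomial $x^2+\det A$ equal to its minimal polynomial, and rational canonical form makes $A$ conjugate over $\GL_2(k)$ to $B_{\det A}$. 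Hence, once every element of $\Aut(G,G_k)$ is inner by some $C\in\GL_2(k)$, the square class of $\det A$ is a \emph{complete} invariant of the isomorphy class, exactly as in your step two; there is no further parameter inside $\Aut(G,G_k)$ to appeal to. Consequently a perfect field has exactly one class of involutions, and ``infinite and not algebraically closed'' alone cannot yield infinitely many classes unless $k$ is also imperfect (or unless \cite{nathaniel} is counting automorphisms in a larger sense than the algebraic, $G_k$-preserving ones used in this paper). So the main technical step you postpone is not merely unfinished --- it cannot be completed along the lines you sketch, and the correct resolution is to identify the number of classes with $\vert k^*/(k^*)^2\vert$ and to settle (or restrict) the perfect case explicitly.
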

\begin{notation} For an infinite field $k$ such that $k\neq \bar k$, we will represent the isomorphy classes of involutions by $\tau_m=\Inn(M),$ where $M=\left(\begin{smallmatrix}0 & 1 \\ m & 0 \end{smallmatrix}\right)$.
\end{notation}

\begin{remark}
The fixed-point group and extended symmetric space of $G$ defined over $k$ with the involution $\tau_m$ are as follows:
\[H_k^\tau =\left\{\begin{pmatrix} a & b \\ mb & a \end{pmatrix} \bigg \vert \ a,b\in k, \ a^2+mb^2=1\right\} \quad \tildeQ=\left\{\begin{pmatrix}a& b \\ mb & c\end{pmatrix}\bigg \vert \ a,b,c\in k, \ ac+mb^2=1\right\}\]
\end{remark}

\begin{thrm}
Let $G$ be defined over a field $k$ with characteristic $2$ and $\tau=\tau_m$ the involution of $G$. For the fixed-point group $H^\tau$, extended symmetric space $\tildeQ$, and the unipotent subgroup $U_k=\left\{ \left(\begin{smallmatrix} 1 & u_1 \\ 0 & 1 \end{smallmatrix}\right) \big \vert \ u \in k \right\}$, $G_k=H_k^\tau \tildeQ U_k$.
\end{thrm}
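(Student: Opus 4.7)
The plan is to follow the template of Theorem~\ref{HQR} very closely. First I would re-run Lemma~\ref{taustable} verbatim to confirm that the diagonal torus $T$ is $\tau_m$-stable in characteristic~$2$: the computation $M t M^{-1} = t^{-1}$ for a diagonal $t$ does not rely on the characteristic. This immediately gives $B_k = T_k U_k \subset \tildeQ U_k$, disposing of the identity double coset in the Bruhat decomposition $G_k = B_k \sqcup B_k w B_k$, where in characteristic~$2$ the Weyl representative is $w = \left(\begin{smallmatrix} 0 & 1 \\ 1 & 0 \end{smallmatrix}\right)$ (since $-1 = 1$). The reverse containment $H_k^\tau \tildeQ U_k \subset G_k$ is clear, so only the forward containment on the big cell remains.

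For the big Bruhat cell I would parametrize $g \in B_k w B_k$ exactly as in~\eqref{bwb} and split into the subcases $\alpha \ne 0$ and $\alpha = 0$. In the first subcase, I would solve $\tau(gu) = (gu)^{-1}$ for the entry $u_1$ of $u \in U_k$; the defining condition of $\tildeQ$ becomes $ac + mb^2 = 1$ with anti-diagonal entries $mb$ (rather than $-mb$), and the solution is the characteristic-$2$ analog of the formula immediately preceding~\eqref{guinQ}. In the second subcase I would choose an auxiliary $h \in H_k^\tau \setminus \{\Id\}$ (in characteristic $2$, $\pm \Id$ collapses to $\Id$) and a matching $u \in U_k$ so that $hgu \in \tildeQ$, paralleling~\eqref{hgu}. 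The linear algebra is the same as in the odd-characteristic argument; only the sign conventions need adjusting.

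The main obstacle is to check that the explicit formulas for $u$ and $h$ remain well-defined over a characteristic-$2$ field. The denominators appearing in the analogs of~\eqref{guinQ} and~\eqref{hgu} are monomials in $a, b, m, \alpha, \beta, a_1, b_1$, none of which carries a factor of $2$, so nothing collapses; the subcase split on whether $\alpha = 0$ guarantees that the factors actually used in each formula are nonzero. A secondary point is the existence of a nontrivial $h \in H_k^\tau$ in the $\alpha = 0$ subcase: since $H^{\tau_m}$ is cut out by $a^2 + mb^2 = 1$, it is a one-dimensional algebraic group and hence has plenty of $k$-points over any infinite $k$ of characteristic $2$. For the sporadic fields where $H_k^\tau$ happens to be trivial, one dispatches the few remaining elements of $G_k$ by direct inspection, mirroring the ``one can easily verify'' clause at the end of the proof of Theorem~\ref{HQR}.
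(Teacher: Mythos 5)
Your reduction of the big cell to the subcases $\alpha\neq 0$ and $\alpha=0$ matches the paper's strategy (the paper's own proof splits on whether the lower-left entry of $g$ vanishes instead of quoting Bruhat, but the computations are the same), and the $\alpha\neq 0$ subcase does go through in characteristic $2$: solving $\tau_m(gu)=(gu)^{-1}$ for $u_1$ only requires the upper-left entry of $g$ to be nonzero. The genuine gap is in the $\alpha=0$ subcase. Your claim that $H^{\tau_m}$, being one-dimensional, ``has plenty of $k$-points over any infinite $k$ of characteristic $2$'' is false: in characteristic $2$ the defining equation $a^2+mb^2=1$ reads $(a+\sqrt{m}\,b)^2=1$ over $\bar k$, so if $m\notin (k^*)^2$ then any solution with $b\neq 0$ would give $m=\bigl((1+a)/b\bigr)^2\in(k^*)^2$; hence $H^{\tau_m}_k=\{\Id\}$ for every field $k$ of characteristic $2$, infinite or not. (The fixed-point scheme is non-reduced here; its reduced subgroup is a form of the additive group not defined over $k$, so dimension counting does not produce rational points.) Consequently there is no auxiliary $h$ available, and the elements you propose to ``dispatch by direct inspection'' are not a sporadic finite set but the entire family of $g$ with vanishing upper-left entry.

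Moreover this cannot be repaired by cleverer formulas, because for $m\notin(k^*)^2$ the statement itself fails: every element of $\tildeQ U_k$ has the form $\left(\begin{smallmatrix} p & pu_1+q \\ mq & mqu_1+s\end{smallmatrix}\right)$ with $ps+mq^2=1$, and if its upper-left entry $p$ is $0$ then $mq^2=1$, forcing $m\in(k^*)^2$. Since $H^{\tau_m}_k=\{\Id\}$ in this situation, the element $\left(\begin{smallmatrix}0&1\\1&0\end{smallmatrix}\right)\in G_k$ (take, e.g., $k=\mathbb{F}_2(t)$ and $m=t$) lies outside $H^{\tau_m}_k\tildeQ U_k$. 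So your sketch can only be completed when $m\in(k^*)^2$ --- and you should be aware that the paper's own proof carries the same hidden restriction, since its auxiliary element $h=\left(\begin{smallmatrix}0&b\\ mb&0\end{smallmatrix}\right)$ belongs to $H^{\tau_m}_k$ only when $mb^2=1$ is solvable, i.e.\ when $m$ is a square (in which case $\tau_m$ is isomorphic to the unipotent-type involution $\tau_0$). A correct treatment of non-square $m$ in characteristic $2$ must either restrict the theorem or modify the factors of the decomposition.
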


\begin{proof}

Let $g=\left(\begin{smallmatrix} x& y \\ z & w \end{smallmatrix}\right)\in G_k$. 
If $z\neq 0$, for $u=\left(\begin{smallmatrix} 1 & \frac{x+w}{z} \\ 0 & 1 \end{smallmatrix}\right)\in U_k$ and $h=\left(\begin{smallmatrix} 0 & b \\ m b & 0 \end{smallmatrix}\right)\in H^\tau_k$, we have $\tau(hgu)=(hgu)^{-1}$. Hence, $hgu\in\tildeQ$ and $g\in H_k^\tau \tildeQ U_k$.

If $z\neq 0$, for $u=\left(\begin{smallmatrix} 1 & \frac{y}{x} \\ 0 & 1\end{smallmatrix}\right)\in U_k$, we have $\tau(gu)=(gu)^{-1}$. Hence, $gu\in \tildeQ$ and $g\in H_k^\tau \tildeQ U_k$. 
The reverse containment is clear. 
\end{proof}

\bibliography{Mandi}
\bibliographystyle{amsalpha}

\end{document}